\title{Decomposition of Borel graphs and cohomology}
\author{Hiroki Ishikura}
\date{}
\numberwithin{equation}{section}
\newsavebox{\@brx}
\newcommand{\llangle}[1][]{\savebox{\@brx}{\(\m@th{#1\langle}\)}%
  \mathopen{\copy\@brx\kern-0.5\wd\@brx\usebox{\@brx}}}
\newcommand{\rrangle}[1][]{\savebox{\@brx}{\(\m@th{#1\rangle}\)}%
  \mathclose{\copy\@brx\kern-0.5\wd\@brx\usebox{\@brx}}}
\theoremstyle{plain}
\newtheorem{thm}{Theorem}[section]
\newtheorem{prop}[thm]{Proposition}
\newtheorem{lem}[thm]{Lemma}
\newtheorem{cor}[thm]{Corollary}
\newtheorem{thmA}{Theorem}
\theoremstyle{definition}
\newtheorem{dfn}[thm]{Definition}
\newtheorem{nta}[thm]{Notation}
\theoremstyle{remark}
\newtheorem{rmk}[thm]{Remark}
\newtheorem{exm}[thm]{Example}
\begin{document}

\begin{abstract}
We give a cohomological criterion for certain decomposition of Borel graphs, which is an analog of Dunwoody's work on accessibility of groups. As an application, we prove that a Borel graph $(X,G)$ with uniformly bounded degrees of cohomological dimension one is Lipschitz equivalent to a Borel acyclic graph on $X$. This gives a new proof of a result of Chen-Poulin-Tao-Tserunyan on Borel graphs with components quasi-isometric to trees.
\end{abstract}

\maketitle

\section{Introduction} \label{sintr}

\subsection{Decomposition of Borel graphs}

\subsubsection*{Accessibility of groups}

The Stallings theorem for ends of groups \cite{Sta,Ber} states that a finitely generated group with more than one ends decomposes into either an amalgamated free product or an HNN extension over a finite subgroup. Dunwoody's accessibility is the notion that requires this decomposition process to finish in finite steps. 

\begin{dfn}
A group is \textit{accessible} if it admits a co-compact action on a tree such that all vertex stabilizers are finitely generated groups with at most one end, and all edge stabilizers are finite groups.
\end{dfn}

There is a cohomological characterization of accessibility. All rings in this paper are assumed to be unital.

\begin{thm}[{\cite[Theorem 5.5]{Dun1}}]\label{intr4}
Let $R$ be a non-zero commutative ring. Then a finitely generated group $\Gamma$ is accessible if and only if the cohomology group $\textup{H}^1(\Gamma,R\Gamma)$ is finitely generated as a right $R\Gamma$-module.
\end{thm}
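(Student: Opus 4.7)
The plan is to handle the two implications separately, since each uses different machinery. I would state both directions explicitly: $(\Rightarrow)$ uses Bass--Serre theory to convert the tree action into a cohomological computation, while $(\Leftarrow)$ requires Dunwoody-type combinatorial arguments to build the tree action from cohomological data.

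For the easier direction $(\Rightarrow)$, suppose $\Gamma$ acts cocompactly on a tree $T$ with finitely generated one-ended vertex stabilizers $\Gamma_v$ and finite edge stabilizers $\Gamma_e$. The equivariant cellular chain complex of $T$ with $R\Gamma$ coefficients is a short exact sequence
\[
0 \to \bigoplus_{e \in E(T)/\Gamma} R\Gamma \otimes_{R\Gamma_e} R \to \bigoplus_{v \in V(T)/\Gamma} R\Gamma \otimes_{R\Gamma_v} R \to R \to 0,
\]
finite in each degree by cocompactness. Applying $\mathrm{Hom}_{R\Gamma}(-, R\Gamma)$ and using Shapiro's lemma yields a Mayer--Vietoris long exact sequence whose first-degree portion expresses $\mathrm{H}^1(\Gamma, R\Gamma)$ as an extension of terms of the form $\mathrm{H}^i(\Gamma_v, R\Gamma_v)$ and $\mathrm{H}^i(\Gamma_e, R\Gamma_e)$ for $i = 0, 1$. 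Since $\Gamma_e$ is finite the contribution from edges is a finitely generated $R\Gamma$-module, and since each $\Gamma_v$ has at most one end, $\mathrm{H}^1(\Gamma_v, R\Gamma_v)$ is either zero or a module over a one-ended group that behaves well (and for one-ended finitely generated vertex groups the relevant terms are controlled). Putting this together one concludes $\mathrm{H}^1(\Gamma, R\Gamma)$ is finitely generated.

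For the harder direction $(\Leftarrow)$, I would proceed by iterating Stallings' theorem and using finite generation to bound the number of iterations. Concretely: if $\Gamma$ has more than one end, apply Stallings to split $\Gamma$ as a graph of groups with finite edge groups; then recursively split the vertex groups that still have more than one end. The termination requires a bound. The idea is to interpret a finite generating set of $\mathrm{H}^1(\Gamma, R\Gamma)$ as a finite family of ``almost invariant'' subsets of $\Gamma$ (equivalently, Dunwoody tracks in a Cayley $2$-complex) whose equivalence classes under symmetric difference generate the module. Each Stallings splitting corresponds to a nested $\Gamma$-invariant family of tracks, and finite generation forces any such nested family to have finitely many orbits up to equivalence, which caps the number of splittings.

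The main obstacle is precisely this last step: extracting a numerical bound on the depth of iterated splittings from the abstract hypothesis of finite generation. Naively, each splitting uses only one new almost invariant set, but after passing to a vertex group one must check that the cohomology of the vertex group is still controlled by the original generating set of $\mathrm{H}^1(\Gamma, R\Gamma)$ -- otherwise the process could generate new cohomology classes at every stage. Overcoming this requires the technology of tracks or patterns on a $\Gamma$-complex, where the number of disjoint essential $\Gamma$-orbits of tracks is bounded in terms of the minimum number of generators of $\mathrm{H}^1(\Gamma, R\Gamma)$; I would model that step closely on Dunwoody's original argument, since a purely homological approach does not transparently yield the needed combinatorial finiteness.
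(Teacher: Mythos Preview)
The paper does not prove this theorem. Theorem~\ref{intr4} is stated in the introduction as a classical background result of Dunwoody, with an explicit citation to \cite[Theorem 5.5]{Dun1}, and no proof is supplied anywhere in the paper. The paper's contribution is the \emph{analog} for Borel graphs (Theorem~\ref{mos3}), whose proof occupies Section~\ref{smos1} and uses completely different machinery (Borel treesets, the structure-tree construction of Section~\ref{slot}, and the module $Z_G$ of Section~\ref{sbk3}). So there is no paper proof to compare your proposal against.

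That said, your sketch is a reasonable outline of how Dunwoody's original argument goes, and it is worth noting where it is soft. In the forward direction you write that for one-ended vertex groups ``the relevant terms are controlled''; the precise fact you need is that a finitely generated group $\Gamma_v$ has at most one end if and only if $\mathrm{H}^1(\Gamma_v, R\Gamma_v)=0$, so those terms actually vanish, and the finite edge groups contribute finitely generated modules because they are finite. In the backward direction you correctly locate the real content: bounding the depth of iterated Stallings splittings. Your final paragraph is honest that this requires Dunwoody's track/pattern technology and is not a purely homological step; that is exactly right, and it is the heart of \cite{Dun1}. If you were writing this up in full you would need to make explicit the passage from a finite generating set of $\mathrm{H}^1(\Gamma,R\Gamma)$ to a finite set of almost-invariant subsets, and then invoke the nested-set/thin-track argument to produce a $\Gamma$-tree with finitely many edge orbits.
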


\subsubsection*{Borel graphs.} 
Let $X$ be a standard Borel space. A \textit{Borel equivalence relation} on $X$ is an equivalence relation on $X$ which is a Borel subset of $X\times X$. It is called a \textit{countable Borel equivalence relation} if each equivalence class is at most countable. They have been studied in the context of ergodic theory and descriptive set theory. A Borel graph on $X$ is a simplicial graph on $X$ whose edge set is a Borel subset of $X\times X$. If it is locally countable, then its connected relation defines a countable Borel equivalence relation on $X$. This is a generalization of the orbit equivalence relation associated with a Borel action of a countable group on $X$. One of the main theme of this area is to see how combinatorial or geometric structures of Borel graphs are reflected in the equivalence relations that they generate. Recently, there have been many attempts to apply ideas of geometric group theory to the study of Borel graphs (e.g., \cite{CJM}, \cite{CGMT}).

Tserunyan establishes an analog of the Stallings theorem for Borel graphs \cite[Theorem 4.1]{Tse} (see also \cite[Th\'{e}or\`{e}me D]{Ghy} and \cite[Theorem C]{Pau} for other analogs). It is proved that if a locally finite Borel graph $G$ has components with more than one ends, then the Borel equivalence relation generated by $G$ is a free product of two subequivalence relations, one of which is \textit{treeable}, i.e., generated by a Borel acyclic graph.

By developing this idea, we obtain an analog of Theorem \ref{intr4} for Borel graphs.

\begin{thmA}[Theorem \ref{mos3}] \label{intr1}
Let $R$ be a non-zero commutative ring and $(X,G)$ a Borel graph with uniformly bounded degrees. Suppose that $\textup{H}^1(G,R_G)$ is finitely generated as a right $R_G$-module. Then there exists an injective Borel quasi-isometry $\gamma:(X,G)\to (Y,G')$, where $(Y,G')$ is a Borel graph with uniformly bounded degrees such that:
\begin{enumerate}
    \item $G'=T\ast H$ with $T$ and $H$ Borel subgraphs of $G'$.
    \item $T$ is acyclic.
    \item $H$ is uniformly at most one-ended.
\end{enumerate}
\end{thmA}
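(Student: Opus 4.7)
The plan is to mimic Dunwoody's accessibility argument \cite{Dun1}, using Tserunyan's Borel Stallings-type splitting theorem \cite{Tse} in place of the classical Stallings theorem, and taking the finite generation of $\textup{H}^1(G,R_G)$ as the Borel counterpart of Dunwoody's finiteness hypothesis on $\textup{H}^1(\Gamma,R\Gamma)$. The first task is to set up a dictionary between cocycles and cuts. In the classical setting, classes in $\textup{H}^1(\Gamma,R\Gamma)$ correspond to almost invariant subsets of $\Gamma$. Analogously, I would interpret a class in $\textup{H}^1(G,R_G)$ as an almost invariant Borel set, i.e.\ a Borel subset $A \subseteq X$ whose edge-boundary in $G$ has uniformly bounded intersection with every component. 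I would then fix a finite generating family $[c_1],\ldots,[c_n]$ of $\textup{H}^1(G,R_G)$ and represent each class by such a cut.

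Next, I would realize these finitely many cuts as a new Borel graph structure. The quasi-isometry $\gamma:(X,G)\to(Y,G')$ should be built by inserting, for each chosen cut and in a Borel uniform way, ``bridge'' vertices and edges between the two halves of the cut. The union of all such bridge edges forms a Borel subgraph $T$ of $G'$ which is acyclic, provided the generators of $\textup{H}^1$ have been chosen to give independent tracks. Setting $H$ to be the complementary edges then yields a free-product decomposition $G' = T \ast H$ componentwise by iteratively applying Tserunyan's theorem, one splitting at a time.

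The crux, and the step I expect to be hardest, is to show that the remaining piece $H$ is uniformly at most one-ended. Any extra end in some component of $H$ would produce a new almost invariant Borel set whose cohomology class lies outside the $R_G$-submodule spanned by $[c_1],\ldots,[c_n]$, contradicting the choice of generators. Making this obstruction argument Borel and uniform requires (a) a measurable choice of cut representatives with uniformly bounded boundary on all components, (b) a verification that cuts in $H$ lift back to genuinely new classes in $\textup{H}^1(G,R_G)$ modulo the generators already realized in $T$, and (c) uniform control on the degrees of $G'$ so that $\gamma$ remains an honest quasi-isometry of uniformly bounded-degree graphs. Step (b) is the Borel accessibility lemma proper, and I expect it will require a descriptive-set-theoretic adaptation of Dunwoody's tracks-and-patterns surgery, where the non-measurable, group-theoretic disentangling of almost invariant sets is replaced by a Borel combinatorial procedure; it is plausible that the boundedness of the edge-boundaries of the chosen cuts, combined with the bounded-degree hypothesis on $G$, is precisely what allows such a procedure to be carried out definably.
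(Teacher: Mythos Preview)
Your proposal has a fundamental misidentification of what elements of $\textup{H}^1(G,R_G)$ look like, and this derails the rest of the argument. You write that a class should be represented by ``an almost invariant Borel set, i.e.\ a Borel subset $A\subseteq X$''. But $\textup{H}^1(G,R_G)$ is a right $R_G$-module, and $R_G$ acts by translating a hidden base point: concretely (Proposition~\ref{bk10}) $\textup{H}^1(G,R_G)\simeq Z_G/(R_G+\iota(l^\infty_R(X)))$, where an element of $Z_G$ is a function $\xi$ with $\xi(\cdot,x)=1_{C(x)}$ for a cut $C(x)$ \emph{depending on $x$}. A single generator therefore encodes an entire Borel family of cuts indexed by $X$, not one subset of $X$. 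Finite generation does not give you finitely many cuts; what it gives (Proposition~\ref{intr10}(i)) is an a priori infinite Borel cutset $\mathcal{C}$ whose boundaries have uniformly bounded diameter. Your plan to ``insert bridge vertices for each of $n$ cuts'' has no meaning once you see that $n$ is really the size of a generating set for a module over a huge ring, not the number of cuts.

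The second gap is that you never confront nestedness. Structure-tree machinery (and Tserunyan's construction) requires the cuts to be pairwise nested, but the cutset $\mathcal{C}$ coming from finite generation has no reason to be nested; your phrase ``provided the generators \ldots\ give independent tracks'' is where the real work hides. The paper's route is: use bounded degree and bounded boundary diameter to Borel-color $\mathcal{C}$ into finitely many nested pieces $\mathcal{C}_0,\ldots,\mathcal{C}_n$ (Lemma~\ref{lot7}); then run the structure-tree splitting (Lemma~\ref{mos1}, built on Proposition~\ref{lot8}) once for each $\mathcal{C}_i$, tracking via Lemma~\ref{mos6} that after step $k$ the remaining $\textup{H}^1(H_k,R_{H_k})$ is generated only by the images of $\mathcal{C}_{k+1},\ldots,\mathcal{C}_n$. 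After $n+1$ steps the generating cutset is empty, so $\textup{H}^1(H,R_H)=0$ and Proposition~\ref{intr10}(ii) gives uniform one-endedness. Your step~(b) gestures at the right obstruction, but without the nested decomposition and the iteration there is no mechanism to make it work; the contradiction you sketch would require showing a new cut in $H$ is $R_G$-independent of the old ones, which is exactly what fails when cuts cross.
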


Here $R_G$ is an algebra associated to $G$, which is an analog of the group ring for a group. As explained below, it is defined only by the metric strucure of $(X,G)$ as an abstarct graph, that is, its measurable structure is forgotten. Hence the cohomological assumption of Theorem \ref{intr1} is only involved with the metric structure.\par
The consequence of the theorem states that $G$ admits a nice decomposition into two Borel graphs $T$ and $H$, which is an analog of Bass-Serre decomposition of a group. Condition (i) and (ii) means that a vertex of the Bass-Serre tree correspond to an $E_H$-class and an edge of the Bass-Serre tree correspond to an edge of $T$. Finally, condition (iii) means that the decomposition is optimal, i.e., $H$ cannot be decomposed essentially anymore.\par
There are two major contributions of us. First, we not only give a free product decomposition $E_1\ast E_2$, but also give two Borel graphs $T$ and $H$ generating $E_1$ and $E_2$ respectively such that the union of $T$ and $H$ is ``close" to $G$. To be precise, the construction of $T$ basically follows \cite[Theorem 4.1]{Tse}, but as far as we understand, the construction $H$ appears first in this paper (see Proposition \ref{lot8}). Second, we establish the notion of ``optimal decomposition" and characterize when it is possible. This is a crucial part of the paper, where cohomology theory plays a role. Further, the optimality of the decomposition leads to an application in section \ref{sintr2}.

\begin{rmk}[Proposition \ref{bk14}]
The converse of Theorem \ref{intr1} also holds. This is not in the context of Borel graph theory, but assures that the assumption of Theorem \ref{intr1} is proper.
\end{rmk}

Now we explain the terminology. See section \ref{spre1} for more formal definitions. For a graph $(X,G)$, let 
\begin{equation*}
    d_G:X\times X\to \{0,1,2,...,\infty\}
\end{equation*}
be the (extended) path metric, and let $E_G$ be the equivalence relation generated by $G$, i.e.,
\begin{equation*}
    E_G=\{(x,y)\in X\times X\mid d_G(x,y)<\infty\}.
\end{equation*}

\begin{dfn}\label{intr3}
Let $(X,G)$ and $(Y,G')$ be graphs. An (\textit{extended}) \textit{quasi-isometry} $\gamma:(X,G)\to(Y,G')$ is a map $\gamma:X\to Y$ such that
\begin{enumerate}
    \item There exist $l\geq 1$ and $c\geq0$ such that
    \begin{equation*}
        l^{-1}d_G(x_1,x_2)-c\leq d_{G'}(\gamma(x_1),\gamma(x_2))\leq ld_G(x_1,x_2)+c
    \end{equation*}
     for all $x_1,x_2\in X$.
    \item We have
    \begin{equation*}
        \sup_{y\in Y} d_{G'}(y,\gamma(X))=\sup_{y\in Y}\inf_{x\in X}d_{G'}(y,\gamma(x))<\infty.
    \end{equation*}
\end{enumerate}
If a quasi-isometry $\gamma:(X,G)\to (Y,G')$ is a Borel map between Borel graphs, then it is called a \textit{Borel quasi-isometry}.
\end{dfn}

Note that condition (i) implies that $\gamma$ is a reduction of equivalence relations from $E_G$ to $E_{G'}$ since we have $d_G(x_1,x_2)<\infty$ if and only if $d_{G'}(\gamma(x_1),\gamma(x_2))<\infty$.

Let $\Delta_X=\{(x,x)\mid x\in X\}$ denote the diagonal set of $X\times X$.

\begin{dfn}\label{intr14}
For graphs $G$ and $H$ on a set $X$, the graph $G\cup H$ is denoted by $G\ast H$ if $E_G$ and $E_H$ are \textit{freely intersecting}, i.e., if a sequence $\{x_i\}_{i=0}^{2n}\subset X$ with $n\geq 1$ satisfies
\begin{equation*}
    (x_{2i},x_{2i+1})\in E_G\setminus \Delta_X \textup{ and }(x_{2i+1},x_{2i+2})\in E_H\setminus\Delta_X
\end{equation*}
for every $0\leq i\leq n-1$, then we have $x_0\neq x_{2n}$.
\end{dfn}

A \textit{cut} of a graph is a non-empty proper subset of an $E_G$-class whose edge boundary is finite.

\begin{dfn}\label{intr11}
Let $(X,G)$ be a graph with uniformly bounded degrees. The graph $G$ is said to be \textit{uniformly at most one-ended} if for every $k\geq 0$, there exists $r\geq 0$ such that for every cut $C$ of $G$ with $\textup{diam}_G\mleft(\partial_\textup{iv}^GC\mright)\leq k$, we have either $\textup{diam}_G(C)\leq r$ or $\textup{diam}_G\mleft(\overline{C}\mright)\leq r$. Here $\textup{diam}_G$ denotes the diameter with respect to $d_G$. The set $\partial_\textup{iv}^GC$ is the inner vertex boundary of the cut $C$, and $\overline{C}$ is the opposite-side cut of $C$.
\end{dfn}

\subsubsection*{The algebra and cohomology.} 
Let $R$ be a commutative ring.
\begin{nta}
For a set $X$, let $l^\infty_R(X)$ be the set of functions $X\to R$ whose images are finite.
\end{nta}

Let $(X,G)$ be a graph with uniformly bounded degrees.

\begin{dfn}\label{intr7}
For integers $k\geq 0$, set
\begin{align*}
    G^k&=\{(x,y)\in E_G \mid d_G(x,y)\leq k\} \textup{ and}\\
    R_G^k&=\left\{a\in l^\infty_R(E_G) \:\middle|\: a(E_G\setminus G^k)=0\right\}.
\end{align*}
Then the union $R_G=\bigcup_{k=0}^\infty R_G^k$ is a unital $R$-algebra with products defined by: for $a,b\in R_G$,
\begin{equation*}
    (a b)(x,y)=\sum_{z\in[x]_G}a(x,z)b(z,y),
\end{equation*}
where $[x]_G$ is the $E_G$-class containing $x$. This is well-defined since $G$ has uniformly bounded degrees.

The set $l^\infty_R(X)$ is identified with the subalgebra $R_G^0=l^\infty_R(\Delta_X)$, that is, $f\in l^\infty_R(X)$ is identified with the function $(x,x)\in \Delta_X\mapsto f(x)$. The function $1_X\in l^\infty_R(X)$ is the unit of $R_G$.
\end{dfn}

\begin{exm}\label{intr9}
Let $\Gamma$ be a group with a finite generating set $S\subset \Gamma\setminus\{e\}$. Suppose that $\Gamma$ acts freely on a set $X$. Consider the Schreier graph
\begin{equation*}
    G=\left\{(x,s^{\pm 1}x)\in X\times X \:\middle|\: x\in X,\ s\in S\right\}.
\end{equation*}
Then the algebra $R_G$ is isomorphic to the crossed product $R\Gamma\ltimes l^\infty_R(X)$ by the map
\begin{equation*}
    (\delta_\gamma,f)\in R\Gamma\ltimes l^\infty_R(X)\mapsto 1_{\{(\gamma x,x)\mid x\in X\}}\cdot f\in R_G.
\end{equation*}
\end{exm}

We define the cohomology of graphs, following the definition of cohomology of groups.

\begin{dfn}\label{intr6}
Let $M$ be a left $R_G$-module. For integers $n\geq 0$, define
\begin{equation*}
    \textup{H}^n(G,M)=\textup{Ext}_{R_G}^n(l^\infty_R(X),M).
\end{equation*}
Here $l^\infty_R(X)$ is regarded as a left $R_G$-module with the following module structure: for $a\in R_G$ and $f\in l^\infty_R(X)$, the function $a_* f\in l^\infty_R(X)$ is defined by
\begin{equation*}
    (a_* f)(x)=\sum_{z\in[x]_G}a(x,z)f(z).
\end{equation*}
\end{dfn}

\begin{rmk}
A left $R_G$-module can be regarded as a left $\mathbb{Z}_G$-module. However the cohomology groups $\textup{H}^n(G,M)$ do not depend on whether $M$ is regarded as a left $R_G$-module or as a left $\mathbb{Z}_G$-module (Lemma \ref{bk11}). Hence we do not need to indicate the coefficient $R$ in the notation $\textup{H}^n(G,M)$.
\end{rmk}

The cohomology group $\textup{H}^1(G,R_G)$ reflects the structure of cuts of $G$. In fact, we have $\textup{H}^1(G,R_G)=0$ if and only if $G$ is uniformly at most one-ended. Also, $\textup{H}^1(G,R_G)$ is finitely generated as a right $R_G$-module if and only if it is ``generated'' by a family of cuts of $G$ with uniformly bounded boundaries (Proposition \ref{intr10}).

\begin{rmk}
We explain the relationship with Roe's coarse cohomology \cite[Chapter 5]{Roe}. Note that in the above setting, the path metric $d_G$ defines the canonical coarse structure on $X$, so the coarse cohomology $\textup{HX}^n(X,R)$ can be defined. Then we have the isomorphism
\begin{equation*}
    \textup{HX}^n(X,R)\simeq \bigoplus_{x\in A}\textup{H}^n(G,R_G)\cdot 1_{\{x\}},
\end{equation*}
where $A\subset X$ is any representative set for the equivalence relation $E_G$. These cohomology groups have similar properties, but $\textup{H}^n(G,R_G)$ is involved with more uniform conditions than $\textup{HX}^n(X,R)$ in general. For example, $\textup{HX}^1(X,R)=0$ if and only if every component of $G$ is at most one-ended. As a result, if $(X,G)$ is a one-ended tree, then $\textup{HX}^1(X,R)=0$ but $\textup{H}^1(G,R_G)\neq 0$. Another important difference is that $\textup{H}^n(G,R_G)$ has a structure of right $R_G$-module while $\textup{HX}^n(X,R)$ does not admit such an action of translations. This enables us to discuss the ``finite generation" condition for $\textup{H}^1(G,R_G)$ in an appropriate sense.
\end{rmk}

\subsection{Borel graphs of cohomological dimension one}\label{sintr2}

Recall that for a commutative ring $R$, the $R$-\textit{cohomological dimension} of a group $\Gamma$, which is denoted by $\textup{cd}_R(\Gamma)$, is the smallest $n\in\{0,1,2,...,\infty\}$ such that $\textup{H}^i(\Gamma,M)=0$ for all left $R\Gamma$-module $M$ and $i>n$. One of the applications of the Stallings theorem for ends of groups is Stallings-Swan's theorem that for any non-zero commutative ring $R$, a torsion free group $\Gamma$ with $\textup{cd}_R(\Gamma)\leq 1$ must be free \cite[Theorem A]{Swa}. By using accessibility of groups, Dunwoody gives a variant of this theorem as follows:

\begin{thm}[{\cite[Corollary 1.2]{Dun1}}] \label{intr5}
Let $R$ be a non-zero divisible commutative ring and $\Gamma$ a finitely generated group. Then the following conditions are equivalent:
\begin{enumerate}
    \item The group $\Gamma$ is virtually free, i.e., it has a free subgroup of finite index.
    \item $\textup{cd}_R(\Gamma)\leq 1$.
\end{enumerate}
\end{thm}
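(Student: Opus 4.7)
For $(\mathrm{i})\Rightarrow(\mathrm{ii})$, the plan is to pass to a free normal subgroup $F\trianglelefteq\Gamma$ of finite index, replacing $F$ by $\bigcap_{g\in\Gamma}gFg^{-1}$ if needed. The free group $F$ admits a length-one free resolution $0\to(RF)^{S}\to RF\to R\to 0$ of $R$ over $RF$ for any free generating set $S$, so $\textup{cd}_R(F)\leq 1$. Divisibility of $R$ kills the positive-degree cohomology of the finite quotient $Q=\Gamma/F$ in any $RQ$-module (via the standard averaging/transfer argument, which works because $|Q|$-fold summation is surjective on a divisible $R$-module). Feeding this into the Lyndon--Hochschild--Serre spectral sequence $\textup{H}^p(Q,\textup{H}^q(F,M))\Rightarrow \textup{H}^{p+q}(\Gamma,M)$ collapses it to $\textup{H}^n(\Gamma,M)\cong \textup{H}^n(F,M)^Q$, which vanishes for $n\geq 2$; so $\textup{cd}_R(\Gamma)\leq 1$.

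For $(\mathrm{ii})\Rightarrow(\mathrm{i})$, my strategy is to invoke Theorem \ref{intr4}. First I upgrade $\textup{cd}_R(\Gamma)\leq 1$ to finite generation of $\textup{H}^1(\Gamma,R\Gamma)$ as a right $R\Gamma$-module. Since $\Gamma$ is finitely generated, the augmentation ideal $I_\Gamma\subset R\Gamma$ is finitely generated as a left $R\Gamma$-module; the dimension shift $\textup{Ext}^i_{R\Gamma}(I_\Gamma,-)\cong\textup{Ext}^{i+1}_{R\Gamma}(R,-)$ together with the hypothesis forces $I_\Gamma$ to be projective, hence finitely generated projective. Applying $\textup{Hom}_{R\Gamma}(-,R\Gamma)$ to $0\to I_\Gamma\to R\Gamma\to R\to 0$ then realizes $\textup{H}^1(\Gamma,R\Gamma)$ as a quotient of the finitely generated right $R\Gamma$-module $\textup{Hom}_{R\Gamma}(I_\Gamma,R\Gamma)$. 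Theorem \ref{intr4} now produces a cocompact action of $\Gamma$ on a tree $T$ whose edge stabilizers are finite and whose vertex stabilizers $V_i$ are finitely generated with at most one end.

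The main obstacle is then to show every $V_i$ is finite. Once this is established, $\Gamma$ is the fundamental group of a finite graph of finite groups, and one constructs a free subgroup of finite index acting freely on $T$ (e.g.\ via Karrass--Pietrowski--Solitar). By Shapiro's lemma cohomological dimension is subgroup-monotone, so $\textup{cd}_R(V)\leq 1$ for $V=V_i$. If $V$ were infinite it would be exactly one-ended, and over the divisible ring $R$ this forces $\textup{H}^1(V,RV)=0$. The same Hom-sequence argument as above, applied to $V$ and using $\textup{Hom}_{RV}(R,RV)=0$ for infinite $V$, yields an isomorphism $\textup{Hom}_{RV}(I_V,RV)\cong RV$ of right $RV$-modules. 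Reflexivity of the finitely generated projective $I_V$ then gives $I_V\cong RV$ as left $RV$-modules, so the augmentation sequence becomes $0\to RV\xrightarrow{\cdot u}RV\to R\to 0$ for some $u\in I_V$. The final step is to classify when $RV/RVu$ is the trivial $RV$-module $R$: this forces $V$ to be infinite cyclic, contradicting one-endedness. This last classification is the delicate part of the argument, and it is exactly here that the divisibility of $R$ plays its essential role, by ruling out torsion phenomena in $V$ that would obstruct the conclusion $V\cong\mathbb{Z}$.
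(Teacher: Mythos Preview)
The paper does not give its own proof of this theorem: it is quoted verbatim as \cite[Corollary~1.2]{Dun1} and serves only as motivation for the Borel-graph analog (Theorem~\ref{intr2}). So there is no ``paper's proof'' to compare against; what follows is an assessment of your argument on its own merits.

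Your direction $(\mathrm{i})\Rightarrow(\mathrm{ii})$ is fine. Since the ring is unital and divisible, every positive integer is a unit in $R$, and then the transfer argument (or the LHS spectral sequence) reduces to $\textup{cd}_R(F)\leq 1$ for the free finite-index subgroup $F$.

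Your direction $(\mathrm{ii})\Rightarrow(\mathrm{i})$ is correct through the point where you obtain a vertex stabilizer $V$ that is finitely generated with at most one end and satisfies $\textup{cd}_R(V)\leq 1$. The final step, however, is both more complicated than necessary and not actually carried out: you reduce to classifying when $RV/RVu\cong R$ and then wave at a Swan-type argument without performing it. There is a much shorter route. From $\textup{cd}_R(V)\leq 1$ and $\textup{H}^1(V,RV)=0$ one gets $\textup{cd}_R(V)=0$ directly (this is the group-ring analog of Lemmas~\ref{bk7} and~\ref{bk6} in the paper): the augmentation ideal $I_V$ is projective, and the long exact Ext sequence then shows $\textup{Ext}^1_{RV}(R,M)=0$ for every $M$, so $R$ is a projective $RV$-module. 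But if $V$ is infinite then $\textup{Hom}_{RV}(R,RV)=(RV)^V=0$, so $R$ cannot embed as a summand of any free $RV$-module, forcing $R=0$, a contradiction. Hence $V$ is finite. This bypasses entirely the reflexivity step, the isomorphism $I_V\cong RV$, and the ``delicate classification'' you allude to; note also that divisibility plays no role here --- it is needed only for $(\mathrm{i})\Rightarrow(\mathrm{ii})$.
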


\begin{rmk}
More generally, $R$ can be any non-zero commutative ring such that if $\Gamma$ has a torsion element of order $n$, then $n$ is a divisor of $1\in R$.
\end{rmk}

Now we define the cohomological dimension of graphs analogously.

\begin{dfn}\label{intr13}
Let $R$ be a commutative ring and $(X,G)$ a graph with uniformly bounded degrees. The $R$-\textit{cohomological dimension} of $G$, which is denoted by $\textup{cd}_R(G)$, is the smallest $n\in\{0,1,2,...,\infty\}$ such that $\textup{H}^i(G,M)=0$ for all left $R_G$-module $M$ and $i>n$.
\end{dfn}

Then as a consequence of Theorem \ref{intr1}, we obtain an analog of Theorem \ref{intr5} for Borel graphs as follows:

\begin{thmA}[Theorem \ref{mos4}] \label{intr2}
Let $R$ be a non-zero commutative ring and $(X,G)$ a Borel graph with uniformly bounded degrees. Then the following conditions are equivalent:
\begin{enumerate}
    \item There exists a Borel acyclic graph on $X$ Lipschitz equivalent to $G$.
    \item $\textup{cd}_R(G)\leq 1$.
\end{enumerate}
\end{thmA}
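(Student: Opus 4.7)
The argument parallels Dunwoody's deduction of Theorem~\ref{intr5} from Theorem~\ref{intr4}, with the accessibility criterion replaced by Theorem~\ref{intr1}.

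For the direction (i) $\Rightarrow$ (ii), observe that two graphs on the same vertex set $X$ that are Lipschitz equivalent share the same filtration $\{G^k\}$ up to constants, so $R_G = R_T$ as $R$-algebras and $\textup{cd}_R(G) = \textup{cd}_R(T)$. It therefore suffices to show $\textup{cd}_R(T) \leq 1$ for any acyclic graph $(X,T)$ of uniformly bounded degrees. The plan is to write down an explicit length-one projective resolution
\begin{equation*}
    0 \to P \to R_T \xrightarrow{\varepsilon} l^\infty_R(X) \to 0,
\end{equation*}
where $\varepsilon(a)(x)=\sum_{z} a(x,z)$ and $P$ is an edge module supported on $T\setminus\Delta_X$; acyclicity of $T$ forces $P$ to be projective, mirroring the standard length-one resolution of the trivial module for a free group.

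For (ii) $\Rightarrow$ (i), the first step is to verify the hypothesis of Theorem~\ref{intr1}. Because $G$ has uniformly bounded degrees, the kernel $I$ of $\varepsilon:R_G\to l^\infty_R(X)$ is finitely generated as a left $R_G$-module, via finitely many edge-difference operators produced from a Borel proper edge-coloring of $G$. Under $\textup{cd}_R(G)\leq 1$ the module $I$ is projective, and applying $\textup{Hom}_{R_G}(-,R_G)$ to the defining short exact sequence presents $\textup{H}^1(G,R_G)$ as a cokernel of a map between finitely generated right $R_G$-modules, hence as finitely generated. Theorem~\ref{intr1} then supplies an injective Borel quasi-isometry $\gamma:(X,G)\to(Y,G')$ with $G'=T\ast H$, $T$ acyclic and $H$ uniformly at most one-ended.

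The decisive step is to eliminate $H$. Cohomological dimension is invariant under Borel quasi-isometries between graphs of uniformly bounded degrees, by transferring $R_{G'}$-modules to $R_G$-modules along $\gamma$, so $\textup{cd}_R(G')\leq 1$. A Mayer--Vietoris-type long exact sequence attached to the free product decomposition $G'=T\ast H$ (analogous to the standard sequence for free products of groups) should then yield $\textup{cd}_R(H)\leq 1$. The heart of the proof is the Borel-graph analogue of Stallings--Swan: a uniformly at most one-ended graph with uniformly bounded degrees and $\textup{cd}_R\leq 1$ must have every $E_H$-class of uniformly bounded $d_H$-diameter. Granted this, $G'$ is Lipschitz equivalent to $T$, and transferring $T$ back through the injective quasi-isometry $\gamma$ produces a Borel acyclic graph on $X$ Lipschitz equivalent to $G$.

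The main obstacle is this last cohomological dichotomy: promoting the combination of uniform at-most-one-endedness and $\textup{cd}_R\leq 1$ to uniformly bounded components, where both the triviality and its uniform rate must be achieved Borel-measurably. A secondary technical point is the invariance of cohomological dimension under Borel quasi-isometries; unlike the Lipschitz case, where the algebra $R_G$ is literally preserved, the quasi-isometric case requires an honest module-transfer construction between $R_G$ and $R_{G'}$.
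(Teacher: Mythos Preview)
Your outline matches the paper's proof closely: (i)$\Rightarrow$(ii) via Lemma~\ref{bk4}, and (ii)$\Rightarrow$(i) via Lemma~\ref{bk8} to trigger Theorem~\ref{intr1}, then Lemma~\ref{bk9} (quasi-isometry invariance of $\textup{cd}_R$) and Lemma~\ref{lot2} (passage to a free factor) to get $\textup{cd}_R(H)\leq 1$. Two remarks on the places you flag as obstacles.

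First, the ``Stallings--Swan dichotomy'' you worry about is resolved in the paper by a short purely algebraic chain, with no Borel content whatsoever: uniformly at most one-ended is \emph{equivalent} to $\textup{H}^1(H,R_H)=0$ (Proposition~\ref{intr10}(ii)); then $\textup{cd}_R(H)\leq 1$ together with $\textup{H}^1(H,R_H)=0$ forces $\textup{cd}_R(H)=0$ by the elementary Lemma~\ref{bk6}; and $\textup{cd}_R(H)=0$ is equivalent to $E_H$ being \emph{uniformly finite} (Lemma~\ref{bk5}). The uniformity comes for free from the algebra $R_H$, and measurability is irrelevant here since $R_H$ sees only the abstract metric structure. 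Note also that uniform finiteness of $E_H$ does not make $G'$ Lipschitz equivalent to $T$ itself; rather one replaces $H$ by a Borel spanning forest $T_2$ and takes $T'=T\ast T_2$.

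Second, the paper does not use a Mayer--Vietoris sequence for $G'=T\ast H$; instead Lemma~\ref{lot2} shows directly that $R_{G'}$ is projective as a left $R_H$-module (because $H$ is coarsely embedded in $G'$), so any $R_{G'}$-projective resolution of $l^\infty_R(X)$ is already an $R_H$-projective resolution. The final transfer from $(Y,T')$ back to $X$ is also slightly more delicate than ``pull back through $\gamma$'': the paper chooses a specific Borel quasi-inverse $\lambda$ whose fibres $\lambda^{-1}(x)$ are uniformly bounded subtrees of $T'$, and then contracts those subtrees to obtain the Borel acyclic graph on $X$.
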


Here, we say that a graph $H$ on $X$ is \textit{Lipschitz equivalent} to $G$ if $\textup{id}_X:(X,H)\to (X,G)$ is a quasi-isometry. We should emphasize again that condition (i) ensures the existence of another Borel graph even though condition (ii) is only concerning the metric structure.

\subsubsection*{An application.}
Treeable equivalence relations have been studied as an analog of free groups. Chen-Poulin-Tao-Tserunyan show that treeability has a property similar to the quasi-isometric rigidity of free groups: If $(X,G)$ is a locally finite Borel graph such that every component is quasi-isometric to a tree, then the equivalence relation generated by $G$ is treeable \cite[Theorem 1.1]{CPTT}. Their proof applies the theory of median graphs to Borel graphs. Moreover, by refining the argument, they show the following:

\begin{thm}[{\cite[Theorem 1.2]{CPTT}}]\label{intr12}
Let $(X,G)$ be a Borel graph with uniformly bounded degrees. If $(X,G)$ is quasi-isometric to an acyclic graph as an abstract graph, then there exists an Borel acyclic graph on $X$ Lipschitz equivalent to $G$.
\end{thm}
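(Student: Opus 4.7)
The plan is to derive Theorem \ref{intr12} from Theorem \ref{intr2}. Given that $(X,G)$ is quasi-isometric to an acyclic graph as an abstract graph, it suffices to show that $\textup{cd}_R(G)\leq 1$ for some non-zero commutative ring $R$; Theorem \ref{intr2} will then provide a Borel acyclic graph on $X$ Lipschitz equivalent to $G$. No further use of Borel structure is needed in the intermediate computation: since the hypothesis only concerns the abstract graph structure, $R$ may be chosen freely (say $R=\mathbb{Z}$), and the metric structure alone controls $\textup{cd}_R(G)$.

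This reduction splits into two steps. The first is the quasi-isometric invariance of $\textup{cd}_R$ among graphs with uniformly bounded degrees. Both the algebra $R_G$ and the distinguished module $l^\infty_R(X)$ are defined purely in terms of the metric $d_G$, so a quasi-isometry $\gamma\colon(X,G)\to(Y,G')$ should induce a Morita-type equivalence between the pairs $(R_G,l^\infty_R(X))$ and $(R_{G'},l^\infty_R(Y))$ that preserves Ext-functors. A convenient device is to pass through the enlarged graph $\widehat{G}$ on $X\sqcup Y$ obtained by adding an edge from $x$ to $\gamma(x)$ for each $x\in X$ on top of the edges of $G$ and $G'$; then $\widehat{G}$ still has uniformly bounded degrees, and $R_{\widehat{G}}$ contains $R_G$ and $R_{G'}$ as corner subalgebras cut out by $1_X$ and $1_Y$. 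A quasi-inverse of $\gamma$ can then be used to exhibit these corners as full, producing the desired equivalence.

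The second step is that any acyclic graph $(Y,G')$ with uniformly bounded degrees satisfies $\textup{cd}_R(G')\leq 1$. For this, choose an orientation of $G'$ and consider the short exact sequence of left $R_{G'}$-modules
\begin{equation*}
    0 \longrightarrow P \xrightarrow{\ \partial\ } R_{G'} \xrightarrow{\ \varepsilon\ } l^\infty_R(Y) \longrightarrow 0,
\end{equation*}
where $P$ is the free $R_{G'}$-module generated by the oriented edges, $\varepsilon$ is the natural augmentation, and $\partial$ sends an oriented edge $(y_1,y_2)$ to the class of $1_{(y_1,y_1)}-1_{(y_1,y_2)}$. Acyclicity of $G'$ forces $\partial$ to be injective and to map onto $\ker\varepsilon$, because every element of $\ker\varepsilon$ telescopes uniquely along the unique paths in the forest $G'$. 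The resulting length-one projective resolution yields $\textup{cd}_R(G')\leq 1$.

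The main obstacle is the quasi-isometric invariance of $\textup{cd}_R$. The map $\gamma$ is generally neither injective nor surjective and may collapse or spread out points, so direct algebra maps between $R_G$ and $R_{G'}$ do not respect the finite-propagation grading and cannot be expected to be isomorphisms. Making the Morita comparison precise while simultaneously tracking the distinguished augmentation modules $l^\infty_R(X)$ and $l^\infty_R(Y)$, and verifying that the corners cut out by $1_X$ and $1_Y$ inside $R_{\widehat{G}}$ are full and generate Morita equivalent categories of modules, is the central technical point. Once this invariance is in hand, Theorem \ref{intr12} follows by chaining it with the acyclic computation and invoking Theorem \ref{intr2}.
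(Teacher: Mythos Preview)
Your overall plan—establish $\textup{cd}_R(G)\leq 1$ and then invoke Theorem~\ref{intr2}—is exactly the paper's. The paper's execution of the first step is more direct than yours, however: rather than proving quasi-isometric invariance of $\textup{cd}_R$, it notes that the hypothesis already supplies a (not necessarily Borel) acyclic graph $T$ \emph{on $X$ itself} which is Lipschitz equivalent to $G$; then $R_G=R_T$ holds outright (Remark~\ref{pre1}), so $\textup{cd}_R(G)=\textup{cd}_R(T)\leq 1$ by Lemma~\ref{bk4}, and Theorem~\ref{intr2} finishes. Your route via quasi-isometric invariance is also valid, and in fact the paper proves this invariance as Lemma~\ref{bk9}—not through your enlarged-graph/Morita device on $X\sqcup Y$, but by showing directly that $1_{\gamma(X)}\cdot R_{G'}$ and $R_{G'}\cdot 1_{\gamma(X)}$ are projective over $R_G$ and that tensoring with them carries projective resolutions of $l^\infty_R(Y)$ and $l^\infty_R(X)$ to one another.

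One small gap to flag: your construction of $\widehat{G}$ (and likewise Lemma~\ref{bk9}) requires the target acyclic graph to have uniformly bounded degrees, which the bare hypothesis does not guarantee. You would need to first reduce to that case—e.g., by pulling a tree structure back to a net in $X$—which is precisely the manoeuvre the paper performs tacitly when it passes to a Lipschitz-equivalent tree on $X$. Also, in your sketch of $\textup{cd}_R(\text{acyclic})\leq 1$, the module $P$ should not be taken as free on the (possibly infinite) set of oriented edges; the paper's Lemma~\ref{bk4} instead writes $G'=\bigsqcup_i(\textup{graph}\,\varphi_i\sqcup\textup{graph}\,\varphi_i^{-1})$ for finitely many $\varphi_i\in\llangle G'\rrangle$ and uses $\bigoplus_i R_{G'}\cdot 1_{\textup{im}\varphi_i}$, which is a finite direct sum of projectives.
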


Now we give an alternate proof of this theorem through Theorem \ref{intr2}: Let $R$ be a non-zero commutative ring. By assumption, there exists a (not necessarily Borel) acyclic graph $T$ on $X$ Lipschitz equivalent to $G$. Note that $R_G=R_T$ holds. Since $T$ is acyclic, it has $R$-cohomological dimension at most $1$ (Lemma \ref{bk4}), and so does $G$. Then by Theorem \ref{intr2}, there exists a Borel acyclic graph on $X$ Lipschitz equivalent to $G$. Theorem \ref{intr12} is proved.

This new proof has a common feature with the standard proof of the quasi-isometric rigidity of free groups. Indeed, our proof relies on the decomposition of Borel graphs given by Theorem \ref{intr1} in the same way that the proofs of 
\cite[Chapitre 7, Th\'{e}or\`{e}me 19]{GH} and \cite[Theorem 20.45]{DK} rely on accessibility of groups.

\begin{rmk}
Recently, Margolis generalizes Roe's coarse cohomology and defines the coarse $R$-cohomological dimension for metric spaces \cite{Mar}. For a bounded-degree connected graph $(X,G)$, it can be shown that the coarse $R$-cohomological dimension of $(X,d_G)$ is smaller than or equal to $\textup{cd}_R(G)$. We do not know whether these invariants coincide with each other. Now \cite[Proposition 8.10 and Theorem 10.2]{Mar} states that a quasi-geodesic space of coarse $R$-cohomological dimension at most $1$ it is quasi-isometric to a tree. This implies that if a bounded-degree graph $(X,G)$ satisfy $\textup{cd}_R(G)\leq 1$, then every component of $G$ is quasi-isometric to a tree. However, our proof of (ii)$\Rightarrow$(i) of Theorem \ref{intr2} does not use this result.
\end{rmk}

\begin{rmk}
It is proved in \cite[Theorem 1.4]{CPTT} that if $(X,G)$ is a locally finite Borel graph such that every component has \textit{bounded tree-width}, then the equivalence relation generated by $G$ is treeable. Jard\'{o}n-S\'{a}nchez also give a proof of this result in the case of uniformly bounded degrees, by using \textit{tree decompositions} \cite[Theorem 3]{Jar}. This method has some similarity to ours, and can be applied to the setting of Theorem \ref{intr12} since a bounded-degree quasi-tree has bounded tree-width. However, it is non-trivial to make the resulting Borel acyclic graph Lipschitz equivalent to $G$. See also Remark \ref{mos2}.
\end{rmk}

\subsection{Outline of the paper}

In section \ref{spre}, we prepare basic terminology and facts on graph theory and homological algebra.\par
In section \ref{slot}, we present a construction of the structure tree associated to a treeset, which is a main tool to prove our main theorem. Then we apply it to Borel graphs and obtain a prototype of the decomposition.\par 
In section \ref{sbk}, we first describe properties of the cohomology and the cohomological dimension of graphs. Then we investigate the group $\textup{H}^1(G,R_G)$, and its relation with the cutsets of $G$.\par 
In section \ref{smos}, by combining all the above content, we prove Theorems \ref{intr1} and \ref{intr2}.

\section{Preliminaries}\label{spre}

\subsection{Terminology of graph theory}\label{spre1}

\begin{dfn}
An (\textit{abstract}) \textit{graph} $(X,G)$ is a pair of the vertex set $X$ and the edge set $G\subset X\times X$ such that:
\begin{enumerate}
    \item $G\cap \Delta_X=\varnothing$, where $\Delta_X=\{(x,x)\mid x\in X\}$, \textup{ and}
    \item if $(x,y)\in G$, then $(y,x)\in G$.
\end{enumerate}
In this case, we also say that $G$ is a graph on $X$. If further $X$ is a standard Borel space and $G\subset X\times X$ is a Borel subset, then $(X,G)$ is called a \textit{Borel graph}.

If $G$ and $H$ are graphs on a set $X$ such that $H\subset G$, then $H$ is called a \textit{subgraph} of $G$.
\end{dfn}

\begin{dfn}
Let $(X,G)$ be a graph.
\begin{itemize}
    \item If a sequence $\{x_i\}_{i=0}^n$ satisfies $(x_i,x_{i+1})\in G$ for every $0\leq i\leq n-1$, then it is called a ($G$-)path from $x_0$ to $x_n$. The \textit{length} of this path is $n$. This path is \textit{simple} if $x_i\neq x_j$ for all $i\neq j$, and is a \textit{cycle} if $x_0=x_n$.
    \item The graph $(X,G)$ is \textit{connected} if for every $x,y\in X$, there exists a path from $x$ to $y$, and is \textit{acyclic} if there is no cycle $\{x_i\}_{i=0}^n$ with $n\geq 2$ such that the path $\{x_0\}_{i=0}^{n-1}$ is simple. A connected and acyclic graph is called a \textit{tree}.
    \item For a subset $A\subset X$, it is $G$-\textit{connected} if the graph $(A, (A\times A)\cap G)$ is connected. A maximal $G$-connected subset of $A$ is called a $G$-\textit{connected component} of $A$.
    \item The \textit{degree} of $x\in X$ is the number $|\{y\in X\mid (x,y)\in G\}|$. The graph $(X,G)$ has \textit{uniformly bounded degrees} if $\sup_{x\in X}|\{y\in X\mid (x,y)\in G\}|<\infty$ holds.
\end{itemize}
\end{dfn}

\begin{nta}
Let $(X,G)$ be a graph.
\begin{itemize}
    \item For $x,y\in X$, let $d_G(x,y)$ be the smallest length of the paths from $x$ to $y$ if such paths exist, and let $d_G(x,y)=\infty$ otherwise.
    \item The equivalence relation $E_G$ generated by $G$ is defined by
\begin{equation*}
    E_G=\{(x,y)\in X\times X\mid d_G(x,y)<\infty\}.
\end{equation*}
    For $x\in X$, the $E_G$-class containing $x$ is denoted by $[x]_G$. For $k\geq 0$, set
    \begin{equation*}
        G^k=\{(x,y)\mid d_G(x,y)\leq k\}.
    \end{equation*}
    \item For $x\in X,\ A\subset X$ and $k\geq 0$, set
\begin{align*}
    d_G(x,A)&=\inf_{y\in A}d_G(x,y),\\
    B_G(k;A)&=\{x\in X\mid d_G(x,A)\leq k\},\\
    B_G(k;x)&=B_G(k;\{x\})\textup{ and}\\
    \textup{diam}_G(A)&=\sup_{x,y\in A}d_G(x,y).
\end{align*}
    \item Let $\llangle G\rrangle$ be the set of bijections $\varphi:\textup{dom}\varphi\to\textup{im}\varphi$ between subsets of $X$ such that $\sup_{x\in \textup{dom}\varphi}d_G(\varphi x,x)<\infty$. The composition and the inverse on $\llangle G\rrangle$ are naturally defined. For $\varphi\in\llangle G\rrangle$, set
    \begin{equation*}
        \textup{graph}\varphi=\{(\varphi x,x)\in X\times X\mid x\in\textup{dom}\varphi\}.
    \end{equation*}
\end{itemize}
\end{nta}

\begin{rmk}
Let $(X,G)$ be a graph with uniformly bounded degrees. Then there exists a family $\{\varphi_i\}_{i=1}^n\subset\llangle G\rrangle$ such that $G=\bigsqcup_{i=1}^n\mleft(\textup{graph}\varphi_i\sqcup\textup{graph}\varphi_i^{-1}\mright)$.
\end{rmk}

Recall that quasi-isometries between graphs are defined in Definition \ref{intr3}.

\begin{rmk}
For a quasi-isometry $\gamma:(X,G)\to(Y,G')$, there exists a quasi-isometry $\lambda:(Y,G')\to (X,G)$ such that
\begin{align*}
    \sup_{x\in X}d_G(\lambda\circ\gamma(x),x)<\infty\ \textup{ and }
    \sup_{y\in Y}d_G(\gamma\circ\lambda(y),y)<\infty.
\end{align*}
This is called a \textit{quasi-isometric inverse} of $\gamma$. If $\gamma$ is a Borel quasi-isometry between Borel graphs with uniformly bounded degrees, then we can take a quasi-isometric inverse of $\gamma$ to be Borel by the Lusin-Novikov uniformization theorem. 
\end{rmk}

\begin{rmk}\label{pre4}
If $\gamma:(X,G)\to(Y,G')$ is an injective quasi-isometry, then there exists $l\geq 1$ such that $\gamma$ is $l$-biLipschitz, i.e.,
\begin{equation*}
        l^{-1}d_G(x_1,x_2)\leq d_{G'}(\gamma(x_1),\gamma(x_2))\leq ld_G(x_1,x_2)
\end{equation*}
for all $x_1,x_2\in X$. This follows from the fact that a bijective quasi-isometry between uniformly discrete metric spaces is automatically biLipschitz.
\end{rmk}

\begin{dfn}
Let $G$ and $H$ be graphs on a set $X$.
\begin{enumerate}
    \item We say that $H$ is a \textit{coarsely embedded subgraph} of $G$ if for every $k\geq 0$, there exists $l\geq 0$ such that if $H^k\subset G^l$ and $G^k\cap E_H\subset H^l$.
    \item We say that $G$ and $H$ are \textit{Lipschitz equivalent} if there exists $l\geq 1$ such that $H\subset G^l$ and $G\subset H^l$. When we indicate the number $l$, we say that they are $l$-Lipschitz equivalent.
\end{enumerate}
\end{dfn}

\begin{rmk}\label{pre1}
Let $R$ be a non-zero commutative ring and $G,H$ graphs on a set $X$ with uniformly bounded degrees. Then $H$ is a coarsely embedded subgraph of $G$ if and only if $R_H\subset R_G$ and $a|_{E_H}\in R_H$ for all $a\in R_G$. Also, $G$ and $H$ are Lipschitz equivalent if and only if $R_G=R_H$.
\end{rmk}

Let $(X,G)$ be a graph in this subsection below.

\begin{nta}\label{lot6}
Let $\omega\in X/E_G$ be an $E_G$-class. For a subset $C\subset \omega$, set
\begin{align*}
    \overline{C}&=\omega\setminus C,\\
    \partial_\textup{iv}^GC&=\left\{x\in C \:\middle|\: \exists y\in \overline{C},\ (x,y)\in G\right\},\\
    \partial_\textup{ov}^GC&=\partial_\textup{iv}^G\overline{C},\\
    \partial_\textup{ie}^GC&=(\overline{C}\times C)\cap G\textup{ and}\\
    \partial_\textup{oe}^GC&=\partial_\textup{ie}^G\overline{C}.\\
\end{align*}
\end{nta}

\begin{dfn}
A subset $C$ of an $E_G$-class is called a \textit{cut} of $G$ if $C$ and $\overline{C}$ are non empty and $\left|\partial_\textup{ie}^GC\right|=\left|\partial_\textup{oe}^GC\right|<\infty$. A family of cuts of $G$ is called a \textit{cutset} of $G$. Note that a cut $C$ of $G$ is determined only by $\partial_\textup{oe}^GC$.
\end{dfn}

\begin{dfn}
A cutset $\mathcal{C}$ of $G$ is said to 
\begin{enumerate}
    \item be \textit{nested} if for all cuts $C,D\in\mathcal{C}$ on the same $E_G$-class, either
    \begin{equation*}
        C\cap D,\ C\cap\overline{D},\ \overline{C}\cap D\textup{ or }\overline{C}\cap\overline{D}
    \end{equation*}
    is empty,
    \item be \textit{closed under complementation} if for every $C\in\mathcal{C}$, we have $\overline{C}\in\mathcal{C}$, and
    \item have \textit{uniformly bounded boundaries} if 
    \begin{equation*}
        \sup_{C\in\mathcal{C}}\textup{diam}_G\mleft(\partial_\textup{iv}^GC\mright)<\infty.
    \end{equation*}
\end{enumerate}
\end{dfn}

\begin{nta}\label{pre5}
Let $\textup{Cut}(G)$ be the set of cuts of $G$. There will be some situation that a family $\mathcal{C}$ is a subset of 
\begin{equation*}
    \textup{Cut}(G)\cup\{\omega\}_{\omega\in X/E_G}\cup\{\varnothing\}.
\end{equation*}
Then we set
\begin{equation*}
    \mathcal{C}^*=\mathcal{C}\cap\textup{Cut}(G).
\end{equation*}
\end{nta}

\subsection{Basic homological algebra}

Let $R$ be a (not necessarily commutative) ring. First we recall a construction of the Ext functor.

\begin{dfn}
An $R$-\textit{projective resolution} of a left $R$-module $N$ is an exact sequence
\begin{equation*}
    \cdots\rightarrow P_{n}\xrightarrow{\partial_n} P_{n-1} \to \cdots \xrightarrow{\partial_1} P_0\xrightarrow{\varepsilon} N\rightarrow 0
\end{equation*}
of left $R$-modules such that all $P_i$ are projective.
\end{dfn}

Let $N$ be a left $R$-module. Take an $R$-projective resolution
\begin{equation*}
    \cdots\rightarrow P_{n}\xrightarrow{\partial_n} P_{n-1} \to \cdots \xrightarrow{\partial_1} P_0\xrightarrow{\varepsilon} N\rightarrow 0.
\end{equation*}
For another left $R$-module $M$, the set of left $R$-homomorphisms from $N$ to $M$ is denoted by $\textup{Hom}_{R}(N,M)$. The cochain complex $\textup{Hom}_R(P_*,M)$ is defined by
\begin{equation*}
    \cdots\leftarrow \textup{Hom}_R(P_n,M)\xleftarrow{\partial_n^*} \textup{Hom}_R(P_{n-1},M)\leftarrow\cdots\xleftarrow{\partial_1^*} \textup{Hom}_R(P_0,M)\leftarrow 0,
\end{equation*}
where $\partial_n^*=-\circ \partial_n$.
Then for $n\geq 0$, the abelian group $\textup{Ext}_R^n(N,M)$ is defined by  the cohomology of this complex, i.e.,
\begin{equation*}
    \textup{Ext}_R^n(N,M)=\textup{H}^n(\textup{Hom}_R(P_*,M))
    =\left\{
\begin{array}{ll}
\textup{ker}\partial_{n+1}^*/\textup{im}\partial_n^*\quad (n\geq 1),\\
\textup{ker}\partial_1^*\quad (n=0).
\end{array}
\right.
\end{equation*}
These do not depend on the choice of projective resolutions since they are unique up to chain homotopy equivalence by the fundamental theorem of homological algebra.

Note that if $M$ is an $R$-$S$-bimodule for some ring $S$, then $\textup{Ext}_R^n(N,M)$ are right $S$-modules since $\textup{Hom}_R(P_*,M)$ is naturally a cochain complex of right $S$-modules.

\begin{prop}[{\cite[Proposition 8.6]{Rot}}]\label{pre2}
Let $N$ be a left $R$-module. For every integer $n\geq 0$, the following conditions are equivalent:
\begin{enumerate}
    \item $\textup{Ext}_R^i(N,M)=0$ for all left $R$-module $M$ and $i>n$.
    \item $\textup{Ext}_R^{n+1}(N,M)=0$ for all left $R$-module $M$.
    \item There exists an $R$-projective resolution of $N$ such that
    \begin{equation*}
        0\to P_{n}\to P_{n-1}\to\cdots\to P_0\to N\to 0.
    \end{equation*}
    \item For every $R$-projective resolution 
    \begin{equation*}
        \cdots\to P_{n}\xrightarrow{\partial_{n}} P_{n-1}\to\cdots\to P_0\xrightarrow{\partial_0} N\to 0
    \end{equation*}
    of $N$, the left $R$-module $\textup{im}\partial_n$ is projective.
\end{enumerate}
\end{prop}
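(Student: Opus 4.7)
The plan is to establish the cyclic chain $(\textup{iii})\Rightarrow(\textup{i})\Rightarrow(\textup{ii})\Rightarrow(\textup{iv})\Rightarrow(\textup{iii})$. The implication $(\textup{i})\Rightarrow(\textup{ii})$ is immediate. For $(\textup{iv})\Rightarrow(\textup{iii})$, I would take any projective resolution of $N$ and truncate: replace the tail
\[
\cdots\to P_{n+1}\xrightarrow{\partial_{n+1}} P_{n}\xrightarrow{\partial_{n}} P_{n-1}
\]
by the short sequence $0\to\textup{im}\,\partial_{n}\to P_{n-1}$. Exactness at $P_{n-1}$ is inherited from the original, and projectivity of $\textup{im}\,\partial_{n}$ (hypothesis (iv)) turns this into a genuine finite-length projective resolution of $N$. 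For $(\textup{iii})\Rightarrow(\textup{i})$, computing $\textup{Ext}_R^{i}(N,M)$ from the resolution furnished by (iii) gives a cochain complex $\textup{Hom}_{R}(P_\ast,M)$ that is identically zero in degrees $>n$, so its cohomology vanishes there as well.

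The substantive step is $(\textup{ii})\Rightarrow(\textup{iv})$, and this is where I expect the argument to concentrate. Starting from an arbitrary projective resolution, I set $K_{i}=\textup{ker}\,\partial_{i}$ for $i\geq 0$; by exactness $K_{i-1}=\textup{im}\,\partial_{i}$, and in particular $K_{n-1}=\textup{im}\,\partial_{n}$. The short exact sequences
\[
0\to K_{i}\to P_{i}\to K_{i-1}\to 0
\]
induce long exact sequences in $\textup{Ext}_{R}^{\ast}(-,M)$, and because each $P_{i}$ is projective the groups $\textup{Ext}_{R}^{j}(P_{i},M)$ vanish for $j\geq 1$. The connecting homomorphisms therefore yield dimension-shifting isomorphisms $\textup{Ext}_{R}^{j+1}(K_{i-1},M)\cong\textup{Ext}_{R}^{j}(K_{i},M)$ for all $j\geq 1$. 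Iterating this $n$ times gives
\[
\textup{Ext}_{R}^{n+1}(N,M)\cong\textup{Ext}_{R}^{1}(K_{n-1},M),
\]
which vanishes for every left $R$-module $M$ by hypothesis (ii). The standard characterization of projectivity via $\textup{Ext}^{1}$ then forces $K_{n-1}=\textup{im}\,\partial_{n}$ to be projective, establishing (iv).

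The main obstacle is really the homological-algebra infrastructure supporting the dimension-shifting step: namely, the long exact $\textup{Ext}$-sequence attached to a short exact sequence of modules, and the equivalence between projectivity of $K$ and the vanishing of $\textup{Ext}_{R}^{1}(K,-)$. Both are classical consequences of the horseshoe lemma together with the definition of $\textup{Ext}$ via projective resolutions, so once these tools are admitted the argument is essentially routine. Since the statement is Proposition 8.6 of Rotman, in the write-up one would simply cite the reference, but the sketch above reconstructs the proof.
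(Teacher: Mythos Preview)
The paper does not prove this proposition at all; it is stated with a citation to \cite[Proposition 8.6]{Rot} and then used as a black box. Your reconstruction of the standard cyclic argument $(\textup{iii})\Rightarrow(\textup{i})\Rightarrow(\textup{ii})\Rightarrow(\textup{iv})\Rightarrow(\textup{iii})$ via dimension shifting is correct and is essentially the textbook proof, so there is nothing to compare beyond noting that you have supplied what the paper deliberately omits.
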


The \textit{projective dimension} of a left $R$-module $N$ is the smallest $n\in\{0,1,2,...,\infty\}$ satisfying any of conditions (i)-(iv). It is denoted by $\textup{pd}_R(N)$.

\begin{rmk}
A left $R$-module $N$ is projective if and only if $\textup{pd}_R(N)=0$.
\end{rmk}

\begin{lem}\label{bk7}
Let $N$ be a left $R$-module and $n\geq 1$ an integer. If $\textup{pd}_R(N)\leq n$ and $\textup{Ext}_R^{n}(N,R)=0$, then we have $\textup{pd}_R(N)\leq n-1$.
\end{lem}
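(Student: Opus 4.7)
The plan is to produce a projective resolution of length $n$, identify the cokernel that obstructs shortening it by one step, and show that this cokernel is projective via a dimension-shift argument.

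By Proposition \ref{pre2}(iii), choose a projective resolution
\begin{equation*}
    0\to P_n\xrightarrow{\partial_n} P_{n-1}\xrightarrow{\partial_{n-1}}P_{n-2}\to\cdots\to P_0\to N\to 0
\end{equation*}
of $N$ and set $K=\textup{im}\,\partial_{n-1}=\ker\partial_{n-2}$. Then
\begin{equation*}
    0\to P_n\xrightarrow{\partial_n} P_{n-1}\to K\to 0
\end{equation*}
is short exact, while $0\to K\to P_{n-2}\to\cdots\to P_0\to N\to 0$ is an exact sequence of length $n-1$ with $P_{n-2},\dots,P_0$ projective. By Proposition \ref{pre2}(iii) applied to this truncated sequence, to conclude $\textup{pd}_R(N)\leq n-1$ it suffices to show that $K$ is projective.

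Next, $K$ being projective is equivalent to the displayed short exact sequence splitting, which in turn is equivalent to $\textup{Ext}_R^1(K,P_n)=0$. Iterating the standard dimension-shift isomorphism along the truncated resolution of $N$ gives a natural identification
\begin{equation*}
    \textup{Ext}_R^1(K,M)\cong\textup{Ext}_R^n(N,M)
\end{equation*}
for every left $R$-module $M$. Taking $M=P_n$ reduces the task to showing $\textup{Ext}_R^n(N,P_n)=0$.

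The main obstacle is bridging the given vanishing $\textup{Ext}_R^n(N,R)=0$ and the required $\textup{Ext}_R^n(N,P_n)=0$. Here I would exploit that $P_n$ is projective by first modifying the resolution so that $P_n$ is free: pick $Q$ with $P_n\oplus Q$ free and replace the last two modules $P_n,P_{n-1}$ by $P_n\oplus Q,P_{n-1}\oplus Q$, extending the boundary by the identity on $Q$; this preserves exactness and projectivity everywhere. In the finitely generated contexts in which this lemma is invoked, $P_n$ may further be taken free of finite rank $m$, so that additivity of $\textup{Ext}$ on finite direct sums yields $\textup{Ext}_R^n(N,P_n)\cong\textup{Ext}_R^n(N,R)^m=0$. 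The short exact sequence then splits, $K$ is a direct summand of the projective $P_{n-1}$, hence projective, and $\textup{pd}_R(N)\leq n-1$.
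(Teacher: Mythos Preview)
Your dimension-shift reduction is sound and arrives at the same essential obstacle as the paper, though by a different route. The paper does not isolate a single target module: it argues directly that $\textup{Ext}_R^n(N,M)=0$ for every $M$ by choosing a surjection $q\colon F\to M$ from a free module and chasing the square
\begin{equation*}
\begin{CD}
\textup{Hom}_R(P_{n-1},F) @>{-\circ\partial_n}>> \textup{Hom}_R(P_n,F)\\
@V{q\circ-}VV @V{q\circ-}VV\\
\textup{Hom}_R(P_{n-1},M) @>{-\circ\partial_n}>> \textup{Hom}_R(P_n,M),
\end{CD}
\end{equation*}
using projectivity of $P_n$ for the right vertical surjection and the vanishing $\textup{Ext}_R^n(N,F)=0$ for the top horizontal one. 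You instead reduce everything to the single condition $\textup{Ext}_R^n(N,P_n)=0$ via the splitting criterion and the dimension-shift isomorphism $\textup{Ext}_R^1(K,-)\cong\textup{Ext}_R^n(N,-)$; this is a tidier reduction, but not substantively different.

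Both arguments must pass from $\textup{Ext}_R^n(N,R)=0$ to $\textup{Ext}_R^n(N,F)=0$ for a free $F$ of possibly infinite rank, and this is precisely the gap you flag. The paper handles it by asserting that ``the functor $\textup{Ext}_R^n(N,-)$ preserves direct sums'', while you restrict to ``the finitely generated contexts in which this lemma is invoked''. Your caution is warranted: without a finiteness hypothesis on $N$ this step is not available in general---for $R=\mathbb{Z}$ and $n=1$ the lemma becomes the assertion that every Whitehead group is free, which Shelah showed to be independent of ZFC. In the paper's only application (Lemma~\ref{bk6}), the module $l^\infty_R(X)$ is finitely presented over $R_G$ by Lemma~\ref{lot3}, so together with $\textup{pd}\leq 1$ one can arrange a length-one resolution with $P_1$ finitely generated projective, and then both your argument and the paper's go through. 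Your explicit caveat is therefore more accurate than the paper's unqualified claim.
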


\begin{proof}
Let $M$ be a left $R$-module. Take a free $R$-module $F$ so that there exists a surjective homomorphism $q:F\to M$. Let 
\begin{equation*}
    0\to P_{n}\xrightarrow{\partial_n} P_{n-1} \to \cdots \to P_0\to N\to 0
\end{equation*}
be an $R$-projective resolution. Since $\textup{Ext}_R^{n}(N,R)=0$ holds and the functor $\textup{Ext}_R^n(N,-)$ preserves direct sums, we have $\textup{Ext}_R^n(N,F)=0$. This implies that the map $-\circ\partial_n:\textup{Hom}_{R}(P_{n-1}, F)\to \textup{Hom}_{R}(P_n,F)$ is surjective. Also the map $q\circ-:\textup{Hom}_{R}(P_n,F)\to \textup{Hom}_{R}(P_n,M)$ is surjective since $P_n$ is projective. Now the following diagram commutes
\begin{equation*}
    \begin{CD}
     \textup{Hom}_{R}(P_{n-1}, F) @>{-\circ\partial_{n}}>> \textup{Hom}_{R}(P_n,F) \\
      @V{q\circ-}VV                                                @V{q\circ-}VV \\
     \textup{Hom}_{R}(P_{n-1},M)             @>{-\circ\partial_{n}}>> \textup{Hom}_{R}(P_n,M)  .
    \end{CD}
\end{equation*}
Then since the upper $-\circ\partial_n$ and the right-side $q\circ -$ are surjective, the lower $-\circ\partial_n$ is also surjective. Hence we have $\textup{Ext}_R^n(N,M)=0$ for every left $R_G$-module $M$. This means that $\textup{pd}_R(N)\leq n-1$.
\end{proof}

\begin{lem}\label{pre3}
Suppose that a left $R$-module $N$ is projective and 
\begin{equation*}
    \cdots\to P_{n}\xrightarrow{\partial_n} P_{n-1} \to \cdots \to P_0\xrightarrow{\varepsilon} N\to 0
\end{equation*}
is an $R$-projective resolution. Then for any right $R$-module $M$, the sequence
\begin{equation*}
    \cdots\to M\otimes_RP_{n}\xrightarrow{\textup{id}_M\otimes\partial_n} M\otimes_RP_{n-1} \to \cdots \to M\otimes_RP_0\xrightarrow{\textup{id}_M\otimes\varepsilon} M\otimes_RN\to 0
\end{equation*}
is exact.
\end{lem}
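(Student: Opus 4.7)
The plan is to split the given long exact sequence into short exact sequences, observe that each of them splits because of projectivity, and then reassemble after tensoring with $M$. Set $K_{-1} = N$, and for $i \geq 0$ let $K_i = \textup{ker}\,\partial_i = \textup{im}\,\partial_{i+1}$ (with the convention $\partial_0 = \varepsilon$). Exactness of the original resolution says precisely that the augmented complex decomposes into short exact sequences
\begin{equation*}
    0 \to K_i \to P_i \to K_{i-1} \to 0
\end{equation*}
for every $i \geq 0$, where the second arrow is the inclusion and the third is induced by $\partial_i$.

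The crucial observation is that every $K_i$ is projective. For $i = -1$ this is the hypothesis on $N$. Assuming $K_{i-1}$ is projective, the short exact sequence above splits, so $P_i \cong K_i \oplus K_{i-1}$; then $K_i$ is a direct summand of the projective module $P_i$ and hence is itself projective, completing the induction.

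Since every one of these short exact sequences splits, applying the additive functor $M \otimes_R -$ yields split (hence exact) short exact sequences
\begin{equation*}
    0 \to M \otimes_R K_i \to M \otimes_R P_i \to M \otimes_R K_{i-1} \to 0.
\end{equation*}
Splicing these back together along the common terms $M \otimes_R K_{i-1}$ for $i \geq 0$ produces the augmented complex
\begin{equation*}
    \cdots \to M \otimes_R P_1 \to M \otimes_R P_0 \to M \otimes_R N \to 0,
\end{equation*}
and exactness at each position reduces to exactness of the corresponding split short exact sequence.

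The argument is straightforward once the splittings are in hand; the only step deserving genuine care is the base case, where projectivity of $N$ itself (rather than merely the $P_i$) is what starts the induction. Without this hypothesis, the short exact sequence $0 \to K_{-1} \to P_0 \to N \to 0$ need not split and the higher kernels could fail to be projective, which is precisely the obstruction measured by the $\textup{Tor}$ groups.
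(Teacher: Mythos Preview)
Your argument is correct and self-contained, but it differs from the paper's route. The paper simply invokes the Tor functor: since the displayed resolution computes $\textup{Tor}_n^R(M,N)$ as the homology of the tensored complex, projectivity of $N$ forces $\textup{Tor}_n^R(M,N)=0$ for $n\geq 1$, and right exactness of $M\otimes_R-$ handles the bottom. Your approach avoids Tor entirely by proving inductively that every syzygy $K_i$ is projective and that each short exact sequence $0\to K_i\to P_i\to K_{i-1}\to 0$ splits; tensoring then preserves these splittings. This is more elementary and in fact proves slightly more (that all the syzygies are projective), whereas the paper's proof is shorter but presupposes the derived-functor machinery.

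One small slip in your final paragraph: you wrote ``the short exact sequence $0 \to K_{-1} \to P_0 \to N \to 0$,'' but since $K_{-1}=N$ by your own convention this should read $0 \to K_0 \to P_0 \to N \to 0$.
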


\begin{proof}
By a construction of the Tor functor, we have
\begin{equation*}
    \textup{Tor}_n^R(M,N)=\textup{ker}(\textup{id}_M\otimes\partial_n)/\textup{im}(\textup{id}_M\otimes\partial_{n+1})
\end{equation*}
for $n\geq 1$, and these are $0$ since $N$ is projective. The exactness at $M\otimes_RP_0$ and $M\otimes_RN$ is trivial since the functor $M\otimes_R-$ is right exact.
\end{proof}

\section{Structure trees}\label{slot}

The goal of this section is to show Proposition \ref{lot8}.

\subsection{Construction of trees}\label{slot1}

In this subsection, let $X$ be a set, and for a subset $C\subset X$, set $\overline{C}=X\setminus C$. We say that two elements $x$ and $y$ of $X$ are \textit{separated} by a subset $C\subset X$ if either $(x\in C$ and $y\notin C)$ or $(y\in C$ and $x\notin C)$ holds.

\begin{dfn}\label{lot10}
A family $\mathcal{C}$ of non-empty proper subsets of $X$ is a \textit{treeset} on $X$ if it is
\begin{enumerate}
    \item nested, i.e., for all $C,D\in\mathcal{C}$, either 
    \begin{equation*}
        C\cap D,\ C\cap \overline{D},\ \overline{C}\cap D\ \textup{or }\overline{C}\cap \overline{D}
    \end{equation*}
    is empty,
    \item closed under complementation, i.e., if $C\in\mathcal{C}$, then $\overline{C}\in \mathcal{C}$ holds, and
    \item \textit{finitely separating}, i.e., for all $x,y\in X$, we have 
    \begin{equation*}
        |\{C\in\mathcal{C}\mid x\in C,\ y\notin C\}|<\infty.
    \end{equation*}
\end{enumerate}
\end{dfn}

The \textit{structure tree} associated to a treeset is introduced by \cite{Dun1}. We will present a construction of it, which is based on \cite[Chapter II]{DD} but described from the viewpoint of ultrafilters in the sense of \cite{Rol}. The proofs are given for the reader's convenience.

Let $\mathcal{C}$ be a treeset on $X$.

\begin{dfn}\label{tre2}
Let $V_\mathcal{C}$ be the set of subsets $u\subset \mathcal{C}$ such that:
\begin{enumerate}
    \item[(U1)] For every $C\in\mathcal{C}$, we have $\left|u\cap\left\{C,\overline{C}\right\}\right|=1$.
    \item[(U2)] If $C\in u$ and $C\subset D\in\mathcal{C}$, then $D\in u$ holds.
    \item[(U3)] There is no strictly decreasing sequence $C_0\supsetneq C_1\supsetneq\cdots$ in $u$.
\end{enumerate}
Then let
\begin{equation*}
    T_\mathcal{C}=\{(u,v)\in V_\mathcal{C}\times V_\mathcal{C}\mid |u\setminus v|=1\}.
\end{equation*}
This is a graph on $V_\mathcal{C}$ since $|u\setminus v|=1$ implies $|v\setminus u|=1$ by condition (U1).
\end{dfn}

Now we will show the following:

\begin{prop}\label{tre3}
The graph $(V_\mathcal{C},T_\mathcal{C})$ is a tree.
\end{prop}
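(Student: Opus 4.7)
The plan is to introduce the pseudo-distance $d(u,v):=|u\setminus v|$ on $V_\mathcal{C}$, show that it is a finite metric equal to the graph distance $d_{T_\mathcal{C}}$, and then show that every simple path in $T_\mathcal{C}$ is a geodesic for $d$ --- whence connectedness and acyclicity both follow.

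First I would show $u\setminus v$ is a finite chain under $\subset$. For distinct $C,D\in u\setminus v$, nestedness forces one of $C\cap D$, $C\cap\overline{D}$, $\overline{C}\cap D$, $\overline{C}\cap\overline{D}$ to be empty; the case $C\cap D=\varnothing$ gives $C\subset\overline{D}$, whence (U2) in $u$ yields $\overline{D}\in u$, contradicting $D\in u$, and the case $\overline{C}\cap\overline{D}=\varnothing$ is excluded symmetrically via (U2) in $v$, so $C,D$ are comparable. Finiteness then follows because (U3) in $u$ forbids an infinite descending subchain of $u\setminus v\subset u$, while (U3) in $v$ applied to the complementary chain $\{\overline{C}:C\in u\setminus v\}\subset v$ forbids an infinite ascending subchain, and every infinite totally ordered set admits one or the other. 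The bijection $C\mapsto\overline{C}$ gives symmetry, the inclusion $u\setminus w\subset(u\setminus v)\cup(v\setminus w)$ gives the triangle inequality, and (U1) gives definiteness. Then connectedness together with $d=d_{T_\mathcal{C}}$ follows by induction on $n:=d(u,v)$: I would choose a minimal element $C$ of the finite chain $u\setminus v$ and set $w:=(u\setminus\{C\})\cup\{\overline{C}\}$, verifying $w\in V_\mathcal{C}$ directly. Condition (U3) in $w$ is immediate because any descending chain meets $\overline{C}$ at most once, and (U2) is handled by the two cases $X\in u\setminus\{C\}$ (where minimality of $C$ rules out $Y=C$) and $X=\overline{C}$ (where passing to complements reduces to (U2) in $u$ and in $v$, using once more the minimality of $C$ in $u\setminus v$).

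The main obstacle is acyclicity. I would prove by induction on $n$ that any simple path $u_0,\ldots,u_n$ in $T_\mathcal{C}$ satisfies $|S_n|=n$, where $S_k:=u_0\setminus u_k$; a closed simple walk of length $n\geq 1$ would then contradict $0=d(u_0,u_0)=n$. Letting $C_k$ be the unique cut with $u_{k-1}\setminus u_k=\{C_k\}$, the induction hypothesis gives $S_{n-1}=\{C_1,\ldots,C_{n-1}\}$, a chain of distinct cuts. The validity $u_k\in V_\mathcal{C}$ at each intermediate step, applied via (U2) to $\overline{C_k}\in u_k$, forces $C_m\not\subsetneq C_k$ for every $m>k$ --- because $C_m\subsetneq C_k$ would give $\overline{C_k}\subset\overline{C_m}$ while $\overline{C_m}\notin u_k$. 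Combined with the chain property, this produces the strictly increasing sequence $C_1\subsetneq C_2\subsetneq\cdots\subsetneq C_{n-1}$. Now if $|S_n|\neq n$, then $|S_n|=n-2$ and $C_n=\overline{C_j}$ for some $j$; simplicity excludes $j=n-1$ (backtracking), so $j\leq n-2$. The resulting $u_n$ is $u_0$ with the pairs $\{C_i,\overline{C_i}\}$ for $i\neq j$ flipped, so $C_j\in u_n$ while $C_{j+1}\notin u_n$ and $C_j\subsetneq C_{j+1}$, violating (U2) for $u_n$. The delicate part is carefully extracting these containment relations on the $C_i$'s from the (U2)-minimality conditions at each intermediate step, and this is precisely where the chain structure from the first step becomes essential.
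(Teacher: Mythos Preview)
Your proof is correct. The setup---showing $u\setminus v$ is a finite chain, defining $d(u,v)=|u\setminus v|$, and proving connectedness by induction on $d(u,v)$ via flipping a minimal element of $u\setminus v$---matches the paper's Lemmas~\ref{tre7}, \ref{tre5}, \ref{tre6} and the first half of its proof of Proposition~\ref{tre3}. Your acyclicity argument, however, takes a genuinely different route. The paper first proves a separate lemma (Lemma~\ref{tre8}): for each $C\in\mathcal{C}$ there is a \emph{unique} edge $(u,v)\in T_\mathcal{C}$ with $u\setminus v=\{C\}$. Given a simple cycle $u_0,\ldots,u_n=u_0$, it then tracks the single cut $C$ with $u_0\setminus u_1=\{C\}$: uniqueness forbids any later step from reinserting $C$, so $C\notin u_n$, contradicting $u_n=u_0$. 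Your route instead proves directly that every simple path is a geodesic, by using strong induction to identify $S_k=\{C_1,\ldots,C_k\}$, extracting the strictly increasing chain $C_1\subsetneq\cdots\subsetneq C_{n-1}$ from (U2) at the intermediate vertices, and then ruling out $C_n=\overline{C_j}$ with $j\leq n-2$ via an explicit (U2)-violation in $u_n$. Your argument is more hands-on and self-contained, avoiding the edge-uniqueness lemma entirely; the paper's is shorter once Lemma~\ref{tre8} is in hand, and that lemma is needed independently later (for the bijection $T_\mathcal{C}\leftrightarrow\mathcal{C}$ used in Lemma~\ref{tre1} and Proposition~\ref{lot8}). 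Both approaches yield $d_{T_\mathcal{C}}(u,v)=|u\setminus v|$ as a byproduct.
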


\begin{lem}\label{tre7}
For $u,v\in V_{\mathcal{C}}$, the set $u\setminus v$ is finite and totally ordered.
\end{lem}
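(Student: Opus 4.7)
The plan is to establish the two claims in sequence, each exploiting the symmetry between $u$ and $v$ afforded by condition (U1).

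First, to show $u\setminus v$ is totally ordered by inclusion, I would take $C,D\in u\setminus v$. Then $C,D\in u$ and, by (U1), $\overline{C},\overline{D}\in v$. Since $\mathcal{C}$ is nested, one of the four intersections $C\cap D$, $C\cap\overline{D}$, $\overline{C}\cap D$, $\overline{C}\cap\overline{D}$ is empty. If $C\cap D=\varnothing$, then $C\subset\overline{D}$, so (U2) applied inside $u$ forces $\overline{D}\in u$, contradicting $D\in u$ via (U1). Symmetrically, if $\overline{C}\cap\overline{D}=\varnothing$, then $\overline{C}\subset D$, so (U2) applied inside $v$ forces $D\in v$, contradicting $D\notin v$. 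The only surviving cases are $C\subset D$ and $D\subset C$.

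Next, to show finiteness, I would apply (U3) on both $u$ and $v$. Because $u\setminus v\subseteq u$, condition (U3) for $u$ forbids any infinite strictly descending chain in $u\setminus v$; combined with total orderedness, this makes $(u\setminus v,\subset)$ well-ordered. On the other hand, an infinite strictly ascending chain $C_0\subsetneq C_1\subsetneq\cdots$ in $u\setminus v$ would produce a strictly descending chain $\overline{C_0}\supsetneq\overline{C_1}\supsetneq\cdots$ in $v$, contradicting (U3) for $v$. A well-ordered set with no infinite strictly ascending chain has order type strictly below $\omega$, hence is finite.

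The key point to watch is that (U3) must be invoked on \emph{both} $u$ and $v$, with complementation via (U1) translating between them: using (U3) only on the $u$ side gives well-foundedness but is consistent with an order type such as $\omega$, so the symmetric second application is what actually delivers finiteness. Notably, the finitely separating condition in Definition \ref{lot10} plays no role in this lemma; it will be needed only later, to relate $V_{\mathcal{C}}$ back to the underlying set $X$.
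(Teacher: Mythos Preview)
Your proof is correct and follows essentially the same approach as the paper: the total-ordering step is identical (eliminate the two ``bad'' nested cases via (U2) applied in $u$ and in $v$ respectively), and the finiteness step is the same idea of ruling out both infinite descending and ascending chains via (U3) on $u$ and on $v$ (through complementation). The paper is simply terser, writing ``there exists either a strictly decreasing sequence or a strictly increasing sequence in $u\setminus v$; this contradicts (U3)'' without spelling out that the ascending case requires passing to complements in $v$---a point you make explicit, which is arguably clearer.
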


\begin{proof}
Let $C,D\in u\setminus v$. Since $\mathcal{C}$ is nested, we have either 
\begin{equation*}
    C\subset D,\ C\subset  \overline{D},\ \overline{C}\subset D\ \textup{or }\overline{C}\subset \overline{D}.
\end{equation*}
If $C\subset \overline{D}$, then we have $\overline{D}\in u$ by condition (U2), but we also have $D\in u$, which contradicts condition (U1). If $\overline{C}\subset D$, then we have $D\in v$, which is a contradiction again. Hence we have either $C\subset D$ or $D\subset C$, and thus $u\setminus v$ is totally ordered.

Suppose that $|u\setminus v|=\infty$. Then there exists either a strictly decreasing sequence or a strictly increasing sequence in $u\setminus v$. This contradicts condition (U3), and thus $u\setminus v$ is finite.
\end{proof}

\begin{lem}\label{tre5}
Let $u,v\in V_\mathcal{C}$. If $C$ is minimal in $u\setminus v$, then it is minimal in $u$. In particular, for $(u,v)\in T_\mathcal{C}$, the unique element of $u\setminus v$ is minimal in $u$.
\end{lem}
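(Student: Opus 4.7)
My plan is to argue by contradiction for the first statement, and then deduce the second statement as an immediate consequence.

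Suppose $C$ is minimal in $u \setminus v$ but not minimal in $u$. Then there is some $D \in u$ with $D \subsetneq C$. By minimality of $C$ in $u \setminus v$, the element $D$ cannot lie in $u \setminus v$, so $D \in v$. On the other hand, since $C \in u \setminus v$, condition (U1) applied to $v$ forces $\overline{C} \in v$.

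Now I would combine these facts. Since $D \subsetneq C$, we have $\overline{C} \subsetneq \overline{D}$, and so applying (U2) to $v$ (with $\overline{C} \in v$ and $\overline{C} \subset \overline{D}$) gives $\overline{D} \in v$. But $D$ also lies in $v$, contradicting (U1). This contradiction establishes that $C$ is minimal in $u$.

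For the second claim, if $(u,v) \in T_\mathcal{C}$ then $|u \setminus v| = 1$ by definition of $T_\mathcal{C}$, so the unique element of $u \setminus v$ is vacuously minimal in $u \setminus v$, hence minimal in $u$ by the first part.

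The only subtle point to watch is the direction of (U2): from $\overline{C} \subset \overline{D}$ one concludes $\overline{D} \in v$, not $\overline{C} \in v$ from $D$. The argument is otherwise a straightforward application of the three defining properties of $V_\mathcal{C}$, so I do not anticipate any real obstacle.
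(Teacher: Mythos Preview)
Your proof is correct and follows essentially the same approach as the paper's: both arguments pivot on the observation that $\overline{C}\in v$ together with $\overline{C}\subset\overline{D}$ and (U2) force $\overline{D}\in v$, and then reach a contradiction via (U1). The only cosmetic difference is that the paper phrases it as a direct argument (showing any $D\in\mathcal{C}$ with $D\subsetneq C$ must fail to lie in $u$), whereas you assume such a $D\in u$ exists and derive the contradiction from $D,\overline{D}\in v$.
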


\begin{proof}
Let $C$ be minimal in $u\setminus v$, and let $C\supsetneq D\in\mathcal{C}$. Then we have $\overline{D}\in v$ since $\overline{D}\supset\overline{C}\in v$ holds. If $D\in u$, then we have $D\in u\setminus v$, which contradicts that $C$ is minimal in $u\setminus v$. Hence we have $D\notin u$, and thus $C$ is minimal in $u$.
\end{proof}

\begin{lem}\label{tre6}
If $C_0$ is minimal in $v\in V_\mathcal{C}$, then we have $v\bigtriangleup \left\{C_0,\overline{C_0}\right\}\in V_\mathcal{C}$.
\end{lem}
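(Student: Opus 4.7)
The plan is to set $w = v \bigtriangleup \{C_0, \overline{C_0}\} = (v\setminus\{C_0\})\cup\{\overline{C_0}\}$ and verify the three defining conditions (U1), (U2), (U3) of Definition \ref{tre2} for $w$, using minimality of $C_0$ in $v$ at the key step.

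Condition (U1) is immediate: for each pair $\{C,\overline{C}\}\neq\{C_0,\overline{C_0}\}$, membership in $w$ matches membership in $v$, while for the pair $\{C_0,\overline{C_0}\}$ we have swapped which element lies inside. For (U3), any strictly decreasing chain in $w$ contains $\overline{C_0}$ at most once; after dropping that term, the remaining infinite tail lies in $v\setminus\{C_0\}$ (since everything below $\overline{C_0}$ is disjoint from $C_0$ and still belongs to $w$), giving a strictly decreasing chain in $v$ and contradicting (U3) for $v$.

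The substantive step is (U2). I would take $C\in w$ with $C\subset D\in\mathcal{C}$ and split into two cases. If $C\in v\setminus\{C_0\}$, then $D\in v$ by (U2) for $v$, and I must rule out $D=C_0$: if $D=C_0$ then $C\subsetneq C_0$ with $C\in v$, contradicting minimality of $C_0$ in $v$; hence $D\in w$. If $C=\overline{C_0}$, then $\overline{C_0}\subset D$, so $\overline{D}\subset C_0$. Since $C_0$ and $\overline{C_0}$ are disjoint and nonempty, $D\neq C_0$, and if $D=\overline{C_0}$ we are done; otherwise $\overline{D}\subsetneq C_0$, and by (U1) for $v$ either $D\in v$ or $\overline{D}\in v$. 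The latter would give an element of $v$ strictly smaller than $C_0$, contradicting minimality, so $D\in v$; since $D\neq C_0$, we conclude $D\in w$.

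The main obstacle is exactly this second subcase of (U2): one needs both the nested/complementation structure and the minimality hypothesis to simultaneously block $\overline{D}$ from living below $C_0$ in $v$. Once that is handled, everything else is bookkeeping on symmetric differences.
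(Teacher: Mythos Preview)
Your proof is correct and follows essentially the same route as the paper: verify (U1)--(U3) for $w=v\bigtriangleup\{C_0,\overline{C_0}\}$, with (U2) handled by the same two-case split, using minimality of $C_0$ to rule out both $D=C_0$ (first case) and $\overline{D}\in v$ (second case). The only cosmetic differences are that the paper assumes $C\subsetneq D$ from the outset (disposing of $C=D$ as trivial) and declares (U1), (U3) clear without the chain argument you spell out.
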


\begin{proof}
Let $C_0$ be minimal in $v\in V_\mathcal{C}$, and set $u=v\bigtriangleup \left\{C_0,\overline{C_0}\right\}$. It is clear that $u$ satisfies conditions (U1) and (U3).

Suppose that $C\in u$ and $C\subsetneq D\in\mathcal{C}$. If $C\in v$, then we have $D\in v$, and also have $D\neq C_0$ since $C_0$ is minimal in $v$. If $C=\overline{C_0}$, then we have $\overline{D}\subsetneq C_0$, which implies that $D\in v$ by the minimality of $C_0$. Hence we have $D\in u$ in any case, and condition (U2) is proved.
\end{proof}

\begin{lem}\label{tre8}
For every $C\in\mathcal{C}$, there exists a unique $(u,v)\in T_\mathcal{C}$ such that $u\setminus v=\{C\}$.
\end{lem}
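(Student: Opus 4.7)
My plan is to construct the unique edge explicitly from $C$, using the ``ultrafilter'' viewpoint on elements of $V_\mathcal{C}$. For existence, define
\begin{equation*}
    u_C = \left\{D\in\mathcal{C} \:\middle|\: D\supseteq C \text{ or } \overline{D}\subsetneq C\right\}.
\end{equation*}
The two defining alternatives are mutually exclusive, for otherwise $C\subseteq D$ and $\overline{D}\subseteq C\subseteq D$ would force $\overline{D}=\varnothing$. A case analysis using nestedness of $\mathcal{C}$ then shows that for each unordered pair $\{D,\overline{D}\}$ with $D\in\mathcal{C}$, exactly one member lies in $u_C$, which is condition (U1). Condition (U2) follows immediately from transitivity of $\supseteq$: if $E\supseteq C$ and $E\subseteq F$ then $F\supseteq C$, while if $\overline{E}\subsetneq C$ and $E\subseteq F$ then $\overline{F}\subseteq\overline{E}\subsetneq C$.

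Condition (U3) is the main obstacle, as it is the only place where the finite separation property of $\mathcal{C}$ enters. Given a strictly decreasing sequence $E_0\supsetneq E_1\supsetneq\cdots$ in $u_C$, I would first argue that once some $E_{j_0}$ satisfies the alternative $\overline{E_{j_0}}\subsetneq C$, every subsequent $E_j$ does too: otherwise $E_j\supseteq C$ combined with $E_j\subsetneq E_{j_0}$ gives $E_{j_0}\supseteq C$, which together with $\overline{E_{j_0}}\subsetneq C$ forces $E_{j_0}=X$. Hence either every $E_j$ contains $C$, in which case any $x\in C$ and $y\in E_0\setminus E_1$ are separated by each $E_j$ with $j\geq 1$; or from some index $j_0$ on the second alternative holds, in which case $\overline{C}\subseteq E_j$ for every such $j$ while any $z\in E_{j_0}\setminus E_{j_0+1}\subseteq\overline{E_{j_0+1}}\subsetneq C$ is outside $E_j$ for all $j\geq j_0+1$, so any $w\in\overline{C}$ is separated from $z$ by infinitely many $E_j$. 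Both alternatives contradict finite separation.

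It remains to observe that $C$ is minimal in $u_C$: any $D\in\mathcal{C}$ with $D\subsetneq C$ fails $D\supseteq C$, and $\overline{D}\subsetneq C$ would give $\overline{C}\subseteq D\subseteq C$, forcing $C=X$, which is excluded. Lemma \ref{tre6} then provides $v_C:=u_C\bigtriangleup\{C,\overline{C}\}\in V_\mathcal{C}$, giving $(u_C,v_C)\in T_\mathcal{C}$ with $u_C\setminus v_C=\{C\}$. For uniqueness, suppose $(u',v')\in T_\mathcal{C}$ satisfies $u'\setminus v'=\{C\}$, so $C$ is minimal in $u'$ by Lemma \ref{tre5}. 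For any $D\in u'$ with $D\neq C$, nestedness places $(C,D)$ in one of four cases: $C\cap D=\varnothing$ gives $C\subseteq\overline{D}$, which by (U2) puts $\overline{D}\in u'$, contradicting (U1); $D\subsetneq C$ contradicts minimality of $C$ in $u'$; the remaining two cases yield $C\subseteq D$ or $\overline{D}\subsetneq C$, hence $D\in u_C$. Therefore $u'\subseteq u_C$, which (U1) upgrades to $u'=u_C$, whence $v'=v_C$.
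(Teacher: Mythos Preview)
Your proof is correct and follows essentially the same route as the paper's: you define the same candidate $u_C$ (the paper writes the second alternative as $\overline{C}\subsetneq D$, which is equivalent to your $\overline{D}\subsetneq C$), verify (U1)--(U3) and minimality of $C$ in $u_C$, invoke Lemma~\ref{tre6} for existence, and then prove uniqueness by showing any $u'$ in which $C$ is minimal coincides with $u_C$ via (U1). The only cosmetic differences are in the bookkeeping for (U3)---your case split ``eventually second type vs.\ always first type'' is slightly more elaborate than the paper's, which directly argues the sequence must stay in a single type throughout---and in the direction of the inclusion checked for uniqueness, but the substance is identical.
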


\begin{proof}
Let $C\in\mathcal{C}$. By Lemmas \ref{tre5} and \ref{tre6}, it suffices to show that there exists a unique $u\in V_\mathcal{C}$ such that $C$ is minimal in $u$. We set
\begin{equation*}
    u=\left\{D\in\mathcal{C} \:\middle|\: C\subset D\textup{ or }\overline{C}\subsetneq D\right\}.
\end{equation*}
Then $u$ satisfies the conditions (U1) and (U2). Indeed, the former follows from that $\mathcal{C}$ is nested, and the latter is clear. Now let $D_0\supsetneq D_1\supsetneq\cdots$ be a strictly decreasing sequence in $u$. Then we have either $C\subset D_n$ for every $n$, or $\overline{C}\subsetneq D_n$ for every $n$. We assume the former case, and take $x\in C$ and $y\in \overline{D_0}$. Then $x\in D_n$ and $y\notin D_n$ hold for every $n$, which contradicts that $\mathcal{C}$ is finitely separating. In the same way, we can deduce a contradiction in the latter case. Hence $u$ satisfies condition (U3), and thus $u\in V_\mathcal{C}$.

Now $C$ is minimal in $u$. Indeed, if $C\supsetneq D\in \mathcal{C}$, then we have $\overline{C}\subsetneq \overline{D}$ and thus $\overline{D}\in u$. Finally, we will verify the uniqueness. Suppose that $C$ is minimal in $v\in V_\mathcal{C}$. For $D\in \mathcal{C}$, if $C\subset D$, then $D\in v$ holds. If $\overline{C}\subsetneq D$, then we have $D\in v$ by $\overline{D}\subsetneq C$ and the minimality of $C$. Hence we have $u\subset v$, which implies $u=v$ since both satisfy condition (U1). 
\end{proof}

\begin{proof}[Proof of Proposition \ref{tre3}]
First, we show that the graph $(V_\mathcal{C}, T_\mathcal{C})$ is connected. Let $u,v\in V_\mathcal{C}$. We prove that there exists a $T_\mathcal{C}$-path from $u$ to $v$ by the induction on $|u\setminus v|$. We may assume $u\neq v$ and let $C$ be the minimal element of $u\setminus v$, which exists by Lemma \ref{tre7}. Then by Lemmas \ref{tre5} and \ref{tre6}, $u'=u\bigtriangleup\{C,\overline{C}\}$ satisfies $(u,u')\in T_\mathcal{C}$ and $|u'\setminus v|=|u\setminus v|-1$. By the induction hypothesis, there exists a $T_\mathcal{C}$-path from $u'$ to $v$, and thus the claim is proved.

Next, we show that $(V_\mathcal{C}, T_\mathcal{C})$ is acyclic. Let $\{u_i\}_{i=0}^n$ be a cycle of $T_\mathcal{C}$ with $n\geq 2$ such that the path $\{u_i\}_{i=0}^{n-1}$ is simple. Let $u_0\setminus u_1=\{C\}$. Then by Lemma \ref{tre8}, for $1\leq i\leq n-1$, we have $u_{i+1}\setminus u_i\neq \{C\}$ since $(u_{i+1},u_i)\neq (u_0,u_1)$. This implies that $C\notin u_i$ for $1\leq i\leq n$ by the induction on $i$, which contradicts that $u_n=u_0$. Hence this graph is a tree.
\end{proof}

\begin{rmk}\label{tre4}
By the above proof, we have $d_{T_\mathcal{C}}(u,v)=|u\setminus v|$ for $u,v\in V_\mathcal{C}$.
\end{rmk}

Finally we will define a map $\rho:X\rightarrow V_\mathcal{C}$.

\begin{prop}\label{tre9}
For $x\in X$, let $\rho(x)=\{C\in\mathcal{C}\mid x\in C\}$. Then the following conditions hold:
\begin{enumerate}
    \item $\rho(x)\in V_\mathcal{C}$ for every $x\in X$.
    \item $d_{T_\mathcal{C}}(\rho(x),\rho(y))=|\{C\in\mathcal{C}\mid x\in C,\ y\notin C\}|$ for every $x,y\in X$.
\end{enumerate}
\end{prop}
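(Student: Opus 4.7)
The plan is to verify (i) by checking the three ultrafilter axioms (U1)--(U3) directly from the definition of $\rho(x)$, and then to derive (ii) as an immediate consequence of Remark \ref{tre4}.

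For (i), conditions (U1) and (U2) are essentially tautological from the construction. Condition (U1) holds because for any $C \in \mathcal{C}$, exactly one of the statements $x \in C$ and $x \in \overline{C}$ is true, so $\rho(x)$ meets $\{C, \overline{C}\}$ in exactly one element. Condition (U2) holds because if $x \in C \subset D$, then $x \in D$, so $D \in \rho(x)$. The interesting part is (U3). Suppose, for contradiction, that there is a strictly decreasing sequence $C_0 \supsetneq C_1 \supsetneq \cdots$ with every $C_n \in \rho(x)$. Since $C_0$ is a proper subset of $X$ (treesets consist of proper subsets by Definition \ref{lot10}), the complement $\overline{C_0}$ is nonempty; pick any $y \in \overline{C_0}$. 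Then $y \notin C_n$ for every $n \geq 0$ while $x \in C_n$ for every $n \geq 0$, so the set $\{C \in \mathcal{C} \mid x \in C,\ y \notin C\}$ contains the infinite family $\{C_n\}_{n \geq 0}$. This contradicts the finitely separating property (condition (iii) of Definition \ref{lot10}), and so (U3) holds.

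For (ii), I would simply observe the set-theoretic identity
\begin{equation*}
    \rho(x) \setminus \rho(y) = \{C \in \mathcal{C} \mid x \in C\} \setminus \{C \in \mathcal{C} \mid y \in C\} = \{C \in \mathcal{C} \mid x \in C,\ y \notin C\},
\end{equation*}
and then apply Remark \ref{tre4}, which gives $d_{T_\mathcal{C}}(\rho(x), \rho(y)) = |\rho(x) \setminus \rho(y)|$. This yields the desired equality.

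The only real obstacle is (U3), where one has to invoke the finitely separating property in the right way; once one notices that the complement of $C_0$ supplies the required witness $y$, the argument is immediate. Everything else is bookkeeping.
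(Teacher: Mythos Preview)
Your proof is correct and follows essentially the same approach as the paper: (U1) and (U2) are dismissed as immediate, (U3) is handled by picking $y\in\overline{C_0}$ and invoking the finitely separating property, and (ii) is deduced directly from Remark~\ref{tre4} via the identity $\rho(x)\setminus\rho(y)=\{C\in\mathcal{C}\mid x\in C,\ y\notin C\}$.
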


\begin{proof}
(i) It is clear that $u=\rho(x)$ satisfies conditions (U1) and (U2). Let $C_0\supsetneq C_1\supsetneq\cdots$ be a strictly decreasing sequence in $\rho(x)$. Take $y\in \overline{C_0}$. Then we have $x\in C_n$ and $y\notin C_n$ for every $n$, which contradicts that $\mathcal{C}$ is finitely separating. Hence we have $\rho(x)\in V_\mathcal{C}$.

(ii) This follows from Remark \ref{tre4}.
\end{proof}

\subsection{Borel cutsets of Borel graphs}

Let $(X,G)$ be a Borel graph with uniformly bounded degrees. Set
\begin{equation}
    d=\sup_{x\in X}|\{y\in X\mid (x,y)\in G\}|. \label{eqtre3}
\end{equation}
Let $\textup{Cut}(G)$ be the set of cuts of $G$. This is identified with the set
\begin{equation*}
    \left\{\partial_\textup{oe}^GC \:\middle|\: C\in\textup{Cut}(G) \right\},
\end{equation*}
which is a Borel subset of the set of finite subsets of $G$. Hence $\textup{Cut}(G)$ is regarded as a standard Borel space.

\begin{dfn}
A Borel subset of $\textup{Cut}(G)$ is called a \textit{Borel cutset} of $G$. A Borel cutset $\mathcal{C}$ of $G$ is called a \textit{Borel treeset} of $G$ if
\begin{equation*}
    \mathcal{C}_\omega=\{C\subset\omega\mid C\in\mathcal{C}\}
\end{equation*}
is a treeset on $\omega$ for every $E_G$-class $\omega$.
\end{dfn}

\begin{lem}\label{lot4}
Let $\mathcal{C}$ be a Borel cutset of $G$ with uniformly bounded boundaries, i.e.,
\begin{equation*}
    r:=\sup_{C\in\mathcal{C}}\textup{diam}_G\mleft(\partial_\textup{iv}^G C\mright)<\infty.
\end{equation*}
Then we have
\begin{equation*}
    \sup_{x\in X}\left|\left\{C\in\mathcal{C} \:\middle|\:  x\in\partial_\textup{iv}^GC\right\}\right|<\infty.
\end{equation*}
\end{lem}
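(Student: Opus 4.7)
The plan is to turn the hypothesis into a uniform counting bound on the cuts whose inner vertex boundary passes through a given vertex. Fix $x\in X$ and set $\mathcal{C}_x=\{C\in\mathcal{C}\mid x\in\partial_\textup{iv}^GC\}$. The goal is to bound $|\mathcal{C}_x|$ by a constant depending only on the degree bound $d$ from \eqref{eqtre3} and the diameter bound $r$.

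First, for each $C\in\mathcal{C}_x$ the set $\partial_\textup{iv}^GC$ contains $x$ and has $d_G$-diameter at most $r$, so $\partial_\textup{iv}^GC\subseteq B_G(r;x)$. The uniform degree bound forces $|B_G(r;x)|\leq N$ for some $N=N(d,r)$ depending only on $d$ and $r$ (any standard ball-counting estimate such as $N\leq 1+d+d(d-1)+\cdots+d(d-1)^{r-1}$ will do), and this bound is independent of the choice of $x$.

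Next, I invoke the observation made right after the definition of a cut: each cut $C$ is determined by its outer edge boundary $\partial_\textup{oe}^GC=(C\times\overline{C})\cap G$. For any $(y,z)\in\partial_\textup{oe}^GC$, the first coordinate $y$ lies in $\partial_\textup{iv}^GC\subseteq B_G(r;x)$, so $\partial_\textup{oe}^GC$ is contained in the finite edge set $E(x)=G\cap(B_G(r;x)\times X)$, which by the degree bound has at most $dN$ elements. Hence $C\mapsto\partial_\textup{oe}^GC$ is an injection from $\mathcal{C}_x$ into the power set of $E(x)$, giving $|\mathcal{C}_x|\leq 2^{dN}$. Since $2^{dN}$ does not depend on $x$, taking the supremum over $x\in X$ yields the lemma.

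There is no deep obstacle here; the only step that demands care is ensuring one uses the \emph{outer} edge boundary rather than the inner one, so that the first coordinates of its elements lie in $\partial_\textup{iv}^GC$ and therefore in the ball $B_G(r;x)$. Once that bookkeeping is correct the argument reduces to a straightforward local counting estimate combining the degree bound with the fact that a cut is encoded by a finite set of edges near its inner boundary.
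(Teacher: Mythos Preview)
Your proof is correct and follows essentially the same approach as the paper's: both arguments observe that $\partial_\textup{iv}^GC\subseteq B_G(r;x)$ forces $\partial_\textup{oe}^GC\subseteq G\cap(B_G(r;x)\times X)$, bound the size of this edge set via the degree bound, and then use that a cut is determined by $\partial_\textup{oe}^GC$ to obtain the power-set estimate $2^{dN}$. The paper simply plugs in the explicit ball bound $N\leq d^{r+1}$ to arrive at $2^{d^{r+2}}$, but the structure of the argument is identical.
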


\begin{proof}
Fix $x\in X$. If $C\in \mathcal{C}$ and $x\in\partial_\textup{iv}^GC$, then $\partial_\textup{oe}^GC\subset \mleft(B_G(r;x)\times X\mright)\cap G$. By equation \eqref{eqtre3}, we have
\begin{equation*}
    |B_G(r;x)|\leq \sum_{i=0}^r d^i\leq d^{r+1}
\end{equation*}
and thus $|\mleft(B_G(r;x)\times X\mright)\cap G|\leq d\cdot d^{r+1}= d^{r+2}$.
Hence we have
\begin{equation*}
    \left|\left\{C\in\mathcal{C} \:\middle|\: x\in\partial_\textup{iv}^GC\right\}\right|\leq \left|\left\{\partial_\textup{oe}^GC\subset\mleft(B_G(r;x)\times X\mright)\cap G \:\middle|\:  C\in\mathcal{C}\right\}\right|\leq 2^{d^{r+2}},
\end{equation*}
and the lemma is proved.
\end{proof}

\begin{lem}\label{lot1}
If a Borel cutset $\mathcal{C}$ of $G$ with uniformly bounded boundaries is nested and closed under complementation, then it is a Borel treeset of $G$.
\end{lem}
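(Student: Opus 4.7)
The approach is to observe that, since $\mathcal{C}$ is already assumed to be Borel, nested, and closed under complementation, the only non-trivial axiom of Definition~\ref{lot10} that remains to be verified on each class $\mathcal{C}_\omega$ is condition (iii), the finite-separation property. Nestedness and closure under complementation descend automatically to each $E_G$-class $\omega$ because both are conditions on cuts lying on a common $E_G$-class, and the complement of $C\subset\omega$ inside $\omega$ agrees with the opposite-side cut $\overline{C}$ appearing in Notation~\ref{lot6}.

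To establish finite separation, I would fix an $E_G$-class $\omega$ and two points $x,y\in\omega$, and choose any finite $G$-path $x=z_0,z_1,\ldots,z_n=y$ (which exists since $(x,y)\in E_G$). For every cut $C\in\mathcal{C}_\omega$ with $x\in C$ and $y\notin C$, the path must cross $\partial_\textup{ie}^G C$, so there is a smallest index $i(C)\in\{0,\ldots,n-1\}$ with $z_{i(C)}\in C$ and $z_{i(C)+1}\notin C$; in particular $z_{i(C)}\in\partial_\textup{iv}^G C$. Assigning to each separating cut $C$ the index $i(C)$ produces a map
\begin{equation*}
    \Phi:\left\{C\in\mathcal{C}_\omega \:\middle|\: x\in C,\ y\notin C\right\}\longrightarrow\{0,1,\ldots,n-1\}
\end{equation*}
whose fiber over $i$ is contained in $\left\{C\in\mathcal{C} \:\middle|\: z_i\in\partial_\textup{iv}^G C\right\}$. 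By Lemma~\ref{lot4}, the latter set has cardinality bounded by a constant $M$ independent of $z_i$, so the cardinality of the domain of $\Phi$ is at most $nM<\infty$. This proves that $\mathcal{C}_\omega$ is finitely separating.

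The main (and essentially only) substantive step is the path-counting argument in the second paragraph, whose sole nontrivial input is Lemma~\ref{lot4}; everything else is routine unpacking of definitions, so I do not anticipate any real obstacle.
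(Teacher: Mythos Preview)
Your proof is correct and follows essentially the same approach as the paper: both reduce to the finite-separation condition, fix a $G$-path from $x$ to $y$, observe that any separating cut must contain some path vertex in its inner vertex boundary, and invoke Lemma~\ref{lot4} to conclude. Your version is slightly more explicit in defining the map $\Phi$ via the first crossing index, but the argument is the same.
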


\begin{proof}
It suffices to show that for $(x,y)\in E_G$, we have
\begin{equation*}
    |\{C\in\mathcal{C}\mid x\in C,\ y\notin C\}|<\infty.
\end{equation*}
Take a $G$-path $\{x_i\}_{i=0}^n$ from $x$ to $y$. If $C\in\mathcal{C}$ satisfies $x\in C$ and $y\notin C$, then we have $x_i\in \partial_\textup{iv}^GC$ for some $0\leq i\leq n-1$. By the previous lemma, there exist only finitely many $C\in\mathcal{C}$ satisfying this property. Hence the lemma is proved.
\end{proof}

The following is inspired by the proof of \cite[Proposition 3.3]{CPTT}:

\begin{lem}\label{lot7}
Let $\mathcal{C}$ be a Borel cutset of $G$ with uniformly bounded boundaries closed under complementation. Then there exist finitely many Borel treesets $\mathcal{C}_i\ (i=0,1,...,n)$ of $G$ such that $\mathcal{C}=\bigsqcup_{i=0}^n\mathcal{C}_i$.
\end{lem}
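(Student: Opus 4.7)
The plan is to set this up as a finite Borel proper coloring problem. Let $r=\sup_{C\in\mathcal{C}}\textup{diam}_G\mleft(\partial_\textup{iv}^GC\mright)$ and consider the Borel crossing graph $\mathcal{H}$ on $\mathcal{C}$, where $C\sim_\mathcal{H}D$ iff $\{C,\bar C\}\neq\{D,\bar D\}$ and $C$ and $D$ cross. I will show $\mathcal{H}$ has uniformly bounded degree, apply the Kechris--Solecki--Todorcevic theorem to obtain a finite Borel proper coloring, and then symmetrize it under the complementation involution $\sigma\colon C\mapsto\bar C$. The resulting color classes should be Borel cutsets that are nested and closed under complementation, hence Borel treesets by Lemma \ref{lot1}.

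The key geometric step is a closeness lemma: if $C,D\in\mathcal{C}$ cross and are non-complementary, then $d_G\mleft(\partial_\textup{iv}^GC,\partial_\textup{iv}^GD\mright)\leq r$. I would prove this by cases on the distribution of $\partial_\textup{iv}^GC$ across $D$ and $\bar D$, and symmetrically for $\partial_\textup{iv}^GD$. If $\partial_\textup{iv}^GC$ has vertices on both sides of $D$, then any $G$-geodesic between them has length $\leq r$ and must traverse an edge of $\partial_\textup{ie}^GD$, placing a vertex of $\partial_\textup{iv}^GD$ within $B_G\mleft(r;\partial_\textup{iv}^GC\mright)$; the case when $\partial_\textup{iv}^GD$ has vertices on both sides of $C$ is symmetric. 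Otherwise $\partial_\textup{iv}^GC$ and $\partial_\textup{iv}^GD$ each lie entirely on one side of the other cut. The ``mixed'' configurations (e.g.\ $\partial_\textup{iv}^GC\subset D$ and $\partial_\textup{iv}^GD\subset\bar C$) I expect to rule out outright: they force every $D$-to-$\bar D$ transition edge to lie in $\bar C$, so each $G$-connected component of $C$ is constant on the $D/\bar D$ side, and combined with $\partial_\textup{iv}^GC\subset D$ this gives $C\subset D$, contradicting crossing. The ``aligned'' configurations reduce to $\partial_\textup{iv}^GC\subset D$ and $\partial_\textup{iv}^GD\subset C$; picking $z\in\bar C\cap\bar D$ (non-empty by crossing) and letting $L$ be the $\bar D$-connected component of $z$, the hypotheses should force $L$ to simultaneously be the $\bar C$-connected component of $z$, so any outside-neighbor $v$ of $L$ lies in both $C$ (by $\bar C$-maximality) and $D$ (by $\bar D$-maximality), and hence $v\in\partial_\textup{iv}^GC\cap\partial_\textup{iv}^GD$, giving distance $0$.

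Granted the closeness lemma, the degree of $\mathcal{H}$ at each $C$ is bounded by the number of $D\in\mathcal{C}$ with $\partial_\textup{iv}^GD$ meeting $B_G\mleft(r;\partial_\textup{iv}^GC\mright)$; since $\mleft|\partial_\textup{iv}^GC\mright|\leq d^{r+1}$ and so $\mleft|B_G\mleft(r;\partial_\textup{iv}^GC\mright)\mright|\leq d^{2r+1}$ (with $d$ the degree bound of $G$), Lemma \ref{lot4} caps this at $d^{2r+1}\cdot N$ for a uniform $N$. The Kechris--Solecki--Todorcevic theorem then provides a Borel proper coloring of $\mathcal{H}$ with finitely many colors. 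To enforce closure under complementation, I will pick a Borel transversal $\mathcal{C}^+$ of $\sigma$ (which exists because $\sigma$ generates a smooth Borel equivalence relation with finite classes), color the restriction of $\mathcal{H}$ to $\mathcal{C}^+$, and extend by $\sigma$-equivariance so that $\bar C$ inherits the color of $C$. Properness is preserved because crossing is invariant under complementation on either side. The color classes $\mathcal{C}_0,\ldots,\mathcal{C}_n$ are then Borel cutsets of $G$ inheriting the uniform boundary bound from $\mathcal{C}$, nested, and closed under complementation, so Lemma \ref{lot1} finishes. The main obstacle is the case analysis in the closeness lemma.
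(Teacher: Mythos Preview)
Your approach is essentially the paper's: build the crossing graph on $\mathcal{C}$, bound its degree uniformly, apply the Kechris--Solecki--Todorcevic coloring theorem on a Borel transversal $\mathcal{C}^+$ for the complementation involution, and symmetrize to get the $\mathcal{C}_i$; Lemma~\ref{lot1} then finishes. The only substantive difference is in the geometric step. You prove the closeness lemma $d_G\bigl(\partial_\textup{iv}^GC,\partial_\textup{iv}^GD\bigr)\leq r$ by a case analysis on how $\partial_\textup{iv}^GC$ and $\partial_\textup{iv}^GD$ sit relative to the other cut; the paper instead works with the \emph{full} vertex boundary $\partial_\textup{iv}^GC\cup\partial_\textup{ov}^GC$ (still of uniformly bounded diameter, since $\mathcal{C}$ is closed under complementation) and argues directly that if $x\in\partial_\textup{iv}^GC\cup\partial_\textup{ov}^GC$ and $y\in\partial_\textup{iv}^GD\cup\partial_\textup{ov}^GD$ satisfy $B_G(r;x)\cap B_G(r;y)=\varnothing$, then $C$ and $D$ are nested. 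The trick is that $B_G(r;y)$ is connected and misses the entire boundary of $C$, hence lies on one side of $C$, say $\overline{C}$; then $C\cup B_G(r;x)$ is connected and misses the entire boundary of $D$, forcing $C\subset D$ or $C\subset\overline{D}$. This two-line connectedness argument replaces all four of your subcases. Your case analysis is correct (each ``mixed'' case indeed contradicts crossing, and the ``aligned'' case does produce a common boundary vertex), but the paper's route is considerably shorter.
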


\begin{proof}
Set 
\begin{equation*}
    r=\sup_{C\in\mathcal{C}}\textup{diam}_G\mleft(\partial_\textup{iv}^GC\cup\partial_\textup{ov}^GC\mright).
\end{equation*}
Let $C,D\in\mathcal{C}$ be cuts on the same $E_G$-class, and let $x\in \partial_\textup{iv}^GC\cup\partial_\textup{ov}^GC$ and $y\in\partial_\textup{iv}^GD\cup\partial_\textup{ov}^GD$. We claim that if
\begin{equation*}
    B_G(r;x)\cap B_G(r;y)=\varnothing,
\end{equation*}
then $C$ and $D$ are nested with each other. Indeed, since $B_G(r;y)$ is connected and does not intersect $\partial_\textup{iv}^GC\cup\partial_\textup{ov}^GC\subset B_G(r;x)$, we have $B_G(r;y)\subset C$ or $B_G(r;y)\subset \overline{C}$. By replacing $C$ by $\overline{C}$ if needed, we may assume that $B_G(r;y)\subset \overline{C}$. Then since $C\cup B_G(r;x)$ is connected and does not intersect $\partial_\textup{iv}^GD\cup\partial_\textup{ov}^GD\subset B_G(r;y)$, we have $C\subset D$ or $C\subset \overline{D}$, and thus $C$ and $D$ are nested with each other.

Hence the number of cuts in $\mathcal{C}$ non-nested with $C$ is bounded by
\begin{align*}
    &\sum_{y\in B_G(2r;x)}\left|\left\{D\in\mathcal{C}\:\middle|\:  y\in\partial_\textup{iv}^GD\cup\partial_\textup{ov}^GD\right\}\right|\\
    &\leq|B_G(2r;x)|\cdot2\sup_{y\in X}\left|\left\{D\in\mathcal{C}\:\middle|\:   y\in\partial_\textup{iv}^GD\right\}\right| \notag \\ 
    &\leq 2d^{2r+1}\sup_{y\in X}\left|\left\{D\in\mathcal{C}\:\middle|\:  y\in\partial_\textup{iv}^GD\right\}\right|=:N .
\end{align*}
Take Borel subset $\mathcal{C}^+\subset\mathcal{C}$ so that $\left|\left\{C,\overline{C}\right\}\cap\mathcal{C}^+\right|=1$ for every $C\in\mathcal{C}$. By \cite[Proposition 4.6]{KST}, the Borel graph
\begin{equation*}
    \bigsqcup_{\omega\in X/E_G} \left\{(C,D)\in \mathcal{C}^+_\omega\times\mathcal{C}^+_\omega \:\middle|\: C\textup{ and }D\textup{ are non-nested with each other} \right\}
\end{equation*}
admits an $(N+1)$-Borel coloring $\mathcal{C}^+=\bigsqcup_{i=0}^N \mathcal{C}^+_i$, that is, each $\mathcal{C}^+_i$ is a nested Borel cutset. Then $\mathcal{C}_i=\left\{C,\overline{C}\mid C\in\mathcal{C}^+_i\right\}$ is a Borel nested cutset with uniformly bounded boundaries, closed under complementation. Hence these are Borel treesets by Lemma \ref{lot1}, and satisfy $\mathcal{C}=\bigcup_{i=0}^n\mathcal{C}_i$.
\end{proof}

Recall Definition \ref{intr3} and Notation \ref{pre5}.

\begin{lem}\label{lot11}
Let $\mathcal{C}$ be a Borel cutset of $G$ with uniformly bounded boundaries. 
\begin{enumerate}
    \item If $(Y,G')$ is a Borel graph with uniformly bounded degrees and $\lambda:(Y,G')\to (X,G)$ is a Borel quasi-isometry, then ${\lambda^{-1}(\mathcal{C})}^*$ is a Borel cutset of $G'$ with uniformly bounded boundaries, where
    \begin{equation*}
    {\lambda^{-1}(\mathcal{C})}=\left\{\lambda^{-1}(C)\mid C\in\mathcal{C}\right\}.
    \end{equation*}
    \item If $H$ is a subgraph of $G$, then $\mathcal{C}{|_H}^*$ is a Borel cutset of $G$ with uniformly bounded boundaries, where
    \begin{equation*}
        \mathcal{C}{|_H}=\{C\cap \omega\mid C\in\mathcal{C},\ \omega\in X/E_H\}.
    \end{equation*}
\end{enumerate}
\end{lem}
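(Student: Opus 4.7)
The plan is to handle (i) and (ii) separately. Borel measurability of both cutsets is routine: $C\mapsto\lambda^{-1}(C)$ is Borel because $\lambda$ is, the operation $(C,\omega)\mapsto C\cap\omega$ is Borel in both arguments, and being a cut is a Borel property of $\textup{Cut}(G)$ and $\textup{Cut}(G')$. I will leave these verifications tacit and focus on the uniform diameter bound on the inner vertex boundary, which is the real content of the lemma. Throughout, set $r_0:=\sup_{C\in\mathcal{C}}\textup{diam}_G\mleft(\partial_\textup{iv}^GC\mright)<\infty$.

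For (i), fix quasi-isometry constants $l\geq 1$, $c\geq 0$ for $\lambda$. For any $C\in\mathcal{C}$ with $\lambda^{-1}(C)\in\textup{Cut}(G')$ and any $y_1,y_2\in\partial_\textup{iv}^{G'}\lambda^{-1}(C)$, each $y_i$ has a $G'$-neighbor $y_i'$ with $\lambda(y_i)\in C$ and $\lambda(y_i')\notin C$. The upper quasi-isometry bound gives $d_G(\lambda(y_i),\lambda(y_i'))\leq l+c$, and any $G$-geodesic between them meets $\partial_\textup{iv}^GC$, so each $\lambda(y_i)$ lies within $G$-distance $l+c$ of that boundary. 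Hence $d_G(\lambda(y_1),\lambda(y_2))\leq r_0+2(l+c)$, and the lower quasi-isometry bound converts this into a uniform bound on $d_{G'}(y_1,y_2)$. This part is straightforward.

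For (ii), fix $C\in\mathcal{C}$ and an $E_H$-class $\omega$, contained in some $E_G$-class $\omega^G$, such that $C\cap\omega\in\textup{Cut}(G)$. Every $y\in\partial_\textup{iv}^G(C\cap\omega)$ admits a witnessing $G$-neighbor $y'$ either in $\omega\setminus C$ (Case (A)) or in $\omega^G\setminus\omega$ (Case (B)). In Case (A) we have $y\in\partial_\textup{iv}^GC$, so those vertices have $G$-diameter at most $r_0$. In Case (B), if moreover $y'\in\overline{C}$ then again $y\in\partial_\textup{iv}^GC$; the residual situation is thus $y\in C\cap\omega$ with a $G$-neighbor $y'\in C\cap(\omega^G\setminus\omega)$, and in either subcase each Case (B) vertex contributes a distinct edge to the finite set $\partial_\textup{oe}^G(C\cap\omega)$. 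The main obstacle will be a uniform $G$-diameter bound on the residual Case (B) vertices, as well as their $G$-distance to Case (A) vertices, taken over all admissible $(C,\omega)$. I would attack this by combining the finiteness of $\partial_\textup{oe}^G(C\cap\omega)$ with the uniform degree bound on $G$, and by invoking the cardinality estimate of Lemma~\ref{lot4} applied to $\mathcal{C}$ to confine such vertices to a $G$-bounded neighborhood of $\partial_\textup{iv}^GC$, yielding the desired uniform bound.
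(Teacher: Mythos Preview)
Your treatment of (i) is correct and is essentially the paper's argument: both confine $\partial_\textup{iv}^{G'}\lambda^{-1}(C)$ to $\lambda^{-1}$ of a bounded $G$-neighbourhood of $\partial_\textup{iv}^{G}C$ and then invoke the lower quasi-isometry inequality to pass to a $d_{G'}$-bound.

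Part (ii), however, has a genuine gap caused by computing the wrong boundary. Despite the wording of the statement, $\mathcal{C}|_H^{\,*}$ is to be read as a cutset of $H$: the paper's own Borelness argument for (ii) lands in $\operatorname{Cut}(H)$, Notation~\ref{pre5} applied to $\mathcal{C}|_H$ naturally gives $\mathcal{C}|_H\cap\operatorname{Cut}(H)$, and every downstream use (e.g.\ in the proof of Theorem~\ref{mos3}) treats it as a treeset of the subgraph. The relevant boundary is therefore $\partial_\textup{iv}^{H}(C\cap\omega)$, and since any $H$-edge from $C\cap\omega$ to $\omega\setminus(C\cap\omega)=\omega\cap\overline{C}$ is in particular a $G$-edge from $C$ to $\overline{C}$, one has $\partial_\textup{iv}^{H}(C\cap\omega)\subset\partial_\textup{iv}^{G}C$; this is why the paper calls the bound ``clear''. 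Your Case~(B) vertices, produced by $G$-edges leaving the $E_H$-class $\omega$, simply do not appear in $\partial_\textup{iv}^{H}(C\cap\omega)$.

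Moreover, your proposed attack on Case~(B) cannot be salvaged, because the quantity you are trying to bound is genuinely unbounded. Take the disjoint union over $N\geq 2$ of $X_N=\{0,1,\ldots,N,*_N\}$ with $G$-edges $(i,i+1)$ for $0\leq i<N$ together with $(N,*_N)$, and let $H$ consist of the edges $(i,i+1)$ only. Set $C_N=\{1,\ldots,N,*_N\}$; then $\partial_\textup{iv}^{G}C_N=\{1\}$, so $\mathcal{C}=\{C_N,\overline{C_N}:N\geq 2\}$ has uniformly bounded boundaries. But with $\omega_N=\{0,\ldots,N\}$ one gets $C_N\cap\omega_N=\{1,\ldots,N\}\in\operatorname{Cut}(G)$ and $\partial_\textup{iv}^{G}(C_N\cap\omega_N)=\{1,N\}$, of $G$-diameter $N-1$. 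The vertex $N$ is precisely your residual Case~(B) vertex, sitting at $G$-distance $N-1$ from $\partial_\textup{iv}^{G}C_N$; neither the per-cut finiteness of $\partial_\textup{oe}^{G}(C_N\cap\omega_N)$ nor the cardinality estimate of Lemma~\ref{lot4} yields any uniform control here.
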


\begin{proof}
(i) Take $l\geq 1$ so that $(\lambda\times\lambda)(G')\subset G^l$. For a cut $C$ of $G$, let $(y_1,y_2)\in \partial_\textup{oe}^{G'}\lambda^{-1}(C)$. Then we have $d_G(\lambda(y_1),\lambda(y_2))\leq l$ and thus $\lambda(y_1)\in B_G\mleft(l;\partial_\textup{iv}^G{C}\mright)$. This implies that $\partial_\textup{iv}^{G'}\lambda^{-1}(C)\subset \lambda^{-1}\mleft(B_G\mleft(l;\partial_\textup{iv}^G{C}\mright)\mright)$. Hence ${\lambda^{-1}(\mathcal{C})}^*$ is a cutset of $G'$ with uniformly bounded boundaries. Also, the map
\begin{equation*}
    \{C\in\mathcal{C}\mid \lambda^{-1}(C)\in\textup{Cut}(G')\}\to \textup{Cut}(G'),\ C\mapsto \lambda^{-1}(C)
\end{equation*}
is Borel and finite-to-one. Then the image is ${\lambda^{-1}(\mathcal{C})}^*$, which is a Borel subset of $\textup{Cut}(G')$.

(ii) It is clear that $\mathcal{C}{|_H}^*$ is a cutset of $G$ with uniformly bounded boundaries. Also, the map
\begin{equation*}
    \{(C,x)\in\mathcal{C}\times X\mid C\cap [x]_H\in \textup{Cut}(H)\}\mapsto C\cap [x]_H
\end{equation*}
is Borel and countable-to-one. Then the image is $\mathcal{C}{|_H}^*$, which is a Borel subset of $\textup{Cut}(H)$.
\end{proof}

\subsection{Structure trees for Borel graphs}\label{slot3}

Let $(X,G)$ be a Borel graph with uniformly bounded degrees and $\mathcal{C}$ a Borel treeset of $G$. Then for every $\omega\in X/E_G$, we have the treeset $\mathcal{C}_\omega$ on $\omega$, the tree $(V_{\mathcal{C}_\omega},T_{\mathcal{C}_\omega})$ and the map $\rho_\omega:\omega\to V_{\mathcal{C}_\omega}$ by Propositions \ref{tre3} and \ref{tre9}. We set
\begin{equation*}
    (V_\mathcal{C},T_\mathcal{C})=\bigsqcup_{\omega\in X/E_G}(V_{\mathcal{C}_\omega},T_{\mathcal{C}_\omega})
    \textup{ and }\rho=\bigsqcup_{\omega\in X/E_G}\rho_\omega:X\to V_{\mathcal{C}}.
\end{equation*}
Then $(V_\mathcal{C},T_\mathcal{C})$ might not be a Borel graph, but $T_\mathcal{C}$ admits a standard Borel structure since it is identified with the Borel set $\mathcal{C}$ by Lemma \ref{tre8}.

\begin{lem}
The function $(x,y)\in E_G\mapsto d_{T_\mathcal{C}}(\rho(x),\rho(y))$ is Borel. In particular, $(\rho\times\rho)^{-1}(\Delta_{V_{\mathcal{C}}})$ is a Borel equivalence relation on $X$.
\end{lem}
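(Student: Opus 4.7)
The plan is to reduce the statement to a cardinality-counting argument via Proposition \ref{tre9}(ii). For $(x,y) \in E_G$, writing $\omega = [x]_G = [y]_G$, that proposition gives
\begin{equation*}
d_{T_\mathcal{C}}(\rho(x),\rho(y)) = \left|\left\{C \in \mathcal{C}_\omega \mid x \in C,\ y \notin C\right\}\right|,
\end{equation*}
and the finitely separating condition (iii) of Definition \ref{lot10} ensures this cardinality is a non-negative integer. Hence it suffices to show that
\begin{equation*}
R = \left\{(C,x,y) \in \mathcal{C} \times E_G \mid x \in C,\ y \notin C\right\}
\end{equation*}
is Borel, after which a standard consequence of the Lusin--Novikov uniformization theorem (the counting function of a Borel set with countable sections is Borel) delivers Borelness of $(x,y) \mapsto d_{T_\mathcal{C}}(\rho(x),\rho(y))$.

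The main step is to verify that the incidence relation
\begin{equation*}
M = \left\{(C,x) \in \mathcal{C} \times X \mid x \in C\right\}
\end{equation*}
is Borel. Recall that $\mathcal{C}$ is regarded as a Borel subset of the space of finite subsets of $G$ via the identification $C \leftrightarrow \partial_\textup{oe}^G C$, so vertex membership must be reconstructed from edge data. Given $\partial_\textup{oe}^G C$, one reads off $\partial_\textup{ie}^G C$ (by flipping coordinates), the combined crossing-edge set $F_C = \partial_\textup{oe}^G C \cup \partial_\textup{ie}^G C$, and the inner vertex boundary $\partial_\textup{iv}^G C$ (the set of first coordinates of $\partial_\textup{oe}^G C$), all in a Borel manner. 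The key observation is that since $[x]_G$ is a single $E_G$-class, every $G$-connected component of $C$ must meet $\partial_\textup{iv}^G C$; otherwise such a component would be $G$-isolated from the rest of its $E_G$-class. Consequently, $x \in C$ if and only if there exist $n \geq 0$ and a sequence $x_0 = x, x_1, \dots, x_n$ in $X$ with $x_n \in \partial_\textup{iv}^G C$ and $(x_i, x_{i+1}) \in G \setminus F_C$ for all $i$, which is a countable union over $n$ of Borel conditions. Thus $M$, and therefore $R$, is Borel.

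For the ``in particular'' claim, note that $\rho(x) = \rho(y)$ forces $[x]_G = [y]_G$, so
\begin{equation*}
(\rho \times \rho)^{-1}(\Delta_{V_\mathcal{C}}) = \left\{(x,y) \in E_G \mid d_{T_\mathcal{C}}(\rho(x), \rho(y)) = 0\right\},
\end{equation*}
which is Borel by the first part and is an equivalence relation as the pullback of a diagonal. The main obstacle I anticipate is the edge-to-vertex bookkeeping needed to show $M$ is Borel; once that membership description is set up, the remainder is routine application of standard descriptive set theory.
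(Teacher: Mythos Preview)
Your proof is correct and follows the same core idea as the paper: reduce to the counting formula $d_{T_\mathcal{C}}(\rho(x),\rho(y)) = |\{C\in\mathcal{C}\mid x\in C,\ y\notin C\}|$ from Proposition~\ref{tre9}(ii). The paper's proof is a single line invoking this equation and leaving the Borelness of the resulting counting function implicit; you have simply unpacked that implicit step by verifying that the incidence relation $\{(C,x)\in\mathcal{C}\times X\mid x\in C\}$ is Borel (via the path-to-boundary description) and then applying Lusin--Novikov counting. This extra detail is sound and is exactly what one would write if asked to justify the paper's one-line proof.
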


\begin{proof}
This follows from the equation
\begin{equation*}
    d_{T_\mathcal{C}}(\rho(x),\rho(y))=|\{C\in\mathcal{C}\mid x\in C,\ y\notin C\}|
\end{equation*}
for every $(x,y)\in E_G$ given by Proposition \ref{tre9} (ii).
\end{proof}

\begin{lem}\label{tre1}
Suppose that for every $(x,y)\in G$, we have
\begin{equation*}
    |\{C\in\mathcal{C}\mid x\in C,\ y\notin C\}|\leq 1.
\end{equation*}
Then $\rho:(X,G)\rightarrow (V_\mathcal{C},T_\mathcal{C})$ is a $1$-surjective simplicial map, i.e.,
\begin{equation*}
    T_\mathcal{C}\subset (\rho\times\rho)(G)\subset \Delta_{V_\mathcal{C}}\cup T_\mathcal{C}.
\end{equation*}
\end{lem}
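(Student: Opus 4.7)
The plan is to verify the two inclusions $T_{\mathcal{C}}\subset (\rho\times\rho)(G)\subset \Delta_{V_\mathcal{C}}\cup T_\mathcal{C}$ separately, with both statements reducing to the computation of $d_{T_\mathcal{C}}(\rho(x),\rho(y))$ supplied by Proposition \ref{tre9}(ii) together with the uniqueness in Lemma \ref{tre8}.

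For the inclusion $(\rho\times\rho)(G)\subset \Delta_{V_\mathcal{C}}\cup T_\mathcal{C}$, I would take an arbitrary edge $(x,y)\in G$ and apply Proposition \ref{tre9}(ii) to obtain
\begin{equation*}
    d_{T_\mathcal{C}}(\rho(x),\rho(y))=|\{C\in\mathcal{C}\mid x\in C,\ y\notin C\}|\leq 1
\end{equation*}
by the standing hypothesis. Distance at most $1$ means either $\rho(x)=\rho(y)$ or $(\rho(x),\rho(y))\in T_\mathcal{C}$, which is exactly the desired inclusion.

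For the inclusion $T_\mathcal{C}\subset (\rho\times\rho)(G)$, I would start from an arbitrary edge $(u,v)\in T_\mathcal{C}$ and write $u\setminus v=\{C\}$, so that $u$ and $v$ lie in $V_{\mathcal{C}_\omega}$ for some $E_G$-class $\omega$ and $C\in\mathcal{C}_\omega$. Since $C$ is a cut of $G$, the edge boundary $\partial_\textup{oe}^GC=(C\times\overline{C})\cap G$ is non-empty, so I can pick $(x,y)\in G$ with $x\in C$ and $y\in\overline{C}$. Then $C\in\rho(x)\setminus\rho(y)$, and the hypothesis of the lemma forces $\rho(x)\setminus\rho(y)=\{C\}$, so $(\rho(x),\rho(y))\in T_\mathcal{C}$ with $\rho(x)\setminus\rho(y)=u\setminus v=\{C\}$. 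Finally I invoke the uniqueness clause of Lemma \ref{tre8} to conclude $(\rho(x),\rho(y))=(u,v)$, giving the desired lift.

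There is no real obstacle here: every ingredient has already been prepared, and the argument is essentially a bookkeeping step that matches the combinatorial definition of $T_\mathcal{C}$ to the vertex-to-cut translation given by $\rho$. The only point to be careful about is that $\partial_\textup{oe}^GC\neq\varnothing$, which is part of the definition of a cut, and that the hypothesis is used in both directions — to bound the distance from above in the simplicial part, and to guarantee that the chosen edge separates exactly the cut $C$ in the surjectivity part.
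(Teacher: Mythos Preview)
Your proposal is correct and follows essentially the same approach as the paper: both inclusions are verified just as you describe, with the simplicial inclusion coming from $|\rho(x)\setminus\rho(y)|\leq 1$ and the surjectivity coming from picking an edge in $\partial_\textup{oe}^GC$ and invoking the uniqueness in Lemma~\ref{tre8}. One small remark: the non-emptiness of $\partial_\textup{oe}^GC$ is not literally part of the definition of a cut but rather follows from it, since $C$ is a non-empty proper subset of a $G$-connected $E_G$-class.
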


\begin{proof}
For every $(x,y)\in G$, we have $|\rho(x)\setminus\rho(y)|\leq 1$ by assumption, which implies that $\rho(x)=\rho(y)$ or $(\rho(x),\rho(y))\in T_\mathcal{C}$. Hence $(\rho\times\rho)(G)\subset \Delta_{V_\mathcal{C}}\cup T_\mathcal{C}$ holds. 

For $(u,v)\in T_\mathcal{C}$, take $C\in\mathcal{C}$ so that $u\setminus v=\{C\}$. Then any $(x,y)\in \partial_\textup{oe}^GC$ satisfies $\rho(x)\setminus\rho(y)=\{C\}$ and thus $(\rho(x),\rho(y))=(u,v)$. Hence $T_\mathcal{C}\subset (\rho\times\rho)(G)$ holds.
\end{proof}

The following is very inspired by the proof of \cite[Theorem 4.1]{Tse}. 

\begin{prop}\label{lot8}
Let $\mathcal{C}$ be a Borel treeset of $G$ with uniformly bounded boundaries such that for every $(x,y)\in G$,
\begin{equation*}
    |\{C\in\mathcal{C}\mid x\in C,\ y\notin C\}|\leq 1.
\end{equation*}
Then there exists a Borel graph $G'$ on $X$ Lipschitz equivalent to $G$ such that:
\begin{enumerate}
    \item $G'=T\ast H$ with $T$ and $H$ Borel subgraphs of $G'$. 
    \item $T$ is acyclic.
    \item $\mathcal{C}{|_H}^*=\varnothing$.
\end{enumerate}
\end{prop}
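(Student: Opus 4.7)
The first step is to apply the structure tree machinery: by Proposition~\ref{tre3} and Lemma~\ref{tre1}, we have a tree $(V_\mathcal{C},T_\mathcal{C})$ and a simplicial map $\rho:(X,G)\to(V_\mathcal{C},T_\mathcal{C})$ sending each $G$-edge either to a vertex or to a $T_\mathcal{C}$-edge. The crucial geometric step I would prove first is that for every $C\in\mathcal{C}$ the set $\partial_\textup{iv}^G C$ is contained in a single $\rho$-fiber: if $x,x'\in\partial_\textup{iv}^G C$ were separated by some $D\in\mathcal{C}\setminus\{C,\overline{C}\}$, then nestedness combined with $x,x'\in C$ would force either $D\subseteq C$ or $\overline{C}\subseteq D$, and in either case one of the $G$-edges witnessing $x\in\partial_\textup{iv}^G C$ or $x'\in\partial_\textup{iv}^G C$ would be separated by both $C$ and $D$, contradicting the hypothesis. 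Let $r=\sup_{C\in\mathcal{C}}\textup{diam}_G(\partial_\textup{iv}^G C)<\infty$, and use the Lusin-Novikov uniformization to pick a Borel selection $\sigma:\mathcal{C}\to G$ with $\sigma(C)\in\partial_\textup{oe}^G C$ and $\sigma(\overline{C})=\sigma(C)^{-1}$; set $T=\sigma(\mathcal{C})$, a symmetric Borel subgraph of $G$ whose unordered edges are in bijection with those of $T_\mathcal{C}$.

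Next I would define
\[ H=\{(x,y)\in X\times X : \rho(x)=\rho(y),\ 1\leq d_G(x,y)\leq r\}, \]
a Borel graph with uniformly bounded degrees. Since every $H$-edge preserves $\rho$, each $E_H$-class is contained in a $\rho$-fiber, so $C\cap\omega\in\{\omega,\varnothing\}$ for every $C\in\mathcal{C}$ and every $E_H$-class $\omega$, whence $\mathcal{C}{|_H}^*=\varnothing$. Put $G'=T\cup H$ (with $T\cap H=\varnothing$ since $T$-edges cross fibers). Clearly $G'\subseteq G^r$; conversely, every within-fiber $G$-edge already lies in $H$, and a cross-fiber $G$-edge $(x,y)\in\partial_\textup{oe}^G C$ can be replaced by the $G'$-path of length three $x\to x'\to y'\to y$ with $(x',y')=\sigma(C)$, since the key lemma places $x,x'$ and $y,y'$ in common fibers at $G$-distance at most $r$. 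Hence $G\subseteq(G')^3$, so $G'$ is Lipschitz equivalent to $G$. Acyclicity of $T$ follows from the same rigidity: a simple cycle of length $\geq 3$ in $T$ would project to a closed walk in the tree $T_\mathcal{C}$ with consecutive distinct vertices, which must backtrack; uniqueness of the $T$-preimage of the backtracked $T_\mathcal{C}$-edge would then force vertex repetition in the cycle.

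It remains to verify free intersection. Assume $\{x_i\}_{i=0}^{2n}$ with $n\geq 1$ satisfies $(x_{2i},x_{2i+1})\in E_T\setminus\Delta_X$ and $(x_{2i+1},x_{2i+2})\in E_H\setminus\Delta_X$, and, toward contradiction, $x_0=x_{2n}$. The acyclicity argument above also implies that each $\rho$-fiber meets each $T$-component in at most one point, so with $u_i=\rho(x_{2i})$ we have $u_i\neq u_{i+1}=\rho(x_{2i+1})$ and the unique simple $T$-path from $x_{2i}$ to $x_{2i+1}$ projects to the $T_\mathcal{C}$-geodesic from $u_i$ to $u_{i+1}$. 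Concatenating with the $\rho$-stationary $E_H$-steps yields a closed walk in $T_\mathcal{C}$ built from non-trivial geodesic segments joined at the pivots $u_1,\dots,u_{n-1}$; in a tree, such a walk must backtrack at some pivot $u_j$, meaning the final edge of the incoming geodesic and the initial edge of the outgoing geodesic at $u_j$ coincide as a single $T_\mathcal{C}$-edge, and uniqueness of the $T$-preimage then forces either $x_{2j-1}=x_{2j}$ (contradicting $E_H\setminus\Delta_X$) or $x_{2j}$ to lie in a fiber other than $u_j$ (impossible). The main obstacle of the proof is the geometric lemma of the first paragraph: it is the sole ingredient that links the single-cut-separation hypothesis to the fiberwise containment of boundaries, and without it neither the $H$-shortcut in the second paragraph nor the pivot argument here would be available.
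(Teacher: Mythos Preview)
Your argument follows the paper's strategy: build $\rho:(X,G)\to(V_\mathcal{C},T_\mathcal{C})$, choose a Borel section $T\subset G$ mapping bijectively onto $T_\mathcal{C}$, define $H$ inside the $\rho$-fibers, and verify free intersection by projecting $T$-paths to $T_\mathcal{C}$. The ``key geometric lemma'' you isolate is obtained in the paper without a nestedness case analysis: since every $(x,y)\in\partial_\textup{oe}^G C$ satisfies $\rho(x)\setminus\rho(y)=\{C\}$ by hypothesis, Lemma~\ref{tre8} sends all of $\partial_\textup{oe}^G C$ to the single $T_\mathcal{C}$-edge indexed by $C$, and hence $\rho(\partial_\textup{iv}^G C)$ is a single vertex. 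Your backtracking argument for free intersection is a legitimate variant of the paper's inductive distance identity $d_{T_\mathcal{C}}(\rho(x_0),\rho(x_{2n}))=\sum_i d_{T_\mathcal{C}}(\rho(x_{2i}),\rho(x_{2i+2}))$.

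There is one genuine slip. Your $H=\{(x,y):\rho(x)=\rho(y),\ 1\le d_G(x,y)\le r\}$ need not contain the within-fiber $G$-edges when $r=0$, which happens whenever every $\partial_\textup{iv}^G C$ is a singleton (e.g.\ $X=\mathbb{Z}$ with the standard graph and $\mathcal{C}=\{(-\infty,0],[1,\infty)\}$). In that situation ``every within-fiber $G$-edge already lies in $H$'' fails, $G'=T$ consists of a single edge, and $G\not\subset(G')^k$ for any $k$, so Lipschitz equivalence breaks. The paper avoids this by setting $G_1=G\cup\bigcup_{C\in\mathcal{C}}\bigl((\partial_\textup{iv}^G C\times\partial_\textup{iv}^G C)\setminus\Delta_X\bigr)$ and $H=(\rho\times\rho)^{-1}(\Delta_{V_\mathcal{C}})\cap G_1$, which always retains the within-fiber part of $G$; your definition is repaired simply by replacing $r$ with $\max(r,1)$.
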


\begin{proof}
By the previus lemma, $\rho:(X,G)\rightarrow (V_\mathcal{C},T_\mathcal{C})$ is a $1$-surjective simplicial map. Note that the map
\begin{equation*}
    \rho\times\rho:(\rho\times\rho)^{-1}(T_\mathcal{C})\cap G\to T_\mathcal{C}
\end{equation*}
is a finite-to-one surjective Borel map. Indeed, for $C\in\mathcal{C}$, this map sends the elements of $\partial_\textup{oe}^GC$ to the unique edge of $T_\mathcal{C}$ identfied with $C$ by Lemma \ref{tre8}. Hence we can take a Borel subgraph $T\subset G$ so that $(\rho\times\rho)|_T:T\to T_\mathcal{C}$ is bijective. Then $T$ is clearly acyclic.

Set
\begin{equation*}
    G_1=G\cup\mleft(\bigcup_{C\in\mathcal{C}}\mleft(\partial_\textup{iv}^GC\times \partial_\textup{iv}^GC\mright)\setminus\Delta_X\mright)
\end{equation*}
Note that for $C\in\mathcal{C}$, the map $\rho\times\rho$ sends all elements of $\partial_\textup{oe}^GC$ to a single edge of $T_\mathcal{C}$, and thus $\rho$ sends all elements of $\partial_\textup{iv}^GC$ to a single vertex. Hence $\rho:(X,G_1)\rightarrow (V_\mathcal{C},T_\mathcal{C})$ is also a $1$-surjective simplicial map. Now set
\begin{equation*}
    H=(\rho\times\rho)^{-1}(\Delta_{V_\mathcal{C}})\cap G_1.
\end{equation*}
Then it is clear that $\mathcal{C}{|_H}^*=\varnothing$ since the endpoints of every edge of $H$ is not separated by any cut in $\mathcal{C}$.

To show that $E_T$ and $E_H$ are freely intersecting, let $n\geq 1$ and $\{x_i\}_{i=0}^{2n}$ be a sequence of $X$ such that
\begin{equation*}
    (x_{2i},x_{2i+1})\in E_T\setminus\Delta_X\textup{ and } (x_{2i+1},x_{2i+2})\in E_H\setminus\Delta_X
\end{equation*}
for every $0\leq i\leq n-1$. It suffices to verify that $x_0\neq x_{2n}$. Note that $\rho(x_{2i})\neq \rho(x_{2i+1})=\rho(x_{2i+2})$ for every $0\leq i\leq n-1$. We prove that
\begin{equation}
    d_{T_\mathcal{C}}(\rho(x_0),\rho(x_{2n}))=\sum_{i=0}^{n-1}d_{T_\mathcal{C}}(\rho(x_{2i}),\rho(x_{2i+2})) \label{eqlot2}
\end{equation}
by the induction on $n$. We may assume $n\geq 2$. Let $\{y_j\}_{j=0}^k$ be the simple $T$-path from $x_0$ to $x_1$, and let $\{z_j\}_{j=0}^l$ be the simple $T$-path from $x_2$ to $x_3$. Then $\{\rho(y_j)\}_{j=0}^k$ and $\{\rho(z_j)\}_{j=0}^l$ are simple $T_\mathcal{C}$-paths since $(\rho\times\rho)|_{T}:T\rightarrow T_\mathcal{C}$ is bijective. Moreover, we have $(y_k,y_{k-1})\neq (z_0,z_1)$ since $y_k=x_1\neq x_2=z_0$. This implies that $(\rho(y_k),\rho(y_{k-1}))\neq (\rho(z_0),\rho(z_1))$, and thus 
\begin{equation*} 
    \rho(y_0),\rho(y_1)...,\rho(y_k)=\rho(z_0),\rho(z_1),...,\rho(z_l)
\end{equation*}
is a simple $T_\mathcal{C}$-path from $\rho(x_0)$ to $\rho(x_3)=\rho(x_4)$ passing through $\rho(x_1)=\rho(x_2)$. Then we have
\begin{equation*}
    d_{T_\mathcal{C}}(\rho(x_0),\rho(x_{4}))=d_{T_\mathcal{C}}(\rho(x_0),\rho(x_{2}))+d_{T_\mathcal{C}}(\rho(x_2),\rho(x_{4})),
\end{equation*}
and equation \eqref{eqlot2} holds by the induction hypothesis. In particular, we have $d_{T_\mathcal{C}}(\rho(x_0),\rho(x_{2n}))>0$ and thus $x_0\neq x_{2n}$.

Finally we show $G'=T\ast H$ is Lipschitz equivalent to $G$. Set 
\begin{equation*}
    r=\sup_{C\in\mathcal{C}}\textup{diam}_G\mleft(\partial_\textup{iv}^G C\mright).
\end{equation*}
Then for every $(x,y)\in G_1\setminus G$, we have $d_G(x,y)\leq r$ since $x,y\in \partial_\textup{iv}^G C$ for some $C\in\mathcal{C}$. Hence $G$ and $G_1$ are $r$-Lipschitz equivalent. For every $(x,y)\in G_1\setminus H$, we have $(\rho(x),\rho(y))\in T_\mathcal{C}$. Then there exists a unique $(x',y')\in T$ such that $(\rho(x'),\rho(y'))=(\rho(x),\rho(y))$. We can take $C\in\mathcal{C}$ so that $(x,y),(x',y')\in\partial_\textup{oe}^GC$, which implies that
\begin{align*}
    &(x,x')\in\partial_\textup{iv}^GC\times \partial_\textup{iv}^GC\subset H\cup\Delta_X\textup{ and }\\&(y,y')\in\partial_\textup{iv}^G\overline{C}\times \partial_\textup{iv}^G\overline{C}\subset H\cup\Delta_X.
\end{align*}
Hence we have $d_{G'}(x,y)\leq 3$, and $G_1$ and $G'$ are $3$-Lipschitz equivalent. Eventually $G$ and $G'$ are $3r$-Lipschitz equivalent.
\end{proof}

\section{Cohomology of graphs}\label{sbk}

Througout this section, let $R$ be a non-zero commutative ring and $(X,G)$ an abstract graph with uniformly bounded degrees. Many discussions here are analogous to the usual group cohomology (see e.g., \cite{Co}).

\subsection{Computaion of cohomology}\label{sbk1}

Recall that $R_G$ is an $R$-algebra associated to $G$, and $l^\infty_R(X)$ is identified with the subalgebra $l^\infty_R(\Delta_X)\subset R_G$ (Definition \ref{intr7}).

\begin{nta}
For $\varphi\in\llangle G\rrangle$, set $\alpha_\varphi=1_{\textup{graph}\varphi}\in R_G$.
Note that
\begin{align*}
    \alpha_\varphi\alpha_\psi=\alpha_{\varphi\psi}\textup{ and }\alpha_\varphi f\alpha_{\varphi^{-1}}=\alpha_{\varphi*}f
\end{align*}
hold for $\varphi,\psi\in\llangle G\rrangle$ and $f\in l^\infty_R(X)$, where $\alpha_{\varphi*}f$ is the multiplication in the left $R_G$-module $l^\infty_R(X)$ as defined in Definition \ref{intr6}.
\end{nta}

\begin{rmk}\label{bk16}
If $G=\bigcup_{i=1}^n\textup{graph}\varphi_i$ with $\{\varphi_i\}_{i=1}^n\subset\llangle G\rrangle$, then $R_G$ is generated by the set $\{\alpha_{\varphi_i}\}_{i=1}^n\cup l^\infty_R(X)$ as a ring. Indeed, for every $k\in\mathbb{Z}_{\geq 0}$, there exists $\{\psi_j\}_{j=1}^m\subset\llangle G\rrangle$ such that $G^k=\bigcup_{j=1}^m\textup{graph}\psi_j$ and each $\psi_j$ is obtained by composing elements of $\{\varphi_i\}_{i=1}^n$ at most $k$ times. Then for every $a\in R_G^k$, there exists $\{f_j\}_{j=1}^m\subset l^\infty_R(X)$ such that
\begin{equation*}
    a=\sum_{j=1}^m \alpha_{\psi_j}f_j=\sum_{j=1}^m \mleft(\alpha_{\psi_j*}f_j\mright)\alpha_{\psi_j}.
\end{equation*}
\end{rmk}

\begin{rmk}\label{bk17}
For a subset $A\subset X$, the left $R_G$-module $R_G\cdot 1_A$ is projective. Indeed, this is the set of functions in $R_G$ supported on $X\times A$, and thus $R_G=(R_G\cdot1_A)\oplus (R_G\cdot1_{X\setminus A})$. Hence $R_G\cdot1_A$ is a direct summand of $R_G$.

In the same way, the left $l^\infty_R(X)$-module $l^\infty_R(X)\cdot 1_A$ is projective.
\end{rmk}

\begin{rmk}\label{bk12}
The algebra $R_G$ is projective as a left $l^\infty_R(X)$-module. Indeed, by Remark \ref{bk16}, we can take $\{\varphi_i\}_{i=1}^\infty\subset \llangle G\rrangle$ so that for every $k\in\mathbb{Z}_{\geq 0}$, there exists $n\in\mathbb{Z}_{\geq 1}$ such that $G^k=\bigsqcup_{i=1}^n\textup{graph}\varphi_i$. Then we have $R_G=\bigoplus_{i=1}^\infty l^\infty_R(X)\cdot \alpha_{\varphi_i}$, where each $l^\infty_R(X)\cdot \alpha_{\varphi_i}$ is projective since we have 
\begin{equation*}
    l^\infty_R(X)\cdot \alpha_{\varphi_i}\simeq l^\infty_R(X)\cdot 1_{\textup{im}\varphi_i} ,\ f\alpha_{\varphi_i}\mapsto f1_{\textup{im}\varphi_i}.
\end{equation*}
\end{rmk}

For a left $R_G$-module $M$, the cohomology groups $\textup{H}^n(G,M)$ are defined in Definition \ref{intr6}. Note that a left $R_G$-module is also a left $\mathbb{Z}_G$-module since we have natural isomorphism $R_G\simeq \mathbb{Z}_G\otimes_\mathbb{Z}R$ of rings. The following lemma states that $\textup{H}^n(G,M)$ can be computed by regarding $M$ as a left $\mathbb{Z}_G$-module. We record this as a basic property, although it is not directly related to our goal.

\begin{lem}\label{bk11}
For all left $R_G$-module $M$ and integer $n\geq 0$, we have
\begin{equation*}
    \textup{Ext}_{\mathbb{Z}_G}^n(l^\infty_\mathbb{Z}(X),M)\simeq \textup{Ext}_{R_G}^n(l^\infty_R(X),M).
\end{equation*}
\end{lem}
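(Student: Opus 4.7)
The strategy is to build compatible bar-type projective resolutions on both sides of the comparison that are related by scalar extension along $\mathbb{Z}_G\hookrightarrow R_G$, and then invoke the induction-restriction adjunction.

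First I would construct, for each commutative ring $A\in\{\mathbb{Z},R\}$ simultaneously, the bar resolution of $l^\infty_A(X)$ relative to the subalgebra $l^\infty_A(X)\subset A_G$:
\begin{equation*}
    \cdots\to B_2^A\to B_1^A\to B_0^A\xrightarrow{\epsilon}l^\infty_A(X)\to 0, \qquad B_n^A:=A_G^{\otimes_{l^\infty_A(X)}(n+1)},
\end{equation*}
where $\epsilon(a)=a_*1_X$ and the differential is the usual alternating sum of partial multiplications with $\epsilon$ applied at the rightmost end. The complex admits the left $l^\infty_A(X)$-linear contracting homotopy $a_0\otimes\cdots\otimes a_n\mapsto 1_X\otimes a_0\otimes\cdots\otimes a_n$, hence is exact. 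Each $B_n^A$ is projective as a left $A_G$-module: by Remark \ref{bk12}, $A_G$ is projective over $l^\infty_A(X)$, so inductively $A_G^{\otimes_{l^\infty_A(X)}n}$ is projective as a left $l^\infty_A(X)$-module (being a direct summand of a direct sum of copies of $A_G$), whence $B_n^A=A_G\otimes_{l^\infty_A(X)}A_G^{\otimes n}$ is $A_G$-projective by extension of scalars. Thus $B_\bullet^A$ is an $A_G$-projective resolution of $l^\infty_A(X)$.

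Next, observe $l^\infty_R(X)\simeq l^\infty_\mathbb{Z}(X)\otimes_\mathbb{Z} R$ and $R_G\simeq \mathbb{Z}_G\otimes_\mathbb{Z} R$ (every element has finite image, hence is a finite $R$-linear combination of $\mathbb{Z}$-valued indicator functions). Applying the standard base-change identity
\begin{equation*}
    (M\otimes_\mathbb{Z} R)\otimes_{l^\infty_R(X)}(N\otimes_\mathbb{Z} R)\;\simeq\;(M\otimes_{l^\infty_\mathbb{Z}(X)}N)\otimes_\mathbb{Z} R
\end{equation*}
iteratively to the $(n+1)$-fold tensor power, I obtain a natural isomorphism
\begin{equation*}
    B_n^R\;\simeq\;B_n^\mathbb{Z}\otimes_\mathbb{Z} R\;\simeq\;R_G\otimes_{\mathbb{Z}_G}B_n^\mathbb{Z}
\end{equation*}
of left $R_G$-modules, compatible with the bar differentials and augmentations.

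Finally, the induction-restriction adjunction along $\mathbb{Z}_G\hookrightarrow R_G$ produces a natural isomorphism of cochain complexes
\begin{equation*}
    \textup{Hom}_{R_G}(B_\bullet^R,M)\;=\;\textup{Hom}_{R_G}(R_G\otimes_{\mathbb{Z}_G}B_\bullet^\mathbb{Z},M)\;\simeq\;\textup{Hom}_{\mathbb{Z}_G}(B_\bullet^\mathbb{Z},M)
\end{equation*}
for any left $R_G$-module $M$; taking cohomology in degree $n$ yields the claim. The main obstacle is the bookkeeping in the base-change step: one must verify that the natural isomorphism $B_n^R\simeq R_G\otimes_{\mathbb{Z}_G}B_n^\mathbb{Z}$ intertwines the full simplicial structure (differentials, augmentations, and the contracting homotopies used to establish exactness), but this reduces to the routine functoriality of the base-change formula for tensor products.
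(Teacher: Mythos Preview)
Your argument is correct and shares its skeleton with the paper's proof: both pass from a $\mathbb{Z}_G$-projective resolution to an $R_G$-projective resolution via extension of scalars along $\mathbb{Z}_G\hookrightarrow R_G$, and both finish with the induction--restriction adjunction $\textup{Hom}_{R_G}(R_G\otimes_{\mathbb{Z}_G}P_\bullet,M)\simeq\textup{Hom}_{\mathbb{Z}_G}(P_\bullet,M)$. The difference lies in how the exactness of the induced resolution is obtained. You build the explicit relative bar resolutions $B_\bullet^{\mathbb{Z}}$ and $B_\bullet^{R}$, verify exactness by a contracting homotopy, and then check that $B_\bullet^{R}\simeq R_G\otimes_{\mathbb{Z}_G}B_\bullet^{\mathbb{Z}}$ via an iterated base-change identity. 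The paper instead starts from an \emph{arbitrary} $\mathbb{Z}_G$-projective resolution $P_\bullet$, observes (using Remark~\ref{bk12}) that it is automatically an $l^\infty_\mathbb{Z}(X)$-projective resolution of the projective $l^\infty_\mathbb{Z}(X)$-module $l^\infty_\mathbb{Z}(X)$, and invokes Lemma~\ref{pre3} together with $R_G\simeq l^\infty_R(X)\otimes_{l^\infty_\mathbb{Z}(X)}\mathbb{Z}_G$ to see that $R_G\otimes_{\mathbb{Z}_G}P_\bullet$ is exact. The paper's route is shorter and avoids the simplicial bookkeeping you flag at the end; your route has the virtue of being self-contained and not relying on Lemma~\ref{pre3}, at the cost of that bookkeeping.
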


\begin{proof}
Take a $\mathbb{Z}_G$-projective resolution
\begin{equation*}
    \cdots\to P_{n}\to \cdots \to P_0\to l^\infty_\mathbb{Z}(X)\to 0.
\end{equation*}
By Remark \ref{bk12}, this is also an $l^\infty_\mathbb{Z}(X)$-projective resolution. Then by Lemma \ref{pre3}, the sequence
\begin{equation}
    \cdots\to l^\infty_R(X)\otimes_{l^\infty_\mathbb{Z}(X)} P_{n}\to \cdots \to l^\infty_R(X)\otimes_{l^\infty_\mathbb{Z}(X)}P_0\to l^\infty_R(X)\to 0 \label{eqbk4}
\end{equation}
is exact. Note that we have $R_G\simeq l^\infty_R(X)\otimes_{l^\infty_\mathbb{Z}(X)}\mathbb{Z}_G$ as right $\mathbb{Z}_G$-modules. Indeed, the isomorphism is given by the map 
\begin{equation*}
    f\alpha_\varphi\in R_G\mapsto f\otimes \alpha_\varphi\in l^\infty_R(X)\otimes_{l^\infty_\mathbb{Z}(X)}\mathbb{Z}_G
\end{equation*}
for $\varphi\in\llangle G\rrangle$ and $f\in l^\infty_R(X)$. Then the sequence
\begin{equation*}
    \cdots\to R_G\otimes_{\mathbb{Z}_G} P_{n}\to \cdots \to R_G\otimes_{\mathbb{Z}_G} P_0\to l^\infty_R(X)\to 0
\end{equation*}
is an $R_G$-projective resolution since it is identified with the exact sequence \eqref{eqbk4}. Now for any left $R_G$-module $M$, we have an isomorphism of cochain complexes
\begin{equation*}
    \textup{Hom}_{\mathbb{Z}_G}(P_*,M)\simeq \textup{Hom}_{{R}_G}(R_G\otimes_{\mathbb{Z}_G}P_*,M),
\end{equation*}
given by
\begin{equation*}
    \textup{Hom}_{\mathbb{Z}_G}(P_n,M)\to \textup{Hom}_{{R}_G}(R_G\otimes_{\mathbb{Z}_G}P_n,M),\ \sigma\mapsto(1_X\otimes p\mapsto \sigma(p)).
\end{equation*}
Hence we have $\textup{Ext}_{\mathbb{Z}_G}^n(l^\infty_\mathbb{Z}(X),M)\simeq \textup{Ext}_{R_G}^n(l^\infty_R(X),M)$ for every $n\geq 0$.
\end{proof}

\begin{lem}\label{lot3}
Take $\{\varphi_i\}_{i=1}^n\subset\llangle G\rrangle$ so that $G=\bigsqcup_{i=1}^n\mleft(\textup{graph}\varphi_i\sqcup\textup{graph}\varphi_i^{-1}\mright)$. Then there exists an $R_G$-projective resolution
\begin{equation*}
    \cdots\to \bigoplus_{i=1}^n R_G \xrightarrow{\partial_1} R_G\xrightarrow{\varepsilon} l^\infty_R(X)\to 0
\end{equation*}
with $\varepsilon(1_X)=1_X$ and $\partial_1(1_X[i])=\alpha_{\varphi_i}-1_{\textup{im}\varphi_i}$. Here $1_X[i]\in \bigoplus_{i=1}^nR_G$ is the element such that the $i$-th coordinate is $1_X$ and the other coordinates are $0$.
\end{lem}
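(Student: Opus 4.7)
The plan is to define $\varepsilon$ and $\partial_1$ as the indicated left $R_G$-module homomorphisms, verify exactness at $R_G$ (the only nontrivial point), and then extend to a full projective resolution by a standard procedure.

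First I would set $\varepsilon:R_G\to l^\infty_R(X)$ to be the left $R_G$-module map $a\mapsto a_{*}1_X$; since $\varepsilon(1_X)=1_X$, it is surjective. Because $\bigoplus_{i=1}^n R_G$ is free, the assignment $1_X[i]\mapsto \alpha_{\varphi_i}-1_{\textup{im}\,\varphi_i}$ uniquely determines a left $R_G$-module homomorphism $\partial_1$. A direct calculation gives $\varepsilon(\alpha_{\varphi_i})=1_{\textup{im}\,\varphi_i}$, so $\varepsilon\circ\partial_1=0$ and hence $\textup{im}\,\partial_1\subseteq\ker\varepsilon$.

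The core step is the reverse inclusion. Let $M=\textup{im}\,\partial_1$. By definition, $\alpha_{\varphi_i}\equiv 1_{\textup{im}\,\varphi_i}\pmod{M}$; left-multiplying the relation $\partial_1(1_X[i])=\alpha_{\varphi_i}-1_{\textup{im}\,\varphi_i}$ by $-\alpha_{\varphi_i^{-1}}$ and using $\alpha_{\varphi_i^{-1}}\alpha_{\varphi_i}=1_{\textup{dom}\,\varphi_i}$ together with $\alpha_{\varphi_i^{-1}}1_{\textup{im}\,\varphi_i}=\alpha_{\varphi_i^{-1}}$ yields the companion relation $\alpha_{\varphi_i^{-1}}\equiv 1_{\textup{dom}\,\varphi_i}\pmod{M}$. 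I then claim, by induction on $k$, that any monomial $\alpha_{\eta_1}\cdots\alpha_{\eta_k}$ with each $\eta_j\in\{\varphi_i^{\pm 1}\}_{i=1}^n$ is congruent modulo $M$ to some element of $l^\infty_R(X)$. For the inductive step, if the tail $\alpha_{\eta_2}\cdots\alpha_{\eta_k}$ is equivalent to some $f\in l^\infty_R(X)$, one checks directly that $\alpha_{\eta_1}f=g\alpha_{\eta_1}$ for the function $g\in l^\infty_R(X)$ defined by $g(u)=f(\eta_1^{-1}(u))$ on $\textup{im}\,\eta_1$ and $g(u)=0$ elsewhere; then $g\alpha_{\eta_1}\equiv g\cdot 1_{\textup{im}\,\eta_1}=g\in l^\infty_R(X)$ since $M$ is a left $R_G$-submodule. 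By Remark \ref{bk16} applied to the family $\{\varphi_i^{\pm 1}\}$, any $a\in R_G$ is a finite sum $\sum_j f_j\alpha_{\psi_j}$ with each $\psi_j$ a monomial in $\varphi_i^{\pm 1}$ and each $f_j\in l^\infty_R(X)$; combining with the claim gives $a\equiv f\pmod{M}$ for some $f\in l^\infty_R(X)$. If additionally $\varepsilon(a)=0$, then $f=\varepsilon(f)=\varepsilon(a)=0$ (using $\varepsilon|_{l^\infty_R(X)}=\textup{id}$ and $\varepsilon|_M=0$), so $a\in M$.

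Once exactness at $R_G$ is established, the resolution is completed by choosing any $R_G$-free resolution of $\ker\partial_1$ and splicing it in, which exists for any module. The main obstacle is the monomial-reduction claim, which rests on the identity $\alpha_{\eta_1}f=g\alpha_{\eta_1}$ and on the fact that equivalence modulo $M$ passes across left multiplication; both reduce to direct computations from the definitions in Definition \ref{intr7}.
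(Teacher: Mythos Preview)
Your argument is correct and follows essentially the same route as the paper: both reduce $\ker\varepsilon\subset\textup{im}\,\partial_1$ to the claim that products $\alpha_{\eta_1}\cdots\alpha_{\eta_k}$ with $\eta_j\in\{\varphi_i^{\pm1}\}$ lie in $l^\infty_R(X)+\textup{im}\,\partial_1$, proved by induction on the length, and then conclude via the decomposition in Remark~\ref{bk16}. The paper phrases the induction as $\alpha_\psi-1_{\textup{im}\,\psi}\in\textup{im}\,\partial_1$ for compositions $\psi$, which amounts to the same computation as your commutation identity $\alpha_{\eta_1}f=g\alpha_{\eta_1}$.
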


\begin{proof}
Since $\varepsilon$ is clearly surjective, it suffices to show that $\textup{ker}\varepsilon=\textup{im}\partial_1$. Note that $\varepsilon\circ\partial_1=0$ holds.

We claim that if $\psi$ is a composition of elements of $\left\{\varphi_i^{\pm1}\right\}_{i=1}^n$, then we have $1_{\textup{im}\psi}-\alpha_{\psi}\in\textup{im}\partial_1$. Indeed, it is clear that 
\begin{equation*}
    \alpha_{\varphi_i}-1_{\textup{im}\varphi_i},\ \alpha_{\varphi_i^{-1}}-1_{\textup{im}\varphi_i^{-1}}=\alpha_{\varphi_i^{-1}}(1_{\textup{im}\varphi_i}-\alpha_{\varphi_i})\in \textup{im}\partial_1.
\end{equation*}
If $\psi=\psi_1\psi_2$ and $\alpha_{\psi_1}-1_{\textup{im}\psi_1},\ \alpha_{\psi_2}-1_{\textup{im}\psi_2}\in\textup{im}\partial_1$, then we have
\begin{align*}
    \alpha_{\psi}-\alpha_{\psi_1}1_{\textup{im}\psi_2}=\alpha_{\psi_1}(\alpha_{\psi_2}-1_{\textup{im}\psi_2})\in\textup{im}\partial_1\\
    \alpha_{\psi_1}1_{\textup{im}\psi_2}-1_{\textup{im}\psi}=1_{\textup{im}\psi}(\alpha_{\psi_1}-1_{\textup{im}\psi_1})\in\textup{im}\partial_1
\end{align*}
and thus 
\begin{equation*}
    \alpha_{\psi}-1_{\textup{im}\psi}=(\alpha_{\psi}-\alpha_{\psi_1}1_{\textup{im}\psi_2})+(\alpha_{\psi_1}1_{\textup{im}\psi_2}-1_{\textup{im}\psi})\in\textup{im}\partial_1.
\end{equation*}
Hence the claim is proved.

Now let $a=\sum_{j=1}^mf_j\alpha_{\psi_j}\in \textup{ker}\varepsilon$,
where $f_j\in l^\infty_R(X)$ and $\psi_j$ are compositions of elements of $\left\{\varphi_i^{\pm1}\right\}_{i=1}^n$. Then we have
\begin{equation*}
    a=a-\varepsilon(a)=\sum_{j=1}^mf_j(\alpha_{\psi_j}-1_{\textup{im}\psi_j})\in \textup{im}\partial_1.
\end{equation*}
Hence $\textup{ker}\varepsilon=\textup{im}\partial_1$ holds.
\end{proof}

\begin{cor}\label{bk15}
Take $\{\varphi_i\}_{i=1}^n\subset\llangle G\rrangle$ so that $G=\bigsqcup_{i=1}^n\mleft(\textup{graph}\varphi_i\sqcup\textup{graph}\varphi_i^{-1}\mright)$. Then for every left $R_G$-module $M$, the cohomology group $\textup{H}^0(G,M)$ is naturally identified with the set
\begin{equation}
    \{\xi\in M\mid (\alpha_{\varphi_i}-1_{\textup{im}\varphi_i})\xi=0\  \forall i=1,...,n\}. \label{eqbk1}
\end{equation}
\end{cor}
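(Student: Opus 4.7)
The plan is to read off $\textup{H}^0(G,M)$ directly from the projective resolution in Lemma \ref{lot3}, using the general formula $\textup{Ext}_{R_G}^0(l^\infty_R(X),M)=\textup{ker}\partial_1^*$.

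First I would apply $\textup{Hom}_{R_G}(-,M)$ to the initial segment $\bigoplus_{i=1}^n R_G\xrightarrow{\partial_1} R_G\xrightarrow{\varepsilon}l^\infty_R(X)\to 0$ of that resolution, obtaining the cochain complex whose $0$-th cohomology computes $\textup{H}^0(G,M)$. By the definition in section \ref{sintr} (and since this complex begins with $\textup{Hom}_{R_G}(R_G,M)$ in degree $0$), we have $\textup{H}^0(G,M)=\textup{ker}\partial_1^*$, where $\partial_1^*(\sigma)=\sigma\circ\partial_1$.

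Next I would use the standard identifications of Hom-groups out of free modules. The map $\sigma\in\textup{Hom}_{R_G}(R_G,M)\mapsto\sigma(1_X)\in M$ is an isomorphism of abelian groups, with inverse $\xi\mapsto(a\mapsto a\xi)$. Likewise, $\tau\in\textup{Hom}_{R_G}\mleft(\bigoplus_{i=1}^n R_G,M\mright)\mapsto(\tau(1_X[i]))_{i=1}^n\in M^n$ is an isomorphism. Under these identifications, the map $\partial_1^*$ becomes the map $M\to M^n$ sending $\xi\mapsto((\alpha_{\varphi_i}-1_{\textup{im}\varphi_i})\xi)_{i=1}^n$, because
\begin{equation*}
(\sigma\circ\partial_1)(1_X[i])=\sigma(\alpha_{\varphi_i}-1_{\textup{im}\varphi_i})=(\alpha_{\varphi_i}-1_{\textup{im}\varphi_i})\sigma(1_X).
\end{equation*}

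Taking the kernel of this map yields exactly the set \eqref{eqbk1}, which gives the desired identification $\textup{H}^0(G,M)\simeq\{\xi\in M\mid (\alpha_{\varphi_i}-1_{\textup{im}\varphi_i})\xi=0\ \forall i\}$. There is no real obstacle here; the entire argument is a formal unwinding of Lemma \ref{lot3} and the general Ext machinery, and the only thing to be mildly careful about is that the $R_G$-linearity of $\sigma$ is used to pull the element $\alpha_{\varphi_i}-1_{\textup{im}\varphi_i}$ outside and act on $\sigma(1_X)=\xi$.
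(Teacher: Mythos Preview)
Your proof is correct and follows essentially the same approach as the paper: both compute $\textup{H}^0(G,M)=\textup{ker}\partial_1^*$ from the resolution of Lemma~\ref{lot3}, identify $\textup{Hom}_{R_G}(R_G,M)$ with $M$ via $\sigma\mapsto\sigma(1_X)$, and then read off the kernel condition $(\alpha_{\varphi_i}-1_{\textup{im}\varphi_i})\xi=0$. Your write-up is in fact slightly more explicit in spelling out the identification $\textup{Hom}_{R_G}\mleft(\bigoplus_i R_G,M\mright)\simeq M^n$ and the resulting formula for $\partial_1^*$.
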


\begin{proof}
Consider the $R_G$-projective resolution of $l^\infty_R(X)$ as in Lemma \ref{lot3}. The cohomology group $\textup{H}^0(G,M)$ is the kernel of 
\begin{equation*}
    \partial_1^*=-\circ\partial_1:\textup{Hom}_{R_G}(R_G,M)\to\textup{Hom}_{R_G}\mleft(\bigoplus_{i=1}^n R_G,M\mright).
\end{equation*}
We identify $\textup{Hom}_{R_G}(R_G,M)$ with $M$ by the map $\sigma\in \textup{Hom}_{R_G}(R_G,M)\mapsto \sigma(1_X)\in M$. Then $\xi\in M$ is in $\textup{ker}\partial_1^*$ if and only if $(\alpha_{\varphi_i}-1_{\textup{im}\varphi_i})\xi=0$ for every $i$. Hence $\textup{H}^0(G,M)$ is identified with the set \eqref{eqbk1}.
\end{proof}

\subsection{Cohomological dimension}\label{sbk2}

Recall that the $R$-cohomological dimension of $G$ is denoted by $\textup{cd}_R(G)$ (Definiton \ref{intr13}). Note that $\textup{cd}_R(G)=\textup{pd}_{R_G}(l^\infty_R(X))$ holds by Proposition \ref{pre2}, and $\textup{cd}_R(G)\leq \textup{cd}_\mathbb{Z}(G)$ holds by Lemma \ref{bk11}. First we compute the cohomological dimension of some examples.

\begin{exm}
As in Example \ref{intr9}, suppose that a group $\Gamma$ with a finite generating set $S\subset \Gamma\setminus\{e\}$ acts on a set $X$ freely, and set
\begin{equation*}
    G=\left\{(x,s^{\pm 1}x)\in X\times X\:\middle|\: x\in X,\ s\in S\right\}.
\end{equation*}
Recall that the algebra $R_G$ is isomorphic to the crossed product $R\Gamma\ltimes l^\infty_R (X)$. 

Now suppose that $\textup{cd}_R(\Gamma)\leq n$. This implies that there exists an $R\Gamma$-projective resolution
\begin{equation*}
    0\to P_{n}\to \cdots \to P_0\to R\to 0
\end{equation*}
of the trivial left $R\Gamma$-module $R$. Then the sequence
\begin{equation}
    0\to l^\infty_R(X)\otimes_RP_{n} \to \cdots \to l^\infty_R(X)\otimes_RP_0\to l^\infty_R(X)\to 0 \label{eqbk5}
\end{equation}
is exact by Lemma \ref{pre3}. Note that we have $R_G\simeq l^\infty_R(X)\otimes_RR\Gamma$ as right $R\Gamma$-modules. Then the sequence
\begin{equation*}
    0\to R_G\otimes_{R\Gamma} P_{n} \to \cdots \to R_G\otimes_{R\Gamma}P_0\to l^\infty_R(X)\to 0,
\end{equation*}
is an $R_G$-projective resolution since it is identified with the exact sequence \eqref{eqbk5}. Hence $\textup{cd}_R(G)\leq n$ holds.
\end{exm}

\begin{lem}\label{bk4}
If $G$ is acyclic, then we have $\textup{cd}_R(G)\leq 1$.
\end{lem}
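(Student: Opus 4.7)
The plan is to produce an $R_G$-projective resolution of $l^\infty_R(X)$ of length one, which by Proposition \ref{pre2} gives $\textup{cd}_R(G) \leq 1$. I would refine the resolution in Lemma \ref{lot3}: fix $\{\varphi_i\}_{i=1}^n \subset \llangle G \rrangle$ with $G = \bigsqcup_{i=1}^n(\textup{graph}\varphi_i \sqcup \textup{graph}\varphi_i^{-1})$ as there, and replace $\bigoplus_i R_G$ by
$$P = \bigoplus_{i=1}^n R_G \cdot 1_{\textup{im}\varphi_i},$$
which is projective by Remark \ref{bk17}, together with $\partial(a[i]) = a(\alpha_{\varphi_i} - 1_{\textup{im}\varphi_i})$. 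Since $\alpha_{\varphi_i}$ is supported on $\textup{im}\varphi_i \times \textup{dom}\varphi_i$, we have $a\alpha_{\varphi_i} = (a \cdot 1_{\textup{im}\varphi_i})\alpha_{\varphi_i}$, so the image of $\partial$ equals the image of the $\partial_1$ in Lemma \ref{lot3}, namely $\textup{ker}\,\varepsilon$. Thus $P \xrightarrow{\partial} R_G \xrightarrow{\varepsilon} l^\infty_R(X) \to 0$ is exact, and the task reduces to showing that $\partial$ is injective.

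For injectivity I would translate $\textup{ker}\,\partial$ into a space of finitely supported flows on the edges of $G$. Let $\vec{G} = \bigsqcup_{i=1}^n \textup{graph}\varphi_i$; each unoriented edge of $G$ corresponds to a unique $\vec{e} \in \vec{G}$, with source $s(\vec{e}) = w \in \textup{dom}\varphi_{i(\vec{e})}$ and target $t(\vec{e}) = \varphi_{i(\vec{e})} w \in \textup{im}\varphi_{i(\vec{e})}$, where $i(\vec{e})$ is the index with $\vec{e} \in \textup{graph}\varphi_{i(\vec{e})}$. Given $\sum_i a_i[i] \in \textup{ker}\,\partial$ with $a_i \in R_G \cdot 1_{\textup{im}\varphi_i}$, define
$$b_{\vec{e}}(x) := a_{i(\vec{e})}(x, t(\vec{e})).$$
Evaluating the kernel identity at $(x, z)$ yields
$$\sum_{i: z \in \textup{dom}\varphi_i} a_i(x, \varphi_i z) = \sum_{i: z \in \textup{im}\varphi_i} a_i(x, z),$$
which, after identifying both sides with sums over oriented edges incident to $z$, is exactly the conservation law
$$\sum_{\vec{e}: s(\vec{e}) = z} b_{\vec{e}}(x) = \sum_{\vec{e}: t(\vec{e}) = z} b_{\vec{e}}(x).$$

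Finally I would invoke acyclicity. For each fixed $x$, the flow $\vec{e} \mapsto b_{\vec{e}}(x)$ has finite support, since every $a_i$ lies in some $R_G^{k_i}$, so its support edges span a finite subforest $F$ of the tree $[x]_G$. If $F$ were non-empty, a leaf $v$ of $F$ would meet $F$ in a single edge $\vec{e}_0$; conservation at $v$ then forces $b_{\vec{e}_0}(x) = 0$, contradicting $\vec{e}_0 \in F$. Hence every $b_{\vec{e}}(x)$ vanishes, each $a_i$ is zero, and $\partial$ is injective. The main obstacle is simply to translate the algebraic kernel equation into a combinatorial conservation law on oriented edges — once that bookkeeping is set up, acyclicity of $G$ kills the flow by the standard leaf argument.
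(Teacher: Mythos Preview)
Your proof is correct and follows essentially the same route as the paper: both replace $\bigoplus_i R_G$ by $P=\bigoplus_i R_G\cdot 1_{\textup{im}\varphi_i}$, invoke Lemma~\ref{lot3} so that only injectivity of $\partial$ remains, and then fix $x\in X$ to reduce to injectivity of the simplicial boundary map on the tree $([x]_G,G|_{[x]_G})$. The only cosmetic difference is that the paper simply names this map as ``the boundary operator of the simplicial chain complex'' and cites acyclicity, whereas you unpack it as a finitely supported flow with a conservation law and run the leaf-pruning argument explicitly.
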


\begin{proof}
Take $\{\varphi_i\}_{i=1}^n\subset\llangle G\rrangle$ so that $G=\bigsqcup_{i=1}^n\mleft(\textup{graph}\varphi_i\sqcup\textup{graph}\varphi_i^{-1}\mright)$. We will show that the sequence
\begin{equation*}
    0\to \bigoplus_{i=1}^n R_G\cdot 1_{\textup{im}\varphi_i}\xrightarrow{\partial_1} R_G\xrightarrow{\varepsilon} l^\infty_R(X)\to 0
\end{equation*}
given by $\varepsilon(1_X)=1_X$ and $\partial_1(1_{\textup{im}\varphi_i}[i])=\alpha_{\varphi_i}-1_{\textup{im}\varphi_i}$ is an $R_G$-projective resolution. By Lemma \ref{lot3}, it suffices to show that $\partial_1$ is injective.

Let $\sum_{i=1}^na_i[i]\in \bigoplus_{i=1}^n R_G\cdot 1_{\textup{im}\varphi_i}$. Fix $x\in X$. The set
\begin{equation}
    \bigsqcup_{i=1}^n\left\{(y,\varphi_i^{-1}y)\mid y\in [x]_G\cap \textup{im}\varphi_i\right\} \label{eqbk6}
\end{equation}
is identified with the set of $1$-simplices of the tree $([x]_G,G|_{[x]_G})$. Let $b_x$ be the finitely supported function on set \eqref{eqbk6} is defined by
\begin{equation*}
    (y,\varphi_i^{-1}(y))\mapsto a_i(x,y)
\end{equation*}
for $1\leq i\leq n$, and let $c_x$ be the finitely supported function on $[x]_G$ defined by
\begin{equation*}
    y\in [x]_G\mapsto\partial_1\mleft(\sum_{i=1}^na_i[i]\mright)(x,y)=\sum_{i=1}^n \mleft( a_i(x,\varphi_iy)1_{\textup{dom}\varphi_i}(y)-a_i(x,y)1_{\textup{im}\varphi_i}(y)\mright).
\end{equation*}
Then the map $b_x\mapsto c_x$ is exactly the boundary operator of the simplicial chain complex. Since $G|_{[x]_G}$ is a tree, this is injective for every $x$. Hence $\partial_1$ is also injective.
\end{proof}

Next we investigate some properties related to the condition $\textup{cd}_R(G)\leq 1$.

\begin{lem}\label{bk6}
If $\textup{cd}_R(G)\leq 1$ and $\textup{H}^1(G,R_G)=0$, then we have $\textup{cd}_R(G)=0$.
\end{lem}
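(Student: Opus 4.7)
The plan is to apply Lemma \ref{bk7} directly with $R$ replaced by $R_G$ and $N=l^\infty_R(X)$. Recall from Proposition \ref{pre2} and the paragraph following it that $\textup{cd}_R(G)=\textup{pd}_{R_G}(l^\infty_R(X))$, and by definition $\textup{H}^1(G,R_G)=\textup{Ext}^1_{R_G}(l^\infty_R(X),R_G)$.

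So I would proceed as follows. The assumption $\textup{cd}_R(G)\leq 1$ gives $\textup{pd}_{R_G}(l^\infty_R(X))\leq 1$, and the assumption $\textup{H}^1(G,R_G)=0$ gives $\textup{Ext}^1_{R_G}(l^\infty_R(X),R_G)=0$. Applying Lemma \ref{bk7} with $n=1$ yields $\textup{pd}_{R_G}(l^\infty_R(X))\leq 0$, i.e.\ $l^\infty_R(X)$ is a projective left $R_G$-module. Translating back via the identification $\textup{cd}_R(G)=\textup{pd}_{R_G}(l^\infty_R(X))$, this gives $\textup{cd}_R(G)=0$, as required.

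There is really no obstacle here, since Lemma \ref{bk7} has already done all the homological work: the free module $F$ chosen to surject onto an arbitrary $M$ together with the naturality diagram reduces vanishing of $\textup{Ext}^1_{R_G}(l^\infty_R(X),M)$ for arbitrary $M$ to vanishing with coefficients in $R_G$, exploiting that $\textup{Ext}^1(l^\infty_R(X),-)$ commutes with direct sums in degree equal to the projective dimension. The only thing to verify is that the hypotheses of Lemma \ref{bk7} are genuinely met in our setting, which amounts to the two identifications noted above.
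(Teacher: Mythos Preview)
Your proposal is correct and matches the paper's proof exactly: the paper simply writes ``This follows from Lemma~\ref{bk7},'' which is precisely the application you spell out with $R$ replaced by $R_G$, $N=l^\infty_R(X)$, and $n=1$.
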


\begin{proof}
This follows from Lemma \ref{bk7}.
\end{proof}

\begin{lem}\label{bk5}
We have $\textup{cd}_R(G)=0$ if and only if $E_G$ is uniformly finite, i.e., $\sup_{x\in X}|[x]_G|<\infty$.
\end{lem}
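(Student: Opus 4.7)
The plan is to unpack $\textup{cd}_R(G)=0$ as $\textup{pd}_{R_G}(l^\infty_R(X))=0$ (noted at the start of \S\ref{sbk2}), which simply says $l^\infty_R(X)$ is projective as a left $R_G$-module. Applying this to the resolution of Lemma~\ref{lot3}, projectivity is equivalent to the existence of an $R_G$-linear splitting of the augmentation $\varepsilon:R_G\to l^\infty_R(X)$. Since $l^\infty_R(X)\simeq R_G/\textup{ker}\,\varepsilon$ is generated by $1_X$ over $R_G$, such a splitting is determined by $\xi=\sigma(1_X)\in R_G$, and exists iff one can find $\xi\in R_G$ satisfying $\varepsilon(\xi)=1_X$ together with $(\alpha_{\varphi_i}-1_{\textup{im}\varphi_i})\xi=0$ for every $i$; equivalently, by Corollary~\ref{bk15}, $\xi\in\textup{H}^0(G,R_G)$ lifting $1_X$.

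The key combinatorial step is that since $\{\varphi_i^{\pm 1}\}$ generates $E_G$, the relations $(\alpha_{\varphi_i}-1_{\textup{im}\varphi_i})\xi=0$ force $\xi(x,y)$ to be constant in $x$ on each $E_G$-class. Setting $f(y):=\xi(y,y)\in R$, this gives $\xi(x,y)=f(y)$ for every $(x,y)\in E_G$, and the condition $\varepsilon(\xi)=1_X$ becomes $\sum_{z\in[x]_G}f(z)=1$ for all $x\in X$. So splittings of $\varepsilon$ correspond exactly to functions $f:X\to R$ summing to $1$ on each class whose associated $\xi$ lies in some $R_G^k$.

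For the forward direction, suppose such $\xi\in R_G^k$ exists. For every $E_G$-class $\omega$, the sum condition forces some $z\in\omega$ with $f(z)\neq 0$; then $\xi(x,z)=f(z)\neq 0$ for all $x\in[z]_G$, so the support condition $\xi\in R_G^k$ yields $[z]_G\subseteq B_G(k;z)$. By the uniform degree bound on $G$, $|B_G(k;z)|$ is uniformly bounded (as in the counting of Lemma~\ref{lot4}), so $\sup_\omega|\omega|<\infty$. For the converse, assume $\sup_x|[x]_G|\le N$; each class is $G$-connected on at most $N$ vertices, hence has $G$-diameter at most $N-1$, so $E_G=G^{N-1}$. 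Pick a transversal $A\subset X$ for $E_G$ via the axiom of choice (no Borel structure is needed since $(X,G)$ is abstract) and set $\xi=1_{E_G\cap(X\times A)}$. This is $\{0,1\}$-valued and supported in $G^{N-1}$, so $\xi\in R_G^{N-1}$, and the relations and augmentation condition are immediate from $f=1_A$. This provides the splitting, giving $\textup{cd}_R(G)=0$. The only non-routine point is the combinatorial identification of splittings with invariant selectors; both implications are then short computations against the degree bound.
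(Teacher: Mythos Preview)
Your proof is correct and follows essentially the same route as the paper: both directions hinge on the fact that $\textup{cd}_R(G)=0$ amounts to the existence of an $R_G$-linear section $\sigma$ of $\varepsilon$, that $\xi=\sigma(1_X)$ is invariant in the first coordinate along each $E_G$-class, and that a non-vanishing value of $\xi(\cdot,z)$ then pins down the whole class inside a ball of radius $k$ when $\xi\in R_G^k$; for the converse both you and the paper build the section from a transversal $A$ via $\xi=1_{E_G\cap(X\times A)}$. The only cosmetic difference is that you invoke Lemma~\ref{lot3} and Corollary~\ref{bk15} to identify the splitting condition with $\xi\in\textup{H}^0(G,R_G)$, whereas the paper verifies the invariance $\sigma(1_X)(x,z)=\sigma(1_X)(y,z)$ by a direct computation with the elements $1_{\{(y,x)\}}$.
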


\begin{proof}
Suppose that $E_G$ is uniformly finite. Take $Y\subset X$ so that $|Y\cap [x]_G|=1$ for every $x\in X$. Now $l^\infty_R(X)$ is isomorphic to $R_G\cdot 1_Y$ as left $R_G$-modules by the map
\begin{equation*}
    f\in l^\infty_R(X)\mapsto ((x,y)\in E_G\mapsto f(x)1_Y(y)).
\end{equation*}
Hence the left $R_G$-module $l^\infty_R(X)$ is projective (see Remark \ref{bk17}), which means that $\textup{cd}_R(G)=0$.

To show the converse, suppose that $\textup{cd}_R(G)=0$. Then since the left $R_G$-module $l^\infty_R(X)$ is projective, we can take a semi-inverse of the map $\varepsilon:R_G\to l^\infty_R(X),\ 1_X\mapsto 1_X$, that is, there exists $\sigma\in \textup{Hom}_{R_G}(l^\infty_R(X),R_G)$ such that $\varepsilon\circ \sigma=\textup{id}_{l^\infty_R(X)}$. Fix $x\in X$. Then for all $y,z\in [x]_G$, we have
\begin{align*}
    \sigma(1_X)(x,z)=(1_{\{(y,x)\}}\sigma(1_X))(y,z)=\sigma(1_{\{(y,x)\}*}1_X)(y,z)\\
    =\sigma(1_{\{y\}})(y,z)=(1_{\{(y,y)\}}\sigma(1_X))(y,z)=\sigma(1_X)(y,z).
\end{align*}
Moreover, there exists $z\in [x]_G$ such that $\sigma(1_X)(x,z)\neq 0$ since $\varepsilon\circ \sigma(1_X)=1_X$ implies that $\sum_{z\in [x]_G}\sigma(1_X)(x,z)=1$. Then we have $\sigma(1_X)(y,z)\neq 0$ for all $y\in [x]_G$. Since $\sigma(1_X)\in R_G$, this implies that $\sup_{x\in X}\textup{diam}_G([x]_G)<\infty$. Hence $E_G$ is uniformly finite. 
\end{proof}

\begin{lem}\label{bk8}
If $\textup{cd}_R(G)\leq 1$, then $\textup{H}^1(G,R_G)$ is finitely generated as a right $R_G$-module.
\end{lem}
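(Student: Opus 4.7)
The plan is to exploit the specific finite-rank projective resolution produced by Lemma \ref{lot3}, together with the hypothesis $\textup{cd}_R(G)\leq 1$, to obtain a length-one projective resolution of $l^\infty_R(X)$ in which the first term is finitely generated; applying $\textup{Hom}_{R_G}(-,R_G)$ will then realize $\textup{H}^1(G,R_G)$ as a quotient of a finitely generated right $R_G$-module.

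First I would fix $\{\varphi_i\}_{i=1}^n\subset\llangle G\rrangle$ with $G=\bigsqcup_{i=1}^n(\textup{graph}\varphi_i\sqcup\textup{graph}\varphi_i^{-1})$ and invoke Lemma \ref{lot3} to obtain the projective resolution whose last piece is
\begin{equation*}
    \bigoplus_{i=1}^n R_G \xrightarrow{\partial_1} R_G \xrightarrow{\varepsilon} l^\infty_R(X)\to 0.
\end{equation*}
The key step is to observe that $\textup{cd}_R(G)=\textup{pd}_{R_G}(l^\infty_R(X))\leq 1$ combined with condition (iv) of Proposition \ref{pre2} forces $\ker\varepsilon=\textup{im}\partial_1$ to be a projective left $R_G$-module. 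Consequently the short exact sequence
\begin{equation*}
    0\to \ker\partial_1 \to \bigoplus_{i=1}^n R_G \xrightarrow{\partial_1} \ker\varepsilon \to 0
\end{equation*}
splits, so that $\ker\varepsilon$ is a direct summand of the finitely generated free module $\bigoplus_{i=1}^n R_G$.

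Next I would apply the functor $\textup{Hom}_{R_G}(-,R_G)$ to the length-one resolution $0\to \ker\varepsilon \to R_G \to l^\infty_R(X)\to 0$, yielding
\begin{equation*}
    \textup{H}^1(G,R_G)=\textup{coker}\bigl(\textup{Hom}_{R_G}(R_G,R_G)\to \textup{Hom}_{R_G}(\ker\varepsilon, R_G)\bigr).
\end{equation*}
Since the $\textup{Hom}$-functor sends direct summands to direct summands and respects the right $R_G$-action on the target, $\textup{Hom}_{R_G}(\ker\varepsilon, R_G)$ is a direct summand of $\textup{Hom}_{R_G}\bigl(\bigoplus_{i=1}^n R_G,R_G\bigr)\simeq \bigoplus_{i=1}^n R_G$ as right $R_G$-modules, hence finitely generated. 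Then $\textup{H}^1(G,R_G)$ is a quotient of a finitely generated right $R_G$-module, therefore itself finitely generated.

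The only delicate point is verifying that the splitting of $\ker\varepsilon$ off of $\bigoplus_{i=1}^n R_G$ is compatible with the right $R_G$-module structure on $\textup{Hom}_{R_G}(-,R_G)$ used in the definition of the cohomology groups, but this is automatic since the direct sum decomposition is of left $R_G$-modules and the right action on $\textup{Hom}_{R_G}(N,R_G)$ is induced from the right action on $R_G$ alone, independently of the splitting of $N$. No further ingredients should be needed beyond Lemma \ref{lot3} and Proposition \ref{pre2}.
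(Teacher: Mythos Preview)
Your proof is correct and follows essentially the same approach as the paper: both use Lemma \ref{lot3} and Proposition \ref{pre2}(iv) to see that $\textup{im}\partial_1=\ker\varepsilon$ is projective, then compute $\textup{H}^1(G,R_G)$ via the length-one resolution $0\to\ker\varepsilon\to R_G\to l^\infty_R(X)\to 0$ and observe that $\textup{Hom}_{R_G}(\ker\varepsilon,R_G)$ is a quotient (equivalently, a direct summand) of $\textup{Hom}_{R_G}\bigl(\bigoplus_{i=1}^n R_G,R_G\bigr)\simeq\bigoplus_{i=1}^n R_G$. The paper phrases this via the section $\sigma:\textup{im}\partial_1\to\bigoplus_{i=1}^n R_G$ inducing a surjection $-\circ\sigma$, while you phrase it as a direct-summand argument, but these are the same splitting.
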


\begin{proof}
Take an $R_G$-projective resolution
\begin{equation*}
    \cdots\to \bigoplus_{i=1}^n R_G \xrightarrow{\partial_1} R_G\xrightarrow{\varepsilon} l^\infty_R(X)\to 0
\end{equation*}
as in Lemma \ref{lot3}. By $\textup{cd}_R(G)\leq 1$, the left $R_G$-module $\textup{im}\partial_1$ is projective by Proposition \ref{pre2} (iv), and thus we have another $R_G$-projective resolution
\begin{equation*}
    0\to \textup{im}\partial_1 \hookrightarrow R_G\xrightarrow{\varepsilon} l^\infty_R(X)\to 0.
\end{equation*}
By the computation of $\textup{H}^1(G,R_G)$ using this projective resolution, it suffices to show that $\textup{Hom}_{R_G}(\textup{im}\partial_1,R_G)$ is finitely generated as a right $R_G$-module. Since $\textup{im}\partial_1$ is projective, there exists a homomorphism $\sigma:\textup{im}\partial_1\to \bigoplus_{i=1}^n R_G$ such that $\partial_1\circ \sigma=\textup{id}_{\textup{im}\partial_1}$. Then there exists a surjective right $R_G$-homomorphism 
\begin{equation*}
    -\circ \sigma:\textup{Hom}_{R_G}\mleft(\bigoplus_{i=1}^n R_G,R_G\mright)\to \textup{Hom}_{R_G}(\textup{im}\partial_1,R_G).
\end{equation*}
Note that $\textup{Hom}_{R_G}(\bigoplus_{i=1}^n R_G,R_G)\simeq \bigoplus_{i=1}^n R_G$ holds as right $R_G$-modules, and thus it is finitely generated. Hence the right $R_G$-module $\textup{Hom}_{R_G}(\textup{im}\partial_1,R_G)$ is also finitely generated. The proof is done.
\end{proof}

The following two lemmas will be used in section \ref{smos2} to show that the condition $\textup{cd}_R(G)\leq 1$ is preserved under some operations.

\begin{lem}\label{lot2}
If $H$ is a coarsely embedded subgraph of $G$, then $\textup{cd}_R(H)\leq \textup{cd}_R(G)$ holds. In particular, if $G=H\ast K$ with $H$ and $K$ subgraphs of $G$, then we have $\textup{cd}_R(H)\leq \textup{cd}_R(G)$.
\end{lem}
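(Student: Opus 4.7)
The plan is to reduce the claim to the following: $R_G$ is projective as a left $R_H$-module. Granted this, any $R_G$-projective resolution of $l^\infty_R(X)$ of length $n = \textup{cd}_R(G)$ is, after restriction of scalars, a resolution by $R_H$-projective modules: each term, being a direct summand of a free $R_G$-module, is a direct summand of a free $R_H$-module (using that $R_G$ itself is $R_H$-projective). This witnesses $\textup{cd}_R(H) \leq n$.

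The key observation is that the restriction map $R_G \to R_H$, $a \mapsto a|_{E_H}$, is a left $R_H$-linear splitting of the inclusion $R_H \hookrightarrow R_G$. Indeed, for $b \in R_H$ and $a \in R_G$ one checks $(ba)|_{E_H} = b\cdot (a|_{E_H})$: whenever $b(x,z) \neq 0$ the element $z$ lies in $[x]_H$, so for $(x,y) \in E_H$ it automatically satisfies $(z,y) \in E_H$, and for $(x,y) \notin E_H$ each summand defining $(b\cdot(a|_{E_H}))(x,y)$ vanishes since $(z,y) \notin E_H$. Thus $R_G = R_H \oplus M$ as left $R_H$-modules, where $M$ consists of functions in $R_G$ supported on $E_G \setminus E_H$, and it suffices to show $M$ is $R_H$-projective. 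For this I would choose a family $\{\varphi_i\}_{i \in I} \subset \llangle G \rrangle$ whose graphs partition $E_G \setminus E_H$, organized by $d_G$-displacement and decomposed within each displacement stratum by a Feldman--Moore-type argument. For each such $\varphi_i$ the left $R_H$-submodule $R_H\alpha_{\varphi_i} \subset M$ is isomorphic via $b\alpha_{\varphi_i} \mapsto b\cdot 1_{\textup{im}\varphi_i}$ to the projective cyclic module $R_H\cdot 1_{\textup{im}\varphi_i}$ of Remark \ref{bk17}, and I would exhibit $M = \bigoplus_i R_H\alpha_{\varphi_i}$.

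The main technical hurdle is verifying that this direct sum is honest---that every $a \in R_G^k \cap M$ admits a finite $R_H$-expansion along the $\alpha_{\varphi_i}$'s. This is where the coarse embedding estimate $G^k \cap E_H \subset H^{l(k)}$, combined with the uniform degree bound, is decisive, forcing the support of $a$ to meet only boundedly many displacement classes of the partition (so that finitely many of the $\varphi_i$'s suffice). For the ``in particular'' clause, I would verify that $G = H \ast K$ implies $H$ is a coarsely embedded subgraph of $G$: any $G$-path of length $\leq k$ between $E_H$-related endpoints, grouped into maximal alternating $H$- and $K$-segments, is forced by the freely intersecting property of Definition \ref{intr14} to reduce to a pure $H$-path of length bounded by a function of $k$, yielding the desired estimate.
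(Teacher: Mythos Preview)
Your overall strategy---reduce to showing $R_G$ is $R_H$-projective, then deduce the cohomological dimension inequality---is exactly the paper's approach, and your splitting $R_G = R_H \oplus M$ via the restriction map is a clean observation. The gap is in the proposed decomposition $M = \bigoplus_i R_H\alpha_{\varphi_i}$ with $\{\textup{graph}\,\varphi_i\}$ partitioning $E_G\setminus E_H$. The sum does span $M$, but it is \emph{not} direct in general. The support of $R_H\alpha_{\varphi_i}$ is the left $E_H$-saturation $F_i = \{(x,y): y\in\textup{dom}\,\varphi_i,\ (x,\varphi_i y)\in E_H\}$, and these sets overlap whenever two of the points $\varphi_i y$ (for a fixed $y$) land in the same $E_H$-class. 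Concretely: take $X=\{a,b,c,d\}$, $G$ the complete graph, $H$ with edges $\{a\text{--}b,\ c\text{--}d\}$; partitioning $E_G\setminus E_H$ by four bijections $\varphi_1,\dots,\varphi_4$ in the obvious way, one finds $R_H\alpha_{\varphi_1}=R_H\alpha_{\varphi_2}$ (both equal all $R$-valued functions on $\{a,b\}\times\{c,d\}$), so the sum collapses.

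The correct choice is not one $\varphi_i$ per point of $E_G\setminus E_H$, but one $\psi_i$ per $E_H$-coset in each column $[y]_G$: you want the saturations $F_i = E_H\cdot\textup{graph}\,\psi_i$ themselves to partition $E_G$. This is what the paper does by an inductive thinning: starting from any sequence $\{\varphi_i\}$ with $\bigcup_i\textup{graph}\,\varphi_i$ exhausting $E_G$, one sets $\psi_0=\varphi_0$ and removes from each later $\textup{graph}\,\varphi_i$ every pair already lying in $\bigcup_{j<i}F_j$. Disjointness of the $F_i$'s is then automatic, and the coarse-embedding estimate $G^k\cap E_H\subset H^{l(k)}$ is used (via Remark~\ref{pre1}) to check that each coefficient $a|_{F_j}\cdot\alpha_{\psi_j^{-1}}$ actually lands in $R_H$. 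Your sketch of the ``in particular'' clause via the freely intersecting property is fine.
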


\begin{proof}
Note that $R_H$ is a subalgebra of $R_G$ by Remark \ref{pre1}. It suffices to show that $R_G$ is projective as a left $R_H$-module. Indeed, if this is true, then any $R_G$-projective resolution is also an $R_H$-projective resolution, and the inequality $\textup{cd}_R(H)\leq \textup{cd}_R(G)$ follows from Proposition \ref{pre2} (iii). 

Take $\{\varphi_i\}_{i=0}^\infty\subset\llangle G\rrangle$ so that for every $k\in\mathbb{Z}_{\geq 0}$, there exists $n\in\mathbb{Z}_{\geq 0}$ such that $G^k=\bigcup_{i=1}^n\textup{graph}\varphi_i$. Then we define $\{\psi_i\}_{i=0}^\infty$ as follows: Set $\psi_0=\varphi_0$. For $i\geq 1$, let $\psi_i$ be the restriction of $\varphi_i$ such that
\begin{equation*}
    \textup{graph}\psi_i=\textup{graph}\varphi_i\setminus\bigcup_{j=0}^{i-1}\{(x,y)\in E_G\mid y\in\textup{dom}\varphi_j,\ (x,\varphi_jy)\in E_H\}.
\end{equation*}
Then the subsets
\begin{equation*}
    F_i=\{(x,y)\in E_G\mid y\in\textup{dom}\psi_i,\ (x,\psi_iy)\in E_H\}
\end{equation*}
satisfy
\begin{equation*}
    \bigcup_{j=0}^{i}\{(x,y)\in E_G\mid y\in\textup{dom}\varphi_j,\ (x,\varphi_jy)\in E_H\}=\bigsqcup_{j=0}^i F_i
\end{equation*}
for every $i\geq 0$ by construction, and thus we have $E_G=\bigsqcup_{i=0}^\infty F_i$. Now we show that $R_G=\bigoplus_{i=0}^\infty R_H\cdot \alpha_{\psi_i}$. Since all functions in $R_H\cdot \alpha_{\psi_i}$ are supported on $F_i$ and $\{F_i\}_i$ are disjoint, the $R_H$-sumbodules $\{R_H\cdot \alpha_{\psi_i}\}_{i=0}^\infty$ form a direct sum in $R_G$. Then it suffices to show that $\{\alpha_{\varphi_i}\}_{i=0}^\infty\subset \bigoplus_{i=0}^\infty R_H\cdot \alpha_{\psi_i}$. For $i\geq 0$, we have $\alpha_{\varphi_i}=\sum_{j=0}^i\alpha_{\varphi_i}|_{F_j}$ since $\textup{graph}\varphi_i\subset \bigsqcup_{j=0}^i F_i$. Note that if $a\in R_G$ is supported on $F_j$, then $a\alpha_{\psi_j^{-1}}$ is supported on $E_H$, and thus $a\alpha_{\psi_j^{-1}}\in R_H$ holds by Remark \ref{pre1}. Hence we have
\begin{equation*}
    \alpha_{\varphi_i}=\sum_{j=0}^i\mleft(\mleft(\alpha_{\varphi_i}|_{F_j}\mright)\alpha_{\psi_j^{-1}}\mright)\alpha_{\psi_j} \in \bigoplus_{j=0}^i R_H\cdot \alpha_{\psi_j}
\end{equation*}
for every $i\geq 0$, and thus $R_G=\bigoplus_{i=0}^\infty R_H\cdot \alpha_{\psi_i}$ as required. This implies that $R_G$ is projective as a left $R_H$-module.
\end{proof}

\begin{lem}\label{bk9}
If a graph $(Y,G')$ with uniformly bounded degrees is quasi-isometric to $(X,G)$, then $\textup{cd}_R(G)= \textup{cd}_R(G')$ holds.
\end{lem}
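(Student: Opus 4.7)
The plan is to build a common ambient graph on $X\sqcup Y$ and then reduce the quasi-isometry invariance of $\textup{cd}_R$ to a single ``quasi-dense subset'' lemma, which I will establish through a Morita-equivalence argument on the algebras.

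Take a quasi-isometric inverse $\lambda:(Y,G')\to(X,G)$ of $\gamma$. On $Z=X\sqcup Y$, define
\[
K=G\cup G'\cup\{(x,\gamma(x)),(\gamma(x),x):x\in X\}\cup\{(y,\lambda(y)),(\lambda(y),y):y\in Y\}.
\]
The fibers $\gamma^{-1}(y)$ and $\lambda^{-1}(x)$ lie in balls of bounded $G'$- and $G$-radius by the quasi-isometry bounds; since $G$ and $G'$ have uniformly bounded degrees, these fibers have uniformly bounded cardinalities, so $K$ has uniformly bounded degrees. Using the projection $p:Z\to X$ given by $p|_X=\mathrm{id}$ and $p|_Y=\lambda$, every $K$-edge $(z,z')$ satisfies $d_G(p(z),p(z'))\leq M$ for a uniform $M$; together with $d_K\leq d_G$, this gives $d_K|_{X\times X}\asymp d_G$, and symmetrically $d_K|_{Y\times Y}\asymp d_{G'}$ via $q:Z\to Y$ with $q|_Y=\mathrm{id}$ and $q|_X=\gamma$. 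Moreover, $X$ and $Y$ are both $1$-quasi-dense in $(Z,K)$ by the $\gamma$- and $\lambda$-edges. Defining $\bar K_X=\{(x_1,x_2)\in X\times X\setminus\Delta_X:d_K(x_1,x_2)\leq C\}$ for $C$ large, we see that $\bar K_X$ is Lipschitz equivalent to $G$ on $X$, hence $\textup{cd}_R(G)=\textup{cd}_R(\bar K_X)$ by Remark \ref{pre1}; likewise $\textup{cd}_R(G')=\textup{cd}_R(\bar K_Y)$.

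It therefore suffices to prove: if $A$ is a quasi-dense subset of a bounded-degree graph $(W,K')$ and $\bar K'$ is the corresponding induced graph on $A$, then $\textup{cd}_R(K')=\textup{cd}_R(\bar K')$. A direct comparison of the filtrations $(K')^k\cap(A\times A)$ versus $(\bar K')^l$ together with the convolution formula identifies $R_{\bar K'}$ with the corner subalgebra $1_AR_{K'}1_A$. The crucial step is to verify that the idempotent $1_A\in R_{K'}$ is \emph{full}, i.e.\ $R_{K'}\cdot 1_A\cdot R_{K'}=R_{K'}$. To this end, choose $\phi:W\to A$ with $d_{K'}(w,\phi(w))$ uniformly bounded and $\phi|_A=\mathrm{id}_A$; the fibers of $\phi$ have uniformly bounded cardinality, so $W$ partitions into finitely many pieces $W_1,\dots,W_N$ on which $\phi$ is injective. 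Setting $\varphi_k=\phi|_{W_k}\in\llangle K'\rrangle$, a short computation yields $\alpha_{\varphi_k}^{-1}\cdot 1_A\cdot\alpha_{\varphi_k}=1_{W_k}$, whence $\sum_k\alpha_{\varphi_k}^{-1}\cdot 1_A\cdot\alpha_{\varphi_k}=1_W$, proving fullness. Fullness provides a Morita equivalence between $R_{K'}$ and $R_{\bar K'}$ via the bimodules $R_{K'}\cdot 1_A$ and $1_A\cdot R_{K'}$; under this equivalence, $l^\infty_R(W)$ corresponds to $1_A\cdot l^\infty_R(W)=l^\infty_R(A)$, and projective dimensions are preserved, so $\textup{cd}_R(K')=\textup{pd}_{R_{K'}}l^\infty_R(W)=\textup{pd}_{R_{\bar K'}}l^\infty_R(A)=\textup{cd}_R(\bar K')$.

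Chaining the two steps gives $\textup{cd}_R(G)=\textup{cd}_R(\bar K_X)=\textup{cd}_R(K)=\textup{cd}_R(\bar K_Y)=\textup{cd}_R(G')$, as required. The main obstacle is the Morita step: showing that $1_A$ is full and checking that the resulting equivalence sends the distinguished module $l^\infty_R(W)$ to $l^\infty_R(A)$ so that projective dimensions can be read off on either side. Once that is in place, the construction of $K$, its degree bound, and the Lipschitz equivalences are routine quasi-isometry bookkeeping via Remark \ref{pre1}.
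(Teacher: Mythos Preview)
Your argument is correct and takes a genuinely different route from the paper. The paper first reduces to the case where $\gamma:X\to Y$ is injective (by passing to a quasi-dense subset $X_0\subset X$), and then works directly with the bimodules $1_{\gamma(X)}\cdot R_{G'}$ and $R_{G'}\cdot 1_{\gamma(X)}$: it shows these are projective over $R_G$ and uses them to push projective resolutions of $l^\infty_R$ back and forth by hand, reading off the two inequalities $\textup{cd}_R(G)\leq\textup{cd}_R(G')$ and $\textup{cd}_R(G')\leq\textup{cd}_R(G)$ separately. Your proof instead builds a common ``mapping cylinder'' graph $K$ on $X\sqcup Y$ in which both $X$ and $Y$ sit as quasi-dense subsets, reduces everything to a single statement about quasi-dense subsets, and then settles that statement cleanly via Morita theory: the identification $R_{\bar K'}\cong 1_AR_{K'}1_A$ together with fullness of $1_A$ gives a category equivalence carrying $l^\infty_R(W)$ to $l^\infty_R(A)$, so projective dimensions match in one stroke. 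The paper's approach is more elementary and self-contained (no appeal to Morita equivalence), while yours is more conceptual and symmetric, and the ambient-graph reduction is a standard and reusable coarse-geometry maneuver. One small notational point: $\alpha_{\varphi_k}^{-1}$ should be $\alpha_{\varphi_k^{-1}}$, since $\alpha_{\varphi_k}$ itself is not invertible in $R_{K'}$; with that reading your fullness computation $\sum_k\alpha_{\varphi_k^{-1}}\,1_A\,\alpha_{\varphi_k}=1_W$ goes through exactly as written.
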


\begin{proof}
We can take $X_0\subset X$ so that $\gamma|_{X_0}:X_0\to Y$ is injective and $k=\sup_{x\in X}d_G(x,X_0)<\infty$. Then the graph 
\begin{equation*}
    G_0=\{(x_1,x_2)\in X_0\times X_0\mid 0<d_G(x_1,x_2)\leq 2k+1\}
\end{equation*}
generates the equivalence relation $E_G|_{X_0}$, and the inclusion map $(X_0,G_0)\to (X,G)$ is a quasi-isometry. Then $\gamma|_{X_0}:(X_0,G_0)\to (Y,G')$ is also a quasi-isometry. Hence it suffices to show that injective quasi-isometries preserve the cohomological dimension, and thus we may assume that $\gamma:X\to Y$ is injective.

The set $1_{\gamma(X)}\cdot R_{G'}$ is a left $R_G$-module in the following way: for $a\in R_G$ and $b\in 1_{\gamma(X)}\cdot R_{G'}$, the function $ab\in 1_{\gamma(X)}\cdot R_{G'}$ is defined by
\begin{equation*}
    (ab)(\gamma(x),y)=\sum_{z\in [x]_{G}}a(x,z)b(\gamma(z),y).
\end{equation*}
We claim that this left $R_G$-module is projective. Indeed, take $\{\varphi_i\}_{i=1}^n\subset\llangle G'\rrangle$ so that $\bigsqcup_{i=1}^n\textup{dom}\varphi_i=Y$ and $\bigcup_{i=1}^n\textup{im}\varphi_i\subset \gamma(X)$. Then we have 
\begin{equation*}
    1_{\gamma(X)}\cdot R_{G'}=\bigoplus_{i=1}^n1_{\gamma(X)}\cdot R_{G'}\cdot1_{\textup{dom}\varphi_i}\simeq \bigoplus_{i=1}^n R_G\cdot1_{\gamma^{-1}(\textup{im}\varphi_i)}
\end{equation*}
as left $R_G$-modules by the map
\begin{equation*}
    a\in R_G\cdot1_{\gamma^{-1}(\textup{im}\varphi_i)}\mapsto a\circ (\gamma^{-1}\times\gamma^{-1})\cdot\alpha_{\varphi_i}\in 1_{\gamma(X)}\cdot R_{G'}\cdot1_{\textup{dom}\varphi_i}.
\end{equation*}
Now if
\begin{equation*}
    0\to P_n\to \cdots \to P_0\to l^\infty_R(Y)\to 0
\end{equation*}
is an $R_{G'}$-projective resolution, then
\begin{equation*}
    0\to 1_{\gamma(X)}\cdot P_n\to \cdots \to 1_{\gamma(X)}\cdot P_0\to l^\infty_R(\gamma(X))\to 0
\end{equation*}
is an $R_G$-projective resolution of $l^\infty_R(\gamma(X))\simeq l^\infty_R(X)$. This implies that $\textup{cd}_R(G)\leq \textup{cd}_R(G')$.

Note that $R_{G'}\cdot 1_{\gamma(X)}$ is a projective right $R_G$-module in the same way as above. Now if 
\begin{equation*}
    0\to P_n\to \cdots \to P_0\to l^\infty_R(X)\to 0
\end{equation*}
is an $R_G$-projective resolution, then
\begin{align*}
    0\to \mleft(R_{G'}\cdot 1_{\gamma(X)}\mright)\otimes_{R_G}P_n\to \cdots \to \mleft(R_{G'}\cdot 1_{\gamma(X)}\mright)\otimes_{R_G}P_0\\
    \to \mleft(R_{G'}\cdot 1_{\gamma(X)}\mright)\otimes_{R_G}l^\infty_R(X)\to 0
\end{align*}
is an $R_{G'}$-projective resolution of $\mleft(R_{G'}\cdot 1_{\gamma(X)}\mright)\otimes_{R_G}l^\infty_R(X)\simeq l^\infty_R(Y)$. Hence we have $\textup{cd}_R(G')\leq \textup{cd}_R(G)$.
\end{proof}

\subsection{Structure of \texorpdfstring{\rm$\textup{H}^1(G,R_G)$}{H1(G,RG)}} \label{sbk3}

The goal of this subsection is to determine the cohomology group $\textup{H}^1(G,R_G)$.

We define the $R_G$-bimodule $W_G$ as follows:
\begin{equation*}
    W_G=\left\{\xi:E_G\rightarrow R\:\middle|\: \forall k\geq 0,\ \xi|_{G^k}\in l^\infty_R\mleft(G^k\mright)\right\},
\end{equation*}
and for $a,b\in R_G$ and $\xi\in W_G$,
\begin{equation*}
    (a\xi b)(x,y)=\sum_{z,w\in[x]_G}a(x,z)\xi(z,w)b(w,y).
\end{equation*}
Now we endow the set $\textup{Hom}_{l^\infty_R(X)}(R_G,l^\infty_R(X))$ with the structure of a left $R_G$-module such that: for $\sigma\in \textup{Hom}_{l^\infty_R(X)}(R_G,l^\infty_R(X))$ and $a,b\in R_G$,
\begin{equation*}
    (a\sigma)(b)=\sigma(ba).
\end{equation*}

\begin{rmk}\label{bk1}
For $\sigma\in\textup{Hom}_{l^\infty_R(X)}(R_G,l^\infty_R(X))$, $a\in R_G$ and $x\in X$, we have 
\begin{equation}
    \sigma(a)(x)=\sum_{y\in[x]_G}a(x,y) \sigma(1_{\{(x,y)\}})(x).\label{eqbk2}
\end{equation}
Indeed, since $1_{\{x\}}a=\sum_{y\in[x]_G}a(x,y)1_{\{(x,y)\}}$ holds, we have
\begin{align*}
    \sigma(a)(x)
    &=(1_{\{x\}}\sigma(a))(x)=\sigma(1_{\{x\}}a)(x)\\
    &=\sigma\mleft(\sum_{y\in[x]_G}a(x,y)1_{\{(x,y)\}}\mright)(x)\\
    &=\sum_{y\in[x]_G}a(x,y) \sigma(1_{\{(x,y)\}})(x).
\end{align*}
\end{rmk}

\begin{lem}
We have $W_G\simeq\textup{Hom}_{l^\infty_R(X)}(R_G,l^\infty_R(X))$ as left $R_G$-modules.
\end{lem}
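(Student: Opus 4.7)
The plan is to construct explicit mutually inverse left $R_G$-module homomorphisms between the two sides, guided by the formula in Remark \ref{bk1}.

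First, I would define $\Phi : W_G \to \textup{Hom}_{l^\infty_R(X)}(R_G, l^\infty_R(X))$ by
\[
\Phi(\xi)(a)(x) = \sum_{z \in [x]_G} a(x,z)\, \xi(z,x) = (a\xi)(x,x),
\]
where the last expression reads the diagonal entry of the formal bimodule product. To check $\Phi(\xi)(a) \in l^\infty_R(X)$, I would use that $a$ lies in some $R_G^k$, so it is supported on $G^k$ with finite image; uniformly bounded degrees force $|\{z : (x,z)\in G^k\}|$ to be bounded independently of $x$; and $\xi|_{G^k}$ has finite image by the definition of $W_G$. Hence the sum has uniformly boundedly many nonzero terms drawn from a finite set of products, so the image is finite. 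The $l^\infty_R(X)$-linearity in $a$ is immediate from $(fa)(x,z) = f(x)a(x,z)$.

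Second, I would check $R_G$-equivariance. Because the convention on the hom side is $(a'\sigma)(b) = \sigma(ba')$, the key computation is
\[
\Phi(a'\xi)(b)(x) = (b(a'\xi))(x,x) = ((ba')\xi)(x,x) = \Phi(\xi)(ba')(x) = (a'\Phi(\xi))(b)(x),
\]
which is just associativity of the bimodule structure on $W_G$. This is where the transposed indexing $\xi(z,x)$ rather than $\xi(x,z)$ in the definition of $\Phi$ is essential; with the other ordering one gets a right-module compatibility instead.

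Third, I would define the candidate inverse $\Psi(\sigma)(y,x) = \sigma(1_{\{(x,y)\}})(x)$. The only nontrivial well-definedness point is $\Psi(\sigma) \in W_G$: fix $k \geq 0$ and choose $\{\varphi_i\}_{i=1}^n \subset \llangle G \rrangle$ with $G^k = \bigsqcup_{i=1}^n \textup{graph}\,\varphi_i$. For $(y,x) \in \textup{graph}\,\varphi_i$ one has $1_{\{(x,y)\}} = 1_{\{x\}}\alpha_{\varphi_i^{-1}}$ in $R_G$, so by $l^\infty_R(X)$-linearity $\Psi(\sigma)(y,x) = \sigma(\alpha_{\varphi_i^{-1}})(x)$, which takes values in the finite image of $\sigma(\alpha_{\varphi_i^{-1}}) \in l^\infty_R(X)$. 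Taking the union over the finitely many $i$ gives a finite image on $G^k$.

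Finally, $\Phi$ and $\Psi$ are mutually inverse: $\Phi(\Psi(\sigma))(a)(x) = \sum_{z}a(x,z)\sigma(1_{\{(x,z)\}})(x) = \sigma(a)(x)$ is precisely equation \eqref{eqbk2} of Remark \ref{bk1}, and $\Psi(\Phi(\xi))(y,x) = \Phi(\xi)(1_{\{(x,y)\}})(x) = \xi(y,x)$ by direct substitution. There is no single hard step; the main thing to keep straight is the two compatible conventions (left action on hom via $(a'\sigma)(b)=\sigma(ba')$, and the ``transposed'' argument $\xi(z,x)$ in $\Phi$), after which well-definedness reduces to the defining properties of $W_G$ and the uniformly bounded degrees assumption, and surjectivity is simply Remark \ref{bk1}.
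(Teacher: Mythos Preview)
Your proof is correct and follows essentially the same approach as the paper: you define the same maps $\Phi$ and $\Psi$, verify $R_G$-equivariance of $\Phi$ via associativity of the bimodule product, and check the mutual inverse identities using Remark~\ref{bk1}. You are in fact slightly more careful than the paper in verifying well-definedness (that $\Phi(\xi)(a)\in l^\infty_R(X)$ and $\Psi(\sigma)\in W_G$), which the paper leaves implicit.
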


\begin{proof}
Define the map $\Phi:W_G\rightarrow \textup{Hom}_{l^\infty_R(X)}(R_G,l^\infty_R(X)),\ \xi\mapsto \Phi_\xi$ by
\begin{equation*}
    \Phi_\xi(a)(x)=\sum_{y\in[x]_G}a(x,y)\xi(y,x)
\end{equation*}
for $\xi\in W_G$, $a\in R_G$ and $x\in X$. This is a homomorphism of left $R_G$-modules since for all $\xi\in W_G$, $a,b\in R_G$ and $x\in X$, we have
\begin{align*}
    \Phi_{a\xi}(b)(x)&=\sum_{y\in[x]_G} b(x,y)\cdot(a\xi)(y,x)\\
    &=\sum_{y,z\in[x]_G} b(x,y)a(y,z)\xi(z,x)\\
    &=\sum_{z\in[x]_G} (ba)(x,z)\cdot \xi(z,x)\\
    &=\Phi_\xi(ba)(x)\\
    &=(a\Phi_\xi(b))(x).
\end{align*}
\par
The inverse $\Psi:\textup{Hom}_{l^\infty_R(X)}(R_G,l^\infty_R(X)) \to W_G,\ \sigma\mapsto \Psi_\sigma$ of $\Phi$ is given by
\begin{equation*}
    \Psi_\sigma (y,x)=\sigma(1_{\{(x,y)\}})(x)
\end{equation*}
for $\sigma\in \textup{Hom}_{l^\infty_R(X)}(R_G,l^\infty_R(X))$ and $(x,y)\in E_G$. Indeed,
\begin{align*}
    \Psi_{\Phi_\xi}(y,x)=\Phi_\xi(1_{\{(x,y)\}})(x)
    =\sum_{z\in[x]_G}1_{\{(x,y)\}}(x,z)\xi(z,x)
    =\xi(y,x),
\end{align*}
and
\begin{align*}
    \Phi_{\Psi_\sigma}(a)(x)=\sum_{y\in[x]_G}a(x,y)\Psi_\sigma(y,x)
    =\sum_{y\in[x]_G}a(x,y) \sigma(1_{\{(x,y)\}})(x)
    =\sigma(a)(x),
\end{align*}
where the last equation follows from equation \eqref{eqbk2}.
\end{proof}

\begin{lem}\label{bk2}
We have $\textup{H}^n(G,W_G)=0$ for every $n\geq 1$.
\end{lem}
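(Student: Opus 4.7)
The plan is to recognize $W_G \simeq \textup{Hom}_{l^\infty_R(X)}(R_G, l^\infty_R(X))$ as a coinduced module from $l^\infty_R(X)$ up to $R_G$, and then to prove the vanishing by an Eckmann--Shapiro style adjunction together with the fact that $R_G$ is projective over $l^\infty_R(X)$ (Remark \ref{bk12}).

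First I would establish the adjunction: for any left $R_G$-module $M$ and any left $l^\infty_R(X)$-module $N$, the map
\begin{equation*}
\textup{Hom}_{R_G}\mleft(M,\textup{Hom}_{l^\infty_R(X)}(R_G,N)\mright)\to \textup{Hom}_{l^\infty_R(X)}(M,N),\quad \varphi\mapsto (m\mapsto \varphi(m)(1_X))
\end{equation*}
is an isomorphism, with inverse sending $\psi$ to the map $m\mapsto (b\mapsto \psi(bm))$. Here $M$ is viewed as an $l^\infty_R(X)$-module via restriction of scalars, and the $R_G$-module structure on $\textup{Hom}_{l^\infty_R(X)}(R_G, N)$ is the one defined in the excerpt, namely $(a\sigma)(b)=\sigma(ba)$. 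A direct computation verifies that both maps are well-defined and mutually inverse. Moreover, the isomorphism is natural in $M$.

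Next, take any $R_G$-projective resolution $P_\ast\to l^\infty_R(X)\to 0$. By Remark \ref{bk12}, $R_G$ is projective as a left $l^\infty_R(X)$-module; since every projective $R_G$-module is a direct summand of a free $R_G$-module, and a free $R_G$-module is a direct sum of copies of $R_G$, each $P_n$ is projective as a left $l^\infty_R(X)$-module. Therefore $P_\ast\to l^\infty_R(X)\to 0$ is simultaneously an $l^\infty_R(X)$-projective resolution of $l^\infty_R(X)$.

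Finally, applying the adjunction above to $N=l^\infty_R(X)$ yields an isomorphism of cochain complexes
\begin{equation*}
\textup{Hom}_{R_G}(P_\ast,W_G)\simeq \textup{Hom}_{l^\infty_R(X)}(P_\ast,l^\infty_R(X)).
\end{equation*}
The cohomology of the right-hand side computes $\textup{Ext}^n_{l^\infty_R(X)}(l^\infty_R(X),l^\infty_R(X))$, which vanishes for $n\geq 1$ because $l^\infty_R(X)$ is projective (indeed free of rank one) over itself. Hence $\textup{H}^n(G,W_G)=0$ for all $n\geq 1$.

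The only step requiring care is verifying that the adjunction is actually $R_G$-linear on the left and $l^\infty_R(X)$-linear on the right; this is a routine but somewhat notational check using the definition $(a\sigma)(b)=\sigma(ba)$. Beyond that, the proof is a direct application of the standard "Shapiro's lemma" paradigm, with $l^\infty_R(X)\subset R_G$ playing the role analogous to a subgroup inclusion.
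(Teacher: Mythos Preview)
Your proof is correct and follows essentially the same approach as the paper: identify $W_G$ with the coinduced module $\textup{Hom}_{l^\infty_R(X)}(R_G,l^\infty_R(X))$, apply the Hom--coinduction adjunction to pass from $\textup{Hom}_{R_G}(P_*,W_G)$ to $\textup{Hom}_{l^\infty_R(X)}(P_*,l^\infty_R(X))$, and use Remark~\ref{bk12} to see that the chosen $R_G$-projective resolution is also an $l^\infty_R(X)$-projective resolution, whence the cohomology computes $\textup{Ext}^n_{l^\infty_R(X)}(l^\infty_R(X),l^\infty_R(X))=0$ for $n\geq 1$. The only cosmetic difference is that the paper phrases the adjunction via the tensor--Hom identity $\textup{Hom}_{R_G}(P_i,\textup{Hom}_{l^\infty_R(X)}(R_G,l^\infty_R(X)))\simeq \textup{Hom}_{l^\infty_R(X)}(R_G\otimes_{R_G}P_i,l^\infty_R(X))$, whereas you state the restriction--coinduction adjunction directly; these are the same isomorphism.
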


\begin{proof}
Take an $R_G$-projective resolution
\begin{equation}
    \cdots\to P_{n}\to P_{n-1} \to \cdots \to P_0\to l^\infty_R(X)\to 0. \label{eqbk3}
\end{equation}
Then we have
\begin{equation*}
    \textup{H}^n(G, W_G)
    =\textup{H}^n(\textup{Hom}_{R_G}(P_*,W_G)).
\end{equation*}
By the previous lemma, we have
\begin{align*}
    \textup{Hom}_{R_G}(P_i,W_G)
    &=\textup{Hom}_{R_G}\mleft(P_i,\textup{Hom}_{l^\infty_R(X)}(R_G,l^\infty_R(X))\mright)\\
    &=\textup{Hom}_{l^\infty_R(X)}(R_G\otimes_{R_G}P_i,l^\infty_R(X))\\
    &=\textup{Hom}_{l^\infty_R(X)}(P_i,l^\infty_R(X)).
\end{align*}
Here, the second identification is given by 
\begin{align*}
    &\sigma\in\textup{Hom}_{R_G}\mleft(P_i,\textup{Hom}_{l^\infty_R(X)}(R_G, l^\infty_R(X))\mright)\\
    &\mapsto (a\otimes p\mapsto\sigma(p)(a)) \in\textup{Hom}_{l^\infty_R(X)}(R_G\otimes_{R_G}P_i,l^\infty_R(X)).
\end{align*}
Since the sequence \eqref{eqbk3} is also an $l^\infty_R(X)$-projective resolution (see Remark \ref{bk12}), we have 
\begin{equation*}
    \textup{H}^n(G,W_G)=\textup{H}^n\mleft(\textup{Hom}_{l^\infty_R(X)}(P_*,l^\infty_R(X))\mright)=\textup{Ext}_{l^\infty_R(X)}^n(l^\infty_R(X),l^\infty_R(X))=0
\end{equation*}
for every $n\geq 1$.
\end{proof}

Now we define the right $R_G$-module $Z_G$ as follows:
For $k\in\mathbb{Z}_{\geq 0}$, set
\begin{equation*}
    Z_G^k=\left\{ \xi\in l^\infty_R(E_G)\middle|
    \begin{array}{l}
        \forall x\in X\;\forall y,z\in [x]_G\setminus B_G(k;x),\\
        (y,z)\in G\Rightarrow \xi(y,x)=\xi(z,x).
    \end{array}
    \right\},
\end{equation*}
In other words, the set $Z_G^k$ consists of $\xi\in l^\infty_R(E_G)$ such that the function $\xi(\cdot,x)$ is constant on each $G$-connected component of $[x]_G\setminus B_G(k;x)$. Set $Z_G=\bigcup_{k=0}^\infty Z_G^k$. For $\xi\in Z_G$ and $a\in R_G$, define
\begin{equation*}
    (\xi a)(y,x)=\sum_{z\in [x]_G} \xi(y,z)a(z,x).
\end{equation*}
This function belongs to $Z_G$. Indeed, if $\xi\in Z_G^k$ and $\varphi\in\llangle G\rrangle$ with $\textup{graph}\varphi\subset G^l$, then we have $(\xi\alpha_\varphi)(y,x)=\xi(y,\varphi x)1_{\textup{dom}\varphi}(x)$, and thus $\xi\alpha_\varphi\in Z_G^{k+l}$.

Note that $R_G$ is a submodule of $Z_G$. An embedding $\iota:l^\infty_R(X)\rightarrow Z_G$ is defined as follows: for $f\in l^\infty_R(X)$ and $(y,x)\in E_G$,
\begin{equation*}
    \iota(f)(y,x)=f(x).
\end{equation*}
Then the image $\iota(l^\infty_R(X))$ is a right $R_G$-submodule of $Z_G$.

\begin{lem}\label{bk3}
We have natural isomorphisms $p:\textup{H}^0(G,W_G)\rightarrow \iota(l^\infty_R(X))$ and $q: \textup{H}^0(G,W_G/R_G)\rightarrow Z_G/R_G$ of right $R_G$-modules. Moreover, the diagram
\begin{equation*}
    \begin{CD}
     \textup{H}^0(G,W_G) @>{\pi}>> \textup{H}^0(G,W_G/R_G) \\
  @V{p}VV                  @V{q}VV \\
     \iota(l^\infty_R(X))  @>{\rho}>> Z_G/R_G
  \end{CD}
\end{equation*}
is commutative, where $\pi$ is the map induced by the quotient $W_G\rightarrow W_G/R_G$ and $\rho$ is the composition of the inclusion $\iota(l^\infty_R(X))\rightarrow Z_G$ and the quotient $Z_G\rightarrow Z_G/R_G$.
\end{lem}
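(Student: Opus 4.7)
The plan is to use Corollary \ref{bk15} to identify $\textup{H}^0(G, M)$, for any left $R_G$-module $M$, with the set $\{\xi \in M \mid (\alpha_{\varphi_i} - 1_{\textup{im}\varphi_i})\xi = 0\text{ for all }i\}$, once a decomposition $G = \bigsqcup_{i=1}^n (\textup{graph}\varphi_i \sqcup \textup{graph}\varphi_i^{-1})$ is fixed. A direct computation with the left $R_G$-action on $W_G$ gives, for $\xi \in W_G$,
\begin{equation*}
\bigl((\alpha_{\varphi_i} - 1_{\textup{im}\varphi_i})\xi\bigr)(y, x) = 1_{\textup{im}\varphi_i}(y)\bigl(\xi(\varphi_i^{-1}y, x) - \xi(y, x)\bigr),
\end{equation*}
so in both cases the $\textup{H}^0$ condition controls how $\xi(\cdot, x)$ varies along $G$-edges.

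For $p$, vanishing of the above expression for every $i$ means $\xi(w, x) = \xi(z, x)$ for every edge $(w, z) \in G$, so $\xi(\cdot, x)$ is constant on the $G$-connected class $[x]_G$. Setting $f(x) = \xi(x, x)$ then yields $\xi = \iota(f)$, and $f \in l^\infty_R(X)$ is forced by $\xi|_{G^0} \in l^\infty_R(G^0)$. Conversely every $\iota(f)$ lies in the kernel, so $p$ is literally the inclusion $\textup{H}^0(G, W_G) \hookrightarrow \iota(l^\infty_R(X))$, giving a right $R_G$-module isomorphism since $\iota(l^\infty_R(X))$ is a right $R_G$-submodule of $W_G$. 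For $q$, requiring the expression above to lie in $R_G$ for each $i$ becomes, after enlarging $k$ by the maximum $G$-displacement of the $\varphi_i^{\pm 1}$: there exists $k$ such that $\xi(w, x) = \xi(z, x)$ whenever $(w, z) \in G$ with $w, z \in [x]_G \setminus B_G(k; x)$. This is precisely the combinatorial defining condition of $Z_G^k$, modulo verifying that such a $\xi \in W_G$ automatically belongs to $l^\infty_R(E_G)$.

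This last verification is the one real technical obstacle. The claim is that any $G$-connected component $C$ of $[x]_G \setminus B_G(k; x)$ contains a vertex at distance exactly $k + 1$ from $x$: walking a shortest $G$-path from any $y \in C$ toward $x$, consecutive distances from $x$ change by $\pm 1$, so the predecessor of the first vertex of the path entering $B_G(k; x)$ lies in $C$ at distance $k + 1$. Hence the constant value of $\xi$ on $C$ equals $\xi(y', x)$ for some $(y', x) \in G^{k+1}$, and thus lies in the finite set $\xi(G^{k+1})$. Combined with $\xi|_{G^k}$ having finite image this gives $\xi \in l^\infty_R(E_G)$, hence $\xi \in Z_G^k$. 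Since $R_G \subset Z_G$, the natural map $Z_G/R_G \to W_G/R_G$ is injective, and the above identifies its image with $\textup{H}^0(G, W_G/R_G)$, supplying $q$ as a right $R_G$-module isomorphism. Commutativity of the diagram is then automatic, since each of $p, q, \pi, \rho$ acts as the identity on representatives, so both $q \circ \pi$ and $\rho \circ p$ send $\xi$ to $[\xi] \in Z_G/R_G$.
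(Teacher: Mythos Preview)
Your proof is correct and follows the same approach as the paper: both use Corollary~\ref{bk15} to identify $\textup{H}^0$ with the kernel of the operators $\alpha_{\varphi_i}-1_{\textup{im}\varphi_i}$, compute that $((\alpha_{\varphi_i}-1_{\textup{im}\varphi_i})\xi)(y,x)=1_{\textup{im}\varphi_i}(y)(\xi(\varphi_i^{-1}y,x)-\xi(y,x))$, and read off the descriptions of $p$ and $q$ from this. The paper's proof simply asserts that the inclusion $Z_G/R_G\hookrightarrow W_G/R_G$ is onto $\textup{H}^0(G,W_G/R_G)$ once the combinatorial condition is established; you are more careful in that you explicitly verify the remaining membership $\xi\in l^\infty_R(E_G)$ via the observation that every $G$-component of $[x]_G\setminus B_G(k;x)$ meets the sphere $\{y:d_G(y,x)=k+1\}$, so all values of $\xi$ already appear in the finite set $\xi(G^{k+1})$. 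This step is implicit in the paper but genuinely needed, so your write-up is in fact slightly more complete.
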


\begin{proof}
Take $\{\varphi_i\}_{i=1}^n\subset\llangle G\rrangle$ so that $G=\bigsqcup_{i=1}^n\mleft(\textup{graph}\varphi_i\sqcup \textup{graph}\varphi_i^{-1}\mright)$. Then by Corollary \ref{bk15}, $\textup{H}^0(G,W_G)$ is identified with the right $R_G$-submodule
\begin{align*}
    &\{\xi\in W_G\mid (\alpha_{\varphi_i}-1_{\textup{im}\varphi_i}) \xi=0\ \forall i=1,...,n\}\\
    =&\{\xi\in W_G\mid \forall x\in X\:\forall y,z\in [x]_G,\  \xi(y,x)=\xi(z,x)\}\\
    =&\iota(l^\infty_R(X)).
\end{align*}
Here the first equation follows from 
\begin{equation*}
    ((\alpha_{\varphi_i}-1_{\textup{im}\varphi_i}) \xi) (y,x)=
    1_{\textup{im}\varphi_i}(y)(\xi(\varphi_i^{-1}y,x)-\xi(y,x)).
\end{equation*}
This identification is the desired isomorphism $p$. Also, $\textup{H}^0(G,W_G/R_G)$ is identified with the right $R_G$-submodule
\begin{equation*}
    \{[\xi]\in W_G/R_G\mid (\alpha_{\varphi_i}-1_{\textup{im}\varphi_i}) [\xi]=0\ \forall i=1,...,n\}.
\end{equation*}
For $\xi\in W_G$, we have $(\alpha_{\varphi_i}-1_{\textup{im}\varphi_i}) \xi \in R_G$ if and only if there exists $k\geq 0$ such that the function
\begin{equation*}
    y\in [x]_G\mapsto 1_{\textup{im}\varphi_i}(y)(\xi(\varphi_i^{-1}y,x)-\xi(y,x))
\end{equation*}
is supported on $B_G(k;x)$ for all $x\in X$. It follows that $(\alpha_{\varphi_i}-1_{\textup{im}\varphi_i}) \xi \in R_G$ for all $1\leq i\leq n$ if and only if there exists $k\geq 0$ such that
\begin{equation*}
    \forall y,z\in [x]_G\setminus B_G(k;x),\ ((y,z)\in G\Rightarrow \xi(y,x)=\xi(z,x))
\end{equation*}
for all $x\in X$. Hence, the inclusion map $Z_G/R_G\rightarrow W_G/R_G$ is onto $\textup{H}^0(G,W_G/R_G)$. The inverse of this map is the desired isomorphism $q$. Then the map $\pi$ coincides with $\xi\in \iota(l^\infty_R(X))\mapsto [\xi]\in Z_G/R_G$. The lemma is proved.
\end{proof}

\begin{prop}\label{bk10}
The right $R_G$-module $\textup{H}^1(G,R_G)$ is isomorphic to
\begin{equation*}
    Z_G/(R_G+\iota(l^\infty_R(X))).
\end{equation*}
In particular, $\textup{H}^1(G,R_G)$ is finitely generated as a right $R_G$-module if and only if so is $Z_G$.
\end{prop}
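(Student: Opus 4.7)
The plan is to apply the long exact sequence of $\textup{Ext}_{R_G}^*(l^\infty_R(X),-)$ to the short exact sequence of $R_G$-bimodules
\begin{equation*}
    0 \to R_G \to W_G \to W_G/R_G \to 0.
\end{equation*}
Since $\textup{H}^1(G,W_G)=0$ by Lemma \ref{bk2}, the tail of the long exact sequence reduces to
\begin{equation*}
    \textup{H}^0(G,W_G)\xrightarrow{\pi}\textup{H}^0(G,W_G/R_G)\to \textup{H}^1(G,R_G)\to 0,
\end{equation*}
where $\pi$ is induced by the quotient $W_G\to W_G/R_G$ and the whole sequence is one of right $R_G$-modules (thanks to the bimodule structure). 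Thus $\textup{H}^1(G,R_G)\simeq \textup{coker}(\pi)$ as right $R_G$-modules.

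Next, I invoke Lemma \ref{bk3}, whose commutative square identifies $\pi$ (via the isomorphisms $p$ and $q$) with the map $\rho : \iota(l^\infty_R(X))\hookrightarrow Z_G\twoheadrightarrow Z_G/R_G$. The image of $\rho$ is $(R_G+\iota(l^\infty_R(X)))/R_G$, so the third isomorphism theorem gives
\begin{equation*}
    \textup{H}^1(G,R_G)\simeq \frac{Z_G/R_G}{(R_G+\iota(l^\infty_R(X)))/R_G}\simeq Z_G/(R_G+\iota(l^\infty_R(X))),
\end{equation*}
which is the first claim.

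For the ``in particular'' part, I would observe that the right $R_G$-submodule $R_G+\iota(l^\infty_R(X))$ is generated by the two elements $1_X$ and $\iota(1_X)$: indeed $R_G=1_X\cdot R_G$, and a direct computation from the definition of the right action shows that $\iota(1_X)\cdot f=\iota(f)$ for every $f\in l^\infty_R(X)\subset R_G$. Therefore $R_G+\iota(l^\infty_R(X))$ is finitely generated, and $Z_G$ is finitely generated as a right $R_G$-module if and only if the quotient $Z_G/(R_G+\iota(l^\infty_R(X)))\simeq \textup{H}^1(G,R_G)$ is.

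The only point that needs some care is the compatibility of the right $R_G$-module structure on $\textup{Ext}$ with the connecting maps of the long exact sequence; this is an immediate consequence of the functoriality of $\textup{Ext}_{R_G}^*(l^\infty_R(X),-)$ on the category of $R_G$-bimodules, so I do not anticipate any significant obstacle.
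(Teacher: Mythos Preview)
Your proposal is correct and follows essentially the same approach as the paper: both apply the long exact sequence to $0\to R_G\to W_G\to W_G/R_G\to 0$, use Lemma~\ref{bk2} to kill $\textup{H}^1(G,W_G)$, and then invoke Lemma~\ref{bk3} to identify $\textup{coker}(\pi)$ with $Z_G/(R_G+\iota(l^\infty_R(X)))$. Your treatment of the ``in particular'' clause is slightly more explicit than the paper's (you verify $\iota(1_X)\cdot f=\iota(f)$ directly), but the argument is the same.
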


\begin{proof}
The short exact sequence $0\rightarrow R_G\rightarrow W_G\rightarrow W_G/R_G\rightarrow 0$ induces the long exact sequence in cohomology:
\begin{align*}
    0&\rightarrow \textup{H}^0(G,R_G)\rightarrow \textup{H}^0(G,W_G)\xrightarrow{\pi} \textup{H}^0(G,W_G/R_G)\\
    &\rightarrow \textup{H}^1(G,R_G)\rightarrow \textup{H}^1(G,W_G)\rightarrow \textup{H}^1(G,W_G/R_G)\\
    &\rightarrow \cdots.
\end{align*}
Since $\textup{H}^1(G,W_G)=0$ holds by Lemma \ref{bk2}, we have
\begin{equation*}
    \textup{H}^1(G,R_G)\simeq \textup{H}^0(G,W_G/R_G)/\textup{im}\pi.
\end{equation*}
Then Lemma \ref{bk3} implies that it is isomorphic to $Z_G/(R_G+\iota(l^\infty_R(X)))$.

The latter assertion in the proposition follows from that $R_G$ is generated by $\{1_X\}$ and $\iota(l^\infty_R(X))$ is generated by $\{\iota(1_X)\}$ as right $R_G$-modules.
\end{proof}

\begin{cor}\label{bk13}
The right $R_G$-module $\textup{H}^1(G,R_G)$ is finitely generated if and only if the right $R_G$-module $Z_G$ is generated by $Z_G^k$ for some $k\in\mathbb{Z}_{\geq 0}$.
\end{cor}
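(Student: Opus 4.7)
The plan is to invoke Proposition~\ref{bk10}, which identifies finite generation of $\textup{H}^1(G,R_G)$ with finite generation of $Z_G$ as a right $R_G$-module. It therefore suffices to prove the equivalence: \emph{$Z_G$ is finitely generated as a right $R_G$-module if and only if $Z_G$ is generated by $Z_G^k$ for some $k$.}

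The forward direction is immediate: if $\xi_1,\dots,\xi_m$ generate $Z_G$, each $\xi_i$ lies in some $Z_G^{k_i}$, so setting $k=\max_i k_i$ puts them all into $Z_G^k$. Here one uses the monotonicity $Z_G^{k'}\subseteq Z_G^k$ for $k'\leq k$, which holds because each $G$-connected component of $[x]_G\setminus B_G(k;x)$ is contained in some $G$-connected component of $[x]_G\setminus B_G(k';x)$, so constancy on the coarser partition forces constancy on the finer one.

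For the reverse direction, the crux is to show that $Z_G^k$ itself is finitely generated as a right $R_G$-module; once that is established, any generators of $Z_G^k$ serve as generators of $Z_G$. I would prove the stronger statement that $Z_G^k$ is already finitely generated as a right $l^\infty_R(X)$-module, which suffices since $l^\infty_R(X)\subseteq R_G$. The idea is to classify the points of $X$ by a finite list of \emph{local types}: to each $x\in X$ attach the isomorphism class of $\bigl(B_G(k+1;x),\,x,\,\pi_x\bigr)$, where $\pi_x$ is the partition of $B_G(k+1;x)\setminus B_G(k;x)$ whose classes record membership in the components of $[x]_G\setminus B_G(k;x)$. The uniform bound on degrees bounds $|B_G(k+1;x)|$, so there are only finitely many such types. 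For each type $T$, fix a representative together with an enumeration of its ball vertices and partition classes, and for each $x$ choose (arbitrarily) an isomorphism $\phi_x$ from the representative of $x$'s type onto $\bigl(B_G(k+1;x),x,\pi_x\bigr)$. Then for each type $T$ and each label $\ell$ of $T$ (either a ball vertex or a partition class) define $\eta_{T,\ell}\in Z_G^k$ to be the $\{0,1\}$-valued function equal to $1$ at $(y,x)$ exactly when $x$ has type $T$ and $y$ occupies the location specified by $\ell$ under $\phi_x$ (the single ball vertex in question, or the entire component of $[x]_G\setminus B_G(k;x)$ pointed to by $\ell$). This collection is finite, and for any $\xi\in Z_G^k$ one has the pointwise decomposition
\[
\xi \;=\; \sum_{T,\ell}\, \eta_{T,\ell}\cdot f_{T,\ell},
\]
with $f_{T,\ell}(x)=\xi(\phi_x(\ell),x)$ for $x$ of type $T$ and $f_{T,\ell}(x)=0$ otherwise; each $f_{T,\ell}$ lies in $l^\infty_R(X)$ because $\xi$ is finitely valued.

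The one substantive observation driving the argument is that the combined constraints ``$\xi$ is finitely valued'' and ``$\xi(\cdot,x)$ is constant on components of $[x]_G\setminus B_G(k;x)$'' reduce the data of $\xi$ near each $x$ to a bounded tuple chosen from a finite universal menu indexed by the local type of $x$; everything else is bookkeeping. The non-canonicity of $\phi_x$ (automorphisms of the type representative) is harmless, since any choice produces valid generators.
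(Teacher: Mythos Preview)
Your approach is sound and parallels the paper's, with one small slip: if ``ball vertex'' ranges over all of $B_G(k+1;x)$, then for a label $\ell$ in the shell $B_G(k+1;x)\setminus B_G(k;x)$ the indicator $\eta_{T,\ell}(\cdot,x)=1_{\{\phi_x(\ell)\}}$ fails to lie in $Z_G^k$ (it is nonconstant on the component of $[x]_G\setminus B_G(k;x)$ containing $\phi_x(\ell)$), and such a point is simultaneously covered by a partition-class label, so your displayed sum double-counts on that shell. Restricting the ball-vertex labels to $B_G(k;x)$ repairs both issues at once.

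The paper's proof is more direct and avoids the type apparatus entirely. It enumerates the components of $[x]_G\setminus B_G(k;x)$ as $C_1(x),\dots,C_n(x)$ with $n$ uniform in $x$ (padding by $\varnothing$), sets $\xi_i(\cdot,x)=1_{C_i(x)}\in Z_G^k$, and observes that any $\xi\in Z_G^k$ satisfies $\xi-\sum_i\xi_if_i\in R_G^k\subset 1_X\cdot R_G$ for suitable $f_i\in l^\infty_R(X)$, so that $Z_G^k$ lies in the $R_G$-submodule of $Z_G$ generated by $\{\xi_i\}_{i=1}^n\cup\{1_X\}$. Your ball-vertex generators are unnecessary because the full $R_G$-action lets the single element $1_X$ absorb the part of $\xi$ supported on $B_G(k;x)$; by insisting on working over $l^\infty_R(X)$ you are forced to enumerate those positions explicitly. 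What you gain is the slightly stronger conclusion that $Z_G^k$ is already finitely generated as an $l^\infty_R(X)$-module, but the corollary only needs containment in a finitely generated $R_G$-submodule.
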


\begin{proof}
It suffices to show that for every $k\in\mathbb{Z}_{\geq 0}$, the set $Z_G^k$ is contained in a finitely generated submodule of $Z_G$.
Fix $k\in\mathbb{Z}_{\geq 0}$. For $x\in X$, let $\{C_i(x)\}_{i=1}^n$ be a family of $G$-connected components of $[x]_G\setminus B_G(k;x)$ such that $[x]_G\setminus B_G(k;x)=\bigsqcup_{i=1}^nC_i(x)$. By allowing $C_i(x)=\varnothing$, we may assume that the number $n$ does not depend on $x$. Now set $\xi_i(\cdot,x)=1_{C_i(x)}$ for $x\in X$ and $1\leq i\leq n$. Then for every $\xi\in Z_G^k$, there exists $\{f_i\}_{i=1}^n\subset l^\infty_R(X)$ such that $\xi-\sum_{i=1}^n\xi_if_i\in R_G^k$. Hence $Z_G^k$ is contained in the submodule of $Z_G$ genetated by $\{\xi_i\}_{i=1}^n\cup \{1_X\}$.
\end{proof}

\subsection{Cutsets and \texorpdfstring{\rm$\textup{H}^1(G,R_G)$}{H1(G,RG)}} \label{sbk4}

Now we discuss properties of $\textup{H}^1(G,R_G)$ in terms of cutsets of $G$. 

\begin{dfn}
Let $\mathcal{C}$ be a cutset of $G$. A function $\xi\in Z_G$ is \textit{represented} by $\mathcal{C}$ if there exist $\{\xi_i\}_{i=1}^n\subset Z_G$ satisfying $\xi_i(\cdot,x)=1_{C_i(x)}$ with $C_i(x)\in\mathcal{C}\cup\{\varnothing\}$, and $\{f_i\}_{i=1}^n\subset l^\infty_R(X)$ such that $\xi-\sum_{i=1}^n\xi_if_i\in \iota(l^\infty_R(X))+R_G$.

We say that $\textup{H}^1(G,R_G)$ is \textit{generated} by $\mathcal{C}$ if every element of $Z_G$ is represented by $\mathcal{C}$.
\end{dfn}

Recall the definition of uniformly at most one-ended graphs (Definition \ref{intr11}).

\begin{prop}\label{intr10}
The following assertion holds:
\begin{enumerate}
    \item The right $R_G$-module $\textup{H}^1(G,R_G)$ is finitely generated if and only if it is generated by a cutset of $G$ with uniformly bounded boundaries.
    \item We have $\textup{H}^1(G,R_G)=0$ if and only if $G$ is uniformly at most one-ended.
\end{enumerate}
\end{prop}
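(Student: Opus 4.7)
The plan is to use Proposition \ref{bk10} to identify $\textup{H}^1(G,R_G)$ with $Z_G/(R_G+\iota(l^\infty_R(X)))$ and then combine this with Corollary \ref{bk13} on finite generation. I would handle (ii) first as a warm-up for the module manipulations needed in (i).

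For (ii) $\Leftarrow$, I take $\xi\in Z_G^k$ and observe that each $G$-connected component $K$ of $[x]_G\setminus B_G(k;x)$ is a cut with $\partial_\textup{iv}^G K\subset B_G(k+1;x)$ (hence of diameter at most $2(k+1)$), so uniform one-endedness produces $r=r(2(k+1))$ with $\textup{diam}_G(K)\leq r$ or $\textup{diam}_G(\overline{K})\leq r$. Two components $K_1,K_2$ both with "small complement" would give $K_2\subset\overline{K_1}$ and thus $\textup{diam}_G(K_2)\leq r$, a contradiction, so at most one $K_0(x)$ has this property and all remaining components lie in $B_G(k+1+r;x)$. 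Setting $g(x)$ equal to the value of $\xi$ on $K_0(x)$ (or $0$ when none exists) gives $g\in l^\infty_R(X)$, and $\xi-\iota(g)$ has fiber support in $B_G(k+1+r;x)$, so it lies in $R_G$. For (ii) $\Rightarrow$, I argue the contrapositive: if $G$ is not uniformly one-ended, fix $k$ witnessing this and pick cuts $C_n$ with $\textup{diam}_G(\partial_\textup{iv}^G C_n)\leq k$ but $\textup{diam}_G(C_n),\textup{diam}_G(\overline{C_n})\to\infty$, choose distinct $x_n\in\partial_\textup{iv}^G C_n$, and define $\xi(y,x):=1_{C_n}(y)$ if $x=x_n$ and $0$ otherwise. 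This $\xi$ lies in $Z_G^{k+1}$, but an equality $\xi=\rho+\iota(g)$ with $\rho\in R_G^m$ would force $g(x_n)\in\{0,1\}$ and hence either $C_n$ or $\overline{C_n}$ to be contained in $B_G(m;x_n)$ for all large $n$, contradicting the unbounded diameters.

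For (i) $\Rightarrow$, Corollary \ref{bk13} provides $k$ with $Z_G=\langle Z_G^k\rangle_{R_G}$, and its proof furnishes cut-indicator functions $\xi_1,\ldots,\xi_n\in Z_G^k$ whose cuts $C_i(x)$ are the $G$-connected components of $[x]_G\setminus B_G(k;x)$ (hence $\partial_\textup{iv}^G$-diameter $\leq 2(k+1)$). Let $\mathcal{C}$ be the resulting cutset. For any $\xi\in Z_G$ write $\xi=\sum_i\xi_i a_i+\rho+\iota(g)$ with $a_i\in R_G$, decompose each $a_i=\sum_j\alpha_{\varphi_j}f_{ij}$ using Remark \ref{bk16}, and observe that $(\xi_i\alpha_{\varphi_j})(y,x)=1_{C_i(\varphi_j x)}(y)\cdot 1_{\textup{dom}\,\varphi_j}(x)$ is itself a cut-indicator from $\mathcal{C}$ (with shifted selector $x\mapsto C_i(\varphi_j x)$). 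Rearranging gives $\xi\equiv\sum_{i,j}(\xi_i\alpha_{\varphi_j})f_{ij}\pmod{R_G+\iota(l^\infty_R(X))}$, which is the required representation by $\mathcal{C}$.

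The main obstacle is (i) $\Leftarrow$: a generic cut-indicator for a cut in $\mathcal{C}$ with bounded-diameter boundary need \emph{not} lie in $Z_G^{r+1}$, since its boundary can be arbitrarily far from $x$. To extract finitely many generators, I would first derive, via the counting argument of Lemma \ref{lot4}, that $N:=\sup_x|\{C\in\mathcal{C}:x\in\partial_\textup{iv}^G C\}|$ is finite (using that such a cut is determined by its edge boundary, which lies in the finite set $(B_G(r+1;x)\times B_G(r;x))\cap G$). Fixing an enumeration $D_1(x),\ldots,D_N(x)$ of $\{C\in\mathcal{C}:x\in\partial_\textup{iv}^G C\}$ (padded with $\varnothing$), I set $\eta_i(y,x):=1_{D_i(x)}(y)$; since $x\in\partial_\textup{iv}^G D_i(x)$ forces $\partial D_i(x)\subset B_G(r+1;x)$, each $\eta_i$ lies in $Z_G^{r+1}$. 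Given a representation $\xi=\sum_j\xi_j f_j+\rho+\iota(g)$ with $\xi_j(\cdot,x)=1_{C_j(x)}$, I pick $c_j(x)\in\partial_\textup{iv}^G C_j(x)$ (at distance $\leq l_j$ from $x$, where $\xi_j\in Z_G^{l_j}$), note $C_j(x)=D_{i(j,x)}(c_j(x))$ for a unique index $i(j,x)\in\{1,\ldots,N\}$, and define $a_{j,i}(z,x):=1_{z=c_j(x)}\cdot 1_{i(j,x)=i}\in R_G$. A direct computation gives $(\eta_i a_{j,i})(y,x)=1_{C_j(x)}(y)\cdot 1_{i(j,x)=i}(x)$, so $\xi_j=\sum_i\eta_i a_{j,i}$, and summing over $j$ yields $\xi\equiv\sum_i\eta_i b_i\pmod{R_G+\iota(l^\infty_R(X))}$ with $b_i=\sum_j a_{j,i}f_j\in R_G$. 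Thus $[\eta_1],\ldots,[\eta_N]$ finitely generate $\textup{H}^1(G,R_G)$ as a right $R_G$-module, and Corollary \ref{bk13} concludes the proof.
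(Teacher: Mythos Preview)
Your argument is correct and follows the paper's strategy: identify $\textup{H}^1(G,R_G)$ with $Z_G/(R_G+\iota(l^\infty_R(X)))$ via Proposition~\ref{bk10} and reduce everything to cut-indicator functions using Corollary~\ref{bk13}. The differences are only tactical. For (ii)$\Rightarrow$ the paper argues directly rather than by your contrapositive: it observes that the canonical indicators $\xi_i(\cdot,x)=1_{C_i(x)}$ must lie in $R_G+\iota(l^\infty_R(X))$, and reads off the uniform diameter dichotomy from that; your construction works too, but note you should justify that the $x_n$ can be taken distinct (pass to a subsequence, using that each vertex lies in only finitely many boundaries of bounded diameter, as in Lemma~\ref{lot4}). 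For (i)$\Leftarrow$ the paper replaces your enumeration $\{\eta_i\}_{i=1}^N$ by a single translation: given $\xi(\cdot,x)=1_{C(x)}$, it chooses $\varphi\in\llangle G\rrangle$ with $\textup{im}\varphi=\{x:C(x)\neq\varnothing\}$ and $\varphi^{-1}x\in\partial_\textup{ov}^G C(x)$, so that $\xi\alpha_\varphi\in Z_G^k$ and $\xi=(\xi\alpha_\varphi)\alpha_{\varphi^{-1}}$. This is the same ``translate the basepoint to the boundary'' idea as your $a_{j,i}$, just packaged more compactly; your version has the mild advantage of making the bounded-propagation check on the $a_{j,i}$ explicit.
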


\begin{proof}
(i) Suppose that $\textup{H}^1(G,R_G)$ is finitely generated as a right $R_G$-module. Then by Corollary \ref{bk13}, $Z_G$ is generated by $Z_G^k$ for some $k$. Let $[x]_G\setminus B_G(k;x)=\bigsqcup_{i=1}^n C_i(x)$ as in the proof of Corollary \ref{bk13}. Then every element of $Z_G^k$ is represented by the cutset
\begin{equation*}
    \mathcal{C}=\{C_i(x)\mid 1\leq i\leq n,\ x\in X,\ C_i(x)\neq\varnothing\},
\end{equation*}
which has uniformly bounded boundaries.
Note that if $\xi\in Z_G$ satisfies $\xi(\cdot,x)=1_{C(x)}$, then for $\varphi\in\llangle G\rrangle$, we have $(\xi\alpha_\varphi)(\cdot,x)=1_{C(\varphi x)}1_{\textup{dom}\varphi}(x)$. Hence every element of $Z_G$ is represented by the cutset $\mathcal{C}$.

Conversely, suppose that $\textup{H}^1(G,R_G)$ is generated by a cutset $\mathcal{C}$ of $G$ with 
\begin{equation*}
    \sup_{C\in\mathcal{C}}\textup{diam}_G\mleft(\partial_\textup{ov}^GC\mright)=k<\infty.
\end{equation*}
Let $\xi\in Z_G$ be such that $\xi(\cdot,x)=1_{C(x)}$ with $C(x)\in \mathcal{C}\cup\{\varnothing\}$. It suffices to show that $\xi$ is contained in the submodule of $Z_G$ generated by $Z_G^k$. Take $\varphi\in\llangle G\rrangle$ so that $\textup{im}\varphi=\{x\in X\mid C(x)\neq \varnothing\}$ and $\varphi^{-1}x\in\partial_\textup{ov}^GC(x)$ for every $x\in \textup{im}\varphi$. Then we have $\xi\alpha_{\varphi}\in Z_G^k$ since $(\xi\alpha_{\varphi})(\cdot,x)=1_{C(\varphi x)}$ and $\partial_\textup{ov}^GC(\varphi x)\subset B_G(k;x)$ for $x\in \textup{dom}\varphi$. Now $\xi=\xi\alpha_{\varphi}\alpha_{\varphi^{-1}}$ holds, and thus the claim is proved.

(ii) Suppose that $\textup{H}^1(G,R_G)=0$. For $k\in\mathbb{Z}_{\geq 0}$, let $[x]_G\setminus B_G(k;x)=\bigsqcup_{i=1}^n C_i(x)$ as in the proof of Corollary \ref{bk13}. Define $\xi_i\in Z_G$ by $\xi_i(\cdot,x)=1_{C_i(x)}$. Then we have $\xi_i\in \iota(l^\infty_R(X))+R_G$ for every $i$ by assumption. This implies that there exists $r>0$ such that for all $x\in X$ and $1\leq i\leq n$, either $\textup{diam}_G(C_i(x))\leq r$ or $\textup{diam}_G\mleft(\overline{C_i(x)}\mright)\leq r$ holds. Hence $G$ is uniformly at most one-ended.

Conversely, suppose that $G$ is uniformly at most one-ended. Let $\xi\in Z_G$ be defined by $\xi(\cdot,x)=1_{C(x)}$ with $C(x)\in \textup{Cut}(G)$. Since there exists $r>0$ such that for all $x\in X$, either $\textup{diam}_G(C(x))\leq r$ or $\textup{diam}_G\mleft(\overline{C(x)}\mright)\leq r$ holds, we have $\xi\in\iota(l^\infty_R(X))+R_G$. Hence we have $\textup{H}^1(G,R_G)=0$.
\end{proof}

The next lemma will be needed to show that the above properties are preserved after decomposing a graph. Recall Definitions \ref{intr3} and \ref{intr14}, and Notation \ref{pre5}.

\begin{lem}\label{mos6}
Suppose that $\textup{H}^1(G,R_G)$ is generated by a cutset $\mathcal{C}$ of $G$.
\begin{enumerate}
    \item If $(Y,G')$ is a graph with uniformly bounded degrees and $\lambda:(Y,G')\rightarrow (X,G)$ is a quasi-isometry, then $\textup{H}^1(G',R_{G'})$ is generated by the cutset ${\lambda^{-1}(\mathcal{C})}^*$ of $G'$, where
    \begin{equation*}
        {\lambda^{-1}(\mathcal{C})}=\{\lambda^{-1}(C)\mid C\in\mathcal{C}\}.
    \end{equation*}
    \item If $G=H\ast K$ with $H$ and $K$ subgraphs of $G$, then $\textup{H}^1(H,R_H)$ is generated by the cutset ${\mathcal{C}|_{H}}^*$ of $H$, where
    \begin{equation*}
        {\mathcal{C}|_{H}}=\{C\cap \omega\mid C\in\mathcal{C},\ \omega\in X/E_H\}.
    \end{equation*}
\end{enumerate}
\end{lem}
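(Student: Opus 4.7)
The plan is to treat (i) by a pullback construction exploiting the quasi-isometry, and (ii) by a saturation construction exploiting the free-product structure.

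For (i), I would first reduce to the case of an injective quasi-isometry via the subset construction from the proof of Lemma \ref{bk9}: pass to $Y_0 \subset Y$ on which $\lambda$ is injective and which is still a net in $Y$, so that the hypothesis is preserved (pullback of a generating cutset along a Lipschitz equivalence still generates). After this reduction, choose $\mu : X \to Y$ with $\mu \circ \lambda = \textup{id}_Y$ and $\sup_{x \in X} d_G(\lambda\mu x, x) < \infty$. Define $\mu_* : Z_{G'} \to Z_G$ by $(\mu_* \xi')(x_1,x_2) = \xi'(\mu x_1, \mu x_2)$ and $\lambda^* : Z_G \to Z_{G'}$ by $(\lambda^* \eta)(y_1,y_2) = \eta(\lambda y_1, \lambda y_2)$; these are well-defined by routine quasi-isometric estimates on the ``constant on components far from the base point'' condition, they preserve $R_G$, $R_{G'}$ and the submodules $\iota(l^\infty_R(X))$, $\iota(l^\infty_R(Y))$, and satisfy $\lambda^* \circ \mu_* = \textup{id}_{Z_{G'}}$. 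Given $\xi' \in Z_{G'}$, apply the hypothesis to $\mu_* \xi'$ to write $\mu_* \xi' = \sum_i \eta_i f_i + \iota(g) + a$ with $\eta_i(\cdot, x) = 1_{D_i(x)}$ and $D_i(x) \in \mathcal{C} \cup \{\varnothing\}$; applying $\lambda^*$ term-by-term gives a representation of $\xi'$ whose characteristic-function pieces are $(\lambda^* \eta_i)(\cdot, y) = 1_{\lambda^{-1}(D_i(\lambda y)) \cap [y]_{G'}}$, and these cuts belong to $\lambda^{-1}(\mathcal{C})^* \cup \{\varnothing\}$ after splitting the trivial (empty or full-class) cases into the $\iota$-term.

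For (ii), given $\xi_H \in Z_H$ I may assume by the argument of Proposition \ref{intr10}(i) that $\xi_H(\cdot, x) = 1_{C_H(x)}$ for a family of cuts $C_H(x)$ of $H$ of uniformly bounded boundary diameter. I extend $\xi_H$ to an element $\xi_G \in Z_G$ by saturation: for each $y \in [x]_G$ define the \emph{entry vertex} $\pi(y) \in [x]_H$ to be the unique vertex of $[x]_H$ through which every reduced $E_H$-$E_K$-alternating word representing $y$ must pass upon first leaving $[x]_H$ (with $\pi(y) = y$ for $y \in [x]_H$), and set $S(x) = \pi^{-1}(C_H(x))$. A direct case analysis using the freely intersecting hypothesis (Definition \ref{intr14}) shows that for every edge $(u,v) \in G$ not lying in $\partial_\textup{oe}^H C_H(x)$ one has $\pi(u) = \pi(v)$; hence $\partial_\textup{oe}^G S(x) = \partial_\textup{oe}^H C_H(x)$, so $S(x)$ is a cut of $G$ with the same edge boundary as $C_H(x)$. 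Setting $\xi_G(\cdot, x) = 1_{S(x)}$ then gives $\xi_G \in Z_G$ with $\xi_G|_{E_H} = \xi_H$. Applying the hypothesis yields $\xi_G = \sum_i \eta_i f_i + \iota(g) + a$ with cuts $D_i(x) \in \mathcal{C} \cup \{\varnothing\}$, and restricting to $E_H$ gives $\xi_H = \sum_i (\eta_i|_{E_H}) f_i + \iota(g) + a|_{E_H}$: intersections $D_i(x) \cap [x]_H$ that are neither empty nor the full $E_H$-class are cuts in $\mathcal{C}|_H^*$ (the trivial cases absorb into the $\iota$-term), and $a|_{E_H} \in R_H$ because the freely intersecting condition forces $d_H = d_G|_{E_H}$ (every $G$-excursion into an $E_K$-class must enter and leave that class at the same vertex, hence can be removed without lengthening the path).

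The main obstacle is in (ii), specifically setting up the entry map $\pi$ from the Bass--Serre-like picture purely at the level of sets and verifying both the boundary identity $\partial_\textup{oe}^G S(x) = \partial_\textup{oe}^H C_H(x)$ and the metric identity $d_H = d_G|_{E_H}$ directly from Definition \ref{intr14}. In (i), the key identity $\lambda^* \circ \mu_* = \textup{id}_{Z_{G'}}$ follows automatically from $\mu \circ \lambda = \textup{id}_Y$ after the injective reduction, so the remaining work there is a routine check of well-definedness at the level of the $Z$-modules.
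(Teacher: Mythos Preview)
Your approach to (ii) is essentially identical to the paper's: what you call the entry-vertex saturation $S(x) = \pi^{-1}(C_H(x))$ is exactly the paper's $\widehat{D(x)}$, characterized there simply as the unique cut of $G$ with $\partial_\textup{oe}^G \widehat{D} = \partial_\textup{oe}^H D$ (existence and the identity $\widehat{D}\cap[x]_H=D$ being immediate from $G=H\ast K$). The paper also restricts the resulting representation to $E_H$; your claim that $a|_{E_H} \in R_H$ follows already from $H$ being a coarsely embedded subgraph of $G$ (Remark~\ref{pre1} and Lemma~\ref{lot2}), so the stronger metric identity $d_H = d_G|_{E_H}$, while true, is more than you need.

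For (i), your core pullback argument via $\mu_*$ and $\lambda^*$ matches the paper's composition with $\lambda \times \lambda$, but your reduction to the injective case has a gap. Passing to a net $Y_0 \subset Y$ and proving generation for $(Y_0, G'_0)$ by the injective case leaves you needing to transfer generation \emph{from} $Y_0$ \emph{to} $Y$. The injective case applied to the inclusion $\iota:Y_0 \hookrightarrow Y$ goes the wrong direction (it would deduce generation on $Y_0$ from a hypothesis on $Y$), and the natural retraction $r:Y \to Y_0$ is not injective, so you cannot invoke the injective case there either. Your parenthetical about Lipschitz equivalence does not apply since $Y_0$ and $Y$ have different vertex sets. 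The paper sidesteps this entirely: it partitions $Y = \bigsqcup_{k=1}^m Y_k$ into finitely many pieces on each of which $\lambda$ is injective (possible since $\lambda$ has uniformly bounded fibers), writes $\eta = \sum_k \eta \cdot 1_{Y_k}$, and runs your $\mu_*/\lambda^*$ argument on each summand separately. This is a minor but genuine fix; after it, your argument and the paper's coincide.
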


\begin{proof}
(i)  Take subsets $Y_k\subset Y\ (k=1,...,m)$ so that $Y=\bigsqcup_{k=1}^m Y_k$ and $\lambda|_{Y_k}$ is injective, and take a quasi-isometric inverse $\gamma:(X,G)\rightarrow (Y,G')$ of $\lambda$. Let $\eta\in Z_{G'}$ be defined by $\eta(\cdot,y)=1_{D(y)}$ with $D(y)\in\textup{Cut}(G')\cup\{\varnothing\}$. Since $\eta=\sum_{k=1}^m\eta 1_{Y_k}$, it suffices to show that $\eta 1_{Y_k}$ is represented by ${\lambda^{-1}(\mathcal{C})}^*$ for every $k$. Hence we fix $1\leq k\leq m$, and we may assume $\eta$ is supported on $Y\times Y_k$. We define $\xi\in Z_G$ supported on $X\times \lambda(Y_k)$ by
\begin{equation*}
    \xi(\cdot,\lambda(y))=1_{\gamma^{-1}(D(y))}
\end{equation*}
for $y\in Y_k$.
Since $\textup{H}^1(G,R_G)$ is represented by $\mathcal{C}$, there exist $\{\xi_i\}_{i=1}^n\subset Z_G$ satisfying $\xi_i(\cdot, x)=1_{C_i(x)}$ with $C_i(x)\in\mathcal{C}\cup\{\varnothing\}$, and $\{f_i\}_{i=1}^n\in l^\infty_R(X)$ such that
\begin{equation*}
    \xi-\sum_{i=1}^n \xi_if_i\in \iota(l^\infty_R(X))+R_G.
\end{equation*}
Then by composing $\lambda\times \lambda$, we have
\begin{equation*}
    \xi\circ (\lambda\times \lambda)-\sum_{i=1}^n (\xi_i\circ (\lambda\times \lambda)) (f_i\circ \lambda) \in \iota(l^\infty_R(Y))+R_{G'}.
\end{equation*}
Now for every $y\in Y_k$, we have $\xi(\lambda(\cdot),\lambda(y))=1_{(\gamma\circ \lambda)^{-1}(D(y))}$ by definition. Since $\gamma\circ \lambda$ is uniformly close to $\textup{id}_Y$, there exists $r>0$ such that
\begin{equation*}
    D(y)\bigtriangleup (\gamma\circ\lambda)^{-1}(D(y))\subset B_{G'}(r;y)
\end{equation*}
for every $y\in Y_k$. Hence we have $\eta- \xi\circ (\lambda\times\lambda)\in R_{G'}$ and thus
\begin{equation*}
    \eta - \sum_{i=1}^n (\xi_i\circ (\lambda\times \lambda)) (f_i\circ \lambda) \in \iota(l^\infty_R(Y))+R_{G'}.
\end{equation*}
Since $\xi_i(\lambda(\cdot),\lambda(y))=1_{\lambda^{-1}(C_i(\lambda(y)))}$ and $\lambda^{-1}(C_i(\lambda(y)))\in\lambda^{-1}(\mathcal{C})$ hold for $y\in Y_k$, the function $\eta$ is represented by ${\lambda^{-1}(\mathcal{C})}^*$.

(ii) For a cut $D\subset [x]_H$ of $H$, let $\widehat{D}\subset [x]_G$ be the cut of $G$ such that $\partial_{\textup{oe}}^G\widehat{D}=\partial_\textup{oe}^HD$. Note that such $\widehat{D}$ exists and satisfies $\widehat{D}\cap [x]_H=D$ since we have $G=H\ast K$.

Let $\eta\in Z_H$ be defined by $\eta(\cdot, x)=1_{D(x)}$ with $D(x)\in\textup{Cut}(H)\cup\{\varnothing\}$. We define $\widehat{\eta}\in Z_G$ by $\widehat{\eta}(\cdot,x)=1_{\widehat{D(x)}}$, where $\widehat{\varnothing}=\varnothing$. Since $\textup{H}^1(G,R_G)$ is generated by $\mathcal{C}$, there exist $\{\xi_i\}_{i=1}^n\subset Z_G$ satisfying $\xi_i(\cdot,x)=1_{C_i(x)}$ with $C_i(x)\in\mathcal{C}\cup\{\varnothing\}$, and $\{f_i\}_{i=1}^n\in l^\infty_R(X)$ such that
\begin{equation*}
    \widehat{\eta}-\sum_{i=1}^n \xi_if_i\in \iota(l^\infty_R(X))+R_G.
\end{equation*}
Then by restricting to $E_H$, we have 
\begin{equation*}
    \eta-\sum_{i=1}^n \xi_i|_{E_H}f_i\in \iota(l^\infty_R(X))|_{E_H}+R_H,
\end{equation*}
where $\iota(l^\infty_R(X))|_{E_H}=\{\iota(f)|_{E_H}\mid f\in l^\infty_R(X)\}$.
Since $\xi_i|_{E_H}(\cdot,x)=1_{C_i(x)\cap [x]_H}$ and $C_i(x)\cap [x]_H\in \mathcal{C}|_H$ hold, the function $\eta$ is represented by ${\mathcal{C}|_H}^*$.
\end{proof}

Now we show the converse of Theorem \ref{intr1}:

\begin{prop}\label{bk14}
Suppose that there exists a quasi-isometry $\gamma:(X,G)\to (Y,G')$, where $(Y,G')$ is a graph with uniformly bounded degrees such that:
\begin{enumerate}
    \item $G'=T\ast H$ with $T$ and $H$ subgraphs of $G'$.
    \item $T$ is acyclic.
    \item $H$ is uniformly at most one-ended.
\end{enumerate}
Then $\textup{H}^1(G,R_G)$ is finitely generated as a right $R_G$-module.
\end{prop}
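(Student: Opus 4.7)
The plan is to establish finite generation of $\textup{H}^1(G',R_{G'})$ as a right $R_{G'}$-module and then transfer the conclusion back to $(X,G)$ via the quasi-isometry. By Proposition \ref{intr10}(i) it suffices to exhibit a cutset $\mathcal{C}'$ of $G'$ with uniformly bounded boundaries that generates $\textup{H}^1(G',R_{G'})$; once this is done, a quasi-isometric inverse $\lambda:(Y,G')\to(X,G)$ of $\gamma$ yields via Lemma \ref{mos6}(i) that ${\lambda^{-1}(\mathcal{C}')}^*$ generates $\textup{H}^1(G,R_G)$, whose boundaries remain uniformly bounded by the same reasoning as in the proof of Lemma \ref{lot11}(i), and another application of Proposition \ref{intr10}(i) delivers the conclusion.

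To construct $\mathcal{C}'$, I would exploit the freely intersecting property of $E_T$ and $E_H$. For each $T$-edge $e=(v,w)$, removing $\{(v,w),(w,v)\}$ from $G'$ disconnects $[v]_{G'}$ into two pieces, since any further $G'$-path from $v$ to $w$ would yield a closed alternating sequence contradicting Definition \ref{intr14}. Let $C_e$ be the component containing $v$; then $\partial_{\textup{iv}}^{G'}C_e=\{v\}$, so $\mathcal{C}'=\{C_e,\overline{C_e}\mid e\in T\}$ is a cutset of boundary diameter $0$.

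The main step is showing that $\mathcal{C}'$ generates $\textup{H}^1(G',R_{G'})$. Take $\xi\in Z_{G'}^k$ with $\xi(\cdot,y)=1_{D(y)}$; then the endpoints of $\partial_{\textup{oe}}^{G'}D(y)$ lie in $B_{G'}(k+1;y)$. The same free-product argument shows that the structure tree $T'$, whose vertex set is $Y/E_H$ and whose edge set is induced by $T$, is an actual tree on each $E_{G'}$-class, and that $H$ is coarsely embedded in $G'$. Combined with the uniform at-most-one-endedness of $H$, this yields a uniform constant $r$ such that for each $E_H$-class $\omega_H$ split by $D(y)$, one of $D(y)\cap\omega_H$ and $\omega_H\setminus D(y)$ has $H$-diameter at most $r$. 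Modify $D(y)$ to $D^*(y)$ by absorbing each split $\omega_H$ entirely into its large side; then $D(y)\bigtriangleup D^*(y)\subset B_{G'}(k+r+1;y)$, so $\xi_D-\xi_{D^*}\in R_{G'}$.

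Because $D^*(y)$ is a union of $E_H$-classes, it corresponds to a subset $V^*(y)\subset V(T')$ whose $T'$-boundary consists of the $T$-edges in $\partial_{\textup{oe}}^{G'}D^*(y)$, whose cardinality is uniformly bounded in $y$. Because $T'$ is a tree, a standard inclusion–exclusion on tree-cuts writes $1_{D^*(y)}=\sum_j 1_{C_{e_j(y)}}+c(y)\cdot 1_{[y]_{G'}}$ with a uniformly bounded number of terms and a uniformly bounded integer $c(y)$; setting $\xi_j(\cdot,y)=1_{C_{e_j(y)}}$ (padded with $\varnothing$ as needed) and $f_j\equiv 1$ expresses $\xi_D$ modulo $R_{G'}+\iota(l^\infty_R(Y))$ as a finite $R_{G'}$-linear combination of cuts from $\mathcal{C}'$. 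The main obstacles I expect are (i) verifying that $T'$ is a tree on each $E_{G'}$-class and that $H$ is coarsely embedded in $G'$, both of which require careful use of the freely intersecting property in the style of the argument in the proof of Proposition \ref{lot8}, and (ii) ensuring that the number of $T'$-boundary edges of $D^*(y)$ is bounded uniformly in $y$ so that the index set for the $\xi_j$ can be chosen independently of $y$.
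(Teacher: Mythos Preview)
Your approach is essentially the paper's: define the cutset $\mathcal{C}'=\{C_e\mid e\in T\}$ coming from $T$-edges, make an arbitrary $C(y)$ into an $E_H$-invariant set using the uniform at-most-one-endedness of $H$, then express the resulting set as a bounded combination of $T$-cuts, and finally pull the generating cutset back along the quasi-isometry via Lemma~\ref{mos6}(i) and conclude with Proposition~\ref{intr10}(i). The paper streamlines the last combinatorial step by first reducing to the case where the modified $C(y)$ is $G'$-connected and then writing $[y]_{G'}\setminus C(y)=\bigsqcup_{t\in\partial_{\textup{ie}}^{G'}C(y)}C_t$; your inclusion--exclusion on the quotient tree is the same identity in slightly more general form. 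Your explicit concerns (that $H$ is coarsely embedded in $G'$ and that the quotient of $G'$ by $E_H$ is a tree) are genuine ingredients---the paper uses them implicitly when invoking uniform one-endedness of $H$ with respect to $d_H$ and when asserting the disjoint-union identity---and both follow from the freely intersecting hypothesis together with $E_T\cap E_H=\Delta_Y$, exactly as you indicate.

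One small slip: in the transfer step you should apply Lemma~\ref{mos6}(i) directly to $\gamma:(X,G)\to(Y,G')$, obtaining that $\gamma^{-1}(\mathcal{C}')^*$ generates $\textup{H}^1(G,R_G)$; writing $\lambda^{-1}(\mathcal{C}')$ with $\lambda:Y\to X$ does not typecheck, since $\mathcal{C}'$ already lives on $Y$. This is purely notational and does not affect the argument.
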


\begin{proof}
For $t\in T$, there exists a unique cut $C_t$ of $G'$ such that $\partial_\textup{oe}^{G'}C_t=\{t\}$ since $T$ is acyclic and $G'=T\ast H$. We will show that $\textup{H}^1(G',R_{G'})$ is generated by the cutset $\mathcal{C}=\{C_t\mid t\in T\}$. Let $\xi\in Z_{G'}$ be defined by $\xi(\cdot,y)=1_{C(y)}$ with $C(y)\in \textup{Cut}(G')\cup\{\varnothing\}$. Since $H$ is uniformly at most one-ended, there exist $r>0$ such that if $C(y)$ intersects an $E_H$-class $\omega$ non-trivially (i.e., $C(y)\cap \omega\neq \varnothing,\omega$), then either $\textup{diam}_H(\omega\cap C(y))\leq r$ or $\textup{diam}_H(\omega\setminus C(y))\leq r$ holds. Then we remove the subset $\omega\cap C(y)$ from $C(y)$ if the former holds, and add the subset $\omega\setminus C(y)$ to $C(y)$ if the latter holds. This operation changes the function $\xi$ only up to $R_{G'}$ since we only need to consider $E_H$-classes $\omega$ such that $H|_\omega\cap \partial_\textup{ie}^{G'}C(y)\neq \varnothing$. Hence we may assume that for every $y\in Y$, the set $C(y)$ is $E_H$-invariant, which implies $\partial_\textup{ie}^{G'}C(y)\subset T$. Moreover, since $C(y)$ has finitely many $G'$-connected components (and the number of them is uniformly bounded), we may assume that every $C(y)$ is $G'$-connected. Then we have $[y]_{G'}\setminus C(y)=\bigsqcup_{t\in\partial_\textup{ie}^{G'}C(y)}C_t$ since $T$ is acyclic and $G'=T\ast H$. Hence $\xi$ is represented by $\mathcal{C}$, and $\textup{H}^1(G',R_{G'})$ is generated by $\mathcal{C}$.

By Lemma \ref{mos6} (i), the set $\textup{H}^1(G,R_G)$ is generated by the cutset $\gamma^{-1}(\mathcal{C})^*$, which has uniformly bounded boundaries. This proves the proposition by Proposition \ref{intr10} (i).
\end{proof}

The following technical lemma will be used in section \ref{smos1}.

\begin{lem}\label{mos5}
Let $\mathcal{C}=\{C_i\}_{i\in I}$ and $\mathcal{C}'=\{C_i'\}_{i\in I}$ be cutsets of $G$ indexed by a set $I$. Suppose that there exists $r>0$ such that for every $C_i\in\mathcal{C}$, we have
\begin{equation*}
    C_i'\subset B_G(r;C_i)\textup{ and }\overline{C_i'}\subset B_G\mleft(r;\overline{C_i}\mright).
\end{equation*}
Then every $\xi\in Z_G$ represented by $\mathcal{C}$ is also represented by $\mathcal{C}'$.
\end{lem}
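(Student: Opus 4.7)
The plan is to substitute each cut in a given representation of $\xi$ by $\mathcal{C}$ with the corresponding cut in $\mathcal{C}'$ (under the common index $I$), and then show that this substitution alters each summand only by an element of $R_G$.

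Explicitly, suppose $\xi - \sum_{k=1}^n \xi_k f_k \in \iota(l^\infty_R(X)) + R_G$, where $\xi_k(\cdot,x) = 1_{C_k(x)}$ with $C_k(x) \in \mathcal{C} \cup \{\varnothing\}$ and $f_k \in l^\infty_R(X)$. For each $(k,x)$ with $C_k(x) \neq \varnothing$, choose $i(k,x) \in I$ satisfying $C_{i(k,x)} = C_k(x)$ and define $\xi'_k(\cdot,x) = 1_{C'_{i(k,x)}}$; set $\xi'_k(\cdot,x) = 0$ in the remaining case. I will verify (a) $\xi'_k \in Z_G$, so that the $\{\xi'_k\}_{k=1}^n$ provide a legitimate representation candidate over $\mathcal{C}'$, and (b) $\xi_k - \xi'_k \in R_G$. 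From (b), together with the fact that $R_G$ is closed under right multiplication by $l^\infty_R(X) \subset R_G$, one deduces $\sum_k \xi'_k f_k \equiv \sum_k \xi_k f_k \pmod{R_G}$, and hence $\xi - \sum_k \xi'_k f_k \in \iota(l^\infty_R(X)) + R_G$, which is exactly the assertion that $\xi$ is represented by $\mathcal{C}'$.

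Both (a) and (b) rest on a single elementary geometric fact: for each $i \in I$, the symmetric difference $C_i \bigtriangleup C'_i$ and the boundary $\partial_\textup{iv}^G C'_i \cup \partial_\textup{ov}^G C'_i$ both lie in the $r$-neighbourhood (in $d_G$) of $\partial_\textup{iv}^G C_i \cup \partial_\textup{ov}^G C_i$. Indeed, a point $y$ of the symmetric difference lies within $r$ of both $C_i$ and $\overline{C_i}$, so a shortest path from a nearby point of $C_i$ to a nearby point of $\overline{C_i}$ passes through a boundary vertex of $C_i$ at uniformly bounded distance from $y$; and if $y \in \partial_\textup{iv}^G C'_i$ has a neighbour $z \in \overline{C'_i}$, then $y$ is within $r$ of $C_i$ and $z$ is within $r$ of $\overline{C_i}$, so the same path argument produces a boundary vertex of $C_i$ near $y$. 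Since $\xi_k \in Z_G^l$ for some $l$ forces $\partial_\textup{iv}^G C_k(x) \cup \partial_\textup{ov}^G C_k(x) \subset B_G(l+1; x)$ uniformly in $x$, both (a) and (b) follow with uniform constants depending only on $l$ and $r$.

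I do not anticipate any serious obstacle: the argument is essentially a bookkeeping exercise with neighbourhoods in the path metric $d_G$. The only minor subtlety is the arbitrary choice of $i(k,x)$, which is harmless here since the statement is in the abstract-graph setting and no measurable selection is required.
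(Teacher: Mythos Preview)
Your proposal is correct and follows essentially the same route as the paper: both arguments reduce to the geometric claim that $C_i\bigtriangleup C_i'$ is contained in a uniform neighbourhood of the vertex boundary of $C_i$, and then combine this with the fact that $\xi_k\in Z_G^l$ forces that boundary into $B_G(l+1;x)$, yielding $\xi_k-\xi_k'\in R_G$. The paper states only your point (b), since (a) follows from it automatically (as $R_G\subset Z_G$), and it phrases the argument for a single basic $\xi$ rather than carrying the full sum $\sum_k\xi_k f_k$ through, but these are cosmetic differences; your remark about the arbitrary choice of $i(k,x)$ is a fair point the paper leaves implicit.
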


\begin{proof}
We claim that for every $i\in I$, we have
\begin{equation*}
    C_i\bigtriangleup C_i' \subset B_G\mleft(r;\partial_\textup{ov}^GC_i\mright).
\end{equation*}
Indeed, for $x\in C_i\setminus C_i'$, we have $x\in C_i\cap B_G\mleft(r;\overline{C_i}\mright)$ by assumption. Then any $G$-path from $x$ to a point in $\overline{C_i}$ of length at most $r$ must intersect $\partial_\textup{ov}^GC_i$, and thus $x\in B_G\mleft(r;\partial_\textup{ov}^GC_i\mright)$. We can show $C_i'\setminus C_i \subset B_G\mleft(r;\partial_\textup{ov}^GC_i\mright)$ in the same way.

Let $\xi\in Z_G$ be defined by $\xi(\cdot,x)=1_{C(x)}$ with $C(x)\in\mathcal{C}\cup\{\varnothing\}$. Take $k\in\mathbb{Z}_{\geq 0}$ so that $\xi\in Z_G^k$. For $x\in X$, let $C'(x)=C_i'$ if $C(x)=C_i$ and let $C'(x)=\varnothing$ if $C(x)=\varnothing$. Define $\xi'\in Z_G$ by $\xi'(\cdot,x)=1_{C'(x)}$. Then for every $x\in X$ with $C(x)\in \mathcal{C}$, we have
\begin{equation*}
    C(x)\bigtriangleup C'(x)\subset B_G\mleft(r;\partial_\textup{ov}^GC(x)\mright)\subset B_G(r+k;x),
\end{equation*}
which implies that $\xi-\xi'\in R_G$. Since $\xi'$ is represented by $\mathcal{C}'$, so is $\xi$.
\end{proof}

\section{Proof of the theorems}\label{smos}

\subsection{Proof of Theorem \ref{intr1}} \label{smos1}

Let $R$ be a non-zero commutative ring and $(X,G)$ a Borel graph with uniformly bounded degrees. The following lemma is a key for the proof of Theorem \ref{intr1}.

\begin{lem}\label{mos1}
Let $\mathcal{C}$ be a Borel treeset of $G$ with uniformly bounded boundaries. Then there exist a Borel graph $(Y,G')$ with uniformly bounded degrees and an injective Borel quasi-isometry $\gamma:(X,G)\rightarrow (Y,G')$ such that:
\begin{enumerate}
    \item $G'=T\ast H$, with $T$ and $H$ Borel subgraphs of $G'$.
    \item $T$ is acyclic.
    \item If $\lambda:(Y,G')\rightarrow (X,G)$ is a Borel quasi-isometric inverse of $\gamma$ with $\lambda\circ\gamma=\textup{id}_X$, then every element of $Z_H$ represented by $\lambda^{-1}(\mathcal{C}){|_H}^*$ is trivial in $\textup{H}^1(H,R_H)$.
\end{enumerate}
\end{lem}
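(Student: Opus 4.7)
The plan is to reduce to Proposition \ref{lot8}, which already produces the desired decomposition but requires the stronger hypothesis that every edge is crossed by at most one cut of the treeset. To enforce this, I subdivide $G$. By Lemma \ref{lot4}, the integer
\[
n_e = |\{C \in \mathcal{C} : x \in C,\ y \notin C\}|
\]
for $e = (x,y) \in G$ is uniformly bounded. Using Lusin--Novikov, fix a Borel subset $G^+ \subset G$ containing exactly one of $(x,y),(y,x)$ for each unordered edge, and set
\[
Y = X \sqcup \{(e, j) : e \in G^+,\ 1 \le j \le n_e - 1\},
\]
letting $G_\textup{subdiv}$ be the Borel graph on $Y$ obtained by replacing each oriented edge $e = (x, y) \in G^+$ by a path of length $n_e$ through its subdivision vertices. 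Then the inclusion $\gamma : X \hookrightarrow Y$ is an injective Borel quasi-isometry $(X, G) \to (Y, G_\textup{subdiv})$.

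Since $\mathcal{C}$ is nested, for each $e \in G^+$ the cuts separating its endpoints form a chain $C_1 \supsetneq \cdots \supsetneq C_{n_e}$ with $x \in C_i$. In a Borel manner, extend each $C \in \mathcal{C}$ to a cut $\widetilde{C}$ of $G_\textup{subdiv}$ by placing the $j$-th subdivision vertex of $e$ in $\widetilde{C_i}$ iff $j \le n_e - i$. The resulting Borel cutset $\widetilde{\mathcal{C}}$ has uniformly bounded boundaries and every edge of $G_\textup{subdiv}$ is crossed by at most one cut of $\widetilde{\mathcal{C}}$; by Lemma \ref{lot1} it is a Borel treeset. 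Applying Proposition \ref{lot8} then produces a Borel graph $G' = T \ast H$ on $Y$, Lipschitz equivalent to $G_\textup{subdiv}$, with $T$ Borel acyclic and $\widetilde{\mathcal{C}}|_H^* = \varnothing$, giving (i) and (ii); composition shows $\gamma : (X, G) \to (Y, G')$ remains an injective Borel quasi-isometry.

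For (iii), I take the Borel quasi-isometric inverse $\lambda$ of $\gamma$ (with $\lambda\circ\gamma=\textup{id}_X$) so that whenever a subdivision vertex $v$ lies in an $E_H$-class $\omega$ with $\omega \cap X \ne \varnothing$, one has $\lambda(v) \in \omega \cap X$, which is possible by Lusin--Novikov. Under this choice, for every such $\omega$ and every $C \in \mathcal{C}$, the set $\lambda^{-1}(C) \cap \omega$ coincides with $\widetilde{C} \cap \omega \in \{\varnothing, \omega\}$, and so contributes no proper cut to $\lambda^{-1}(\mathcal{C})|_H^*$. For the remaining $E_H$-classes, which consist purely of subdivision vertices, I use the structure of the added $G_1$-edges from the proof of Proposition \ref{lot8}: each subdivision vertex lies in at most two $\partial_\textup{iv}^{G_\textup{subdiv}} \widetilde{C}$-cliques, corresponding to the two cuts adjacent to its position along its edge, each clique of uniformly bounded size. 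A chaining argument, using the nestedness of $\widetilde{\mathcal{C}}$ and the fiber structure of $\rho : Y \to V_{\widetilde{\mathcal{C}}}$, shows that such purely-subdivision classes are uniformly $H$-bounded; hence any cut they contain gives a trivial element of $Z_H$. Combining the two cases, every element of $Z_H$ represented by $\lambda^{-1}(\mathcal{C})|_H^*$ lies in $\iota(l^\infty_R(Y)) + R_H$, and is therefore trivial in $\textup{H}^1(H, R_H)$ by Proposition \ref{bk10}.

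The main technical obstacle is the last point: controlling the $H$-geometry of the purely-subdivision $E_H$-classes. This requires tracking precisely which $\partial_\textup{iv}^{G_\textup{subdiv}} \widetilde{C}$-cliques contain a given subdivision vertex and ruling out unbounded chains of such cliques through the fiber structure of $\rho$. Once this is secured, the verification of (iii) is straightforward.
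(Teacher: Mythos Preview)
Your construction of $(Y,G')$ via edge subdivision, extension of cuts to $\widetilde{\mathcal{C}}$, and application of Proposition~\ref{lot8} is essentially identical to the paper's argument; conditions (i) and (ii) follow exactly as you say.

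The divergence is in (iii), and here your approach has two genuine gaps. First, condition (iii) is a universal statement: it must hold for \emph{every} Borel quasi-isometric inverse $\lambda$ with $\lambda\circ\gamma=\textup{id}_X$, not merely for one well-chosen $\lambda$. Your argument fixes a particular $\lambda$ sending subdivision vertices into the $X$-part of their $E_H$-class, so even if the rest went through you would only have established an existential version of the condition. Second, and more seriously, your treatment of purely-subdivision $E_H$-classes is incomplete: you assert that a ``chaining argument'' through the fiber structure of $\rho$ shows these classes are uniformly $H$-bounded, but you do not give this argument, and you yourself flag it as the main technical obstacle. It is not evident that the claim is true, since the tree vertex $\rho(y_i^e)\in V_{\widetilde{\mathcal{C}}}$ can have arbitrarily many minimal cuts (one for each maximal $\overline{D}\subset C_i^e\setminus C_{i-1}^e$ with $D\in\mathcal{C}$), and the corresponding boundary cliques need not chain together in a bounded way.

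The paper avoids this case analysis entirely. The key observation is that for \emph{any} $\lambda$ with $\lambda|_X=\textup{id}_X$ one automatically has $C\subset\lambda^{-1}(C)$, and since $\widetilde{C}\subset B_{G'}(r;C)$ for a uniform $r$, also $\widetilde{C}\subset B_{G'}(r;\lambda^{-1}(C))$; by closure under complementation the same holds with bars. Lemma~\ref{mos5} then gives directly that any $\widehat{\xi}\in Z_{G'}$ represented by $\lambda^{-1}(\mathcal{C})^*$ is already represented by $\widetilde{\mathcal{C}}$, and restricting to $E_H$ one lands in $\widetilde{\mathcal{C}}|_H^*=\varnothing$, hence the restriction is trivial. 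This works uniformly in $\lambda$ and needs no control over the geometry of individual $E_H$-classes.
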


\begin{proof}
First, we construct the standard Borel space $Y$ by adding new vertices on edges of $G$. Take a Borel subset $G^+\subset G$ so that for every $(x_0,x_1)\in G$, we have $|\{(x_0,x_1),(x_1,x_0)\}\cap G^+|=1$. By Lemma \ref{lot4}, we have
\begin{equation*}
    N:=\sup_{(x_0,x_1)\in G^+}|\{C\in\mathcal{C}\mid x_0\in C,\ x_1\notin C\}|<\infty.
\end{equation*}

Let $e=(x_0,x_1)\in G^+$. Since $\mathcal{C}$ is nested, the elements of the set 
\begin{equation*}
    \{C\in\mathcal{C}\mid x_0\in C,\ x_1\notin C\}
\end{equation*}
are ordered as $C_0^e\subsetneq C_1^e\subsetneq \cdots\subsetneq C_n^e$ with $n\leq N-1$. Then we add $n$ vertices $y_1^e,\ldots,y_n^e$ on the edge $e$ in order from the side of $x_0$. In other words, we replace the edge $e=(x_0,x_1)$ by the graph $\{(y_i^e,y_{i+1}^e)\}_{i=0}^n$, where $y_0^e=x_0$ and $y_{n+1}^e=x_1$. We do this procedure for all $e\in G^+$, and then the oriented graph $(X,G^+)$ is changed into a new oriented graph, which we denote by $(Y,G_1^+)$. Set $G_1=G_1^+\sqcup (G_1^+)^{-1}$. Then $(Y,G_1)$ is naturally a Borel graph since the set of added vertices 
\begin{equation*}
    Y\setminus X=\{y_i^e\mid 1\leq i\leq N \textup{ such that } C_i^e\textup{ exists}\}
\end{equation*}
is identified with the Borel set
\begin{equation*}
    \{(e,i)\in G^+\times\mathbb{Z} \mid 1\leq i\leq N \textup{ such that } C_i^e\textup{ exists}\}.
\end{equation*}
Note that the inclusion map $(X,G)\rightarrow (Y,G_1)$ is an injective Borel quasi-isometry.

Now for $C\in\mathcal{C}$, we define the cut $C'$ of $G_1$ as follows: For $x\in X$, let $x\in C'$ if and only if $x\in C$. For $y_i^e\in Y\setminus X$,
\begin{itemize}
    \item if $C$ does not separate the endpoints of $e$, then let $y_i^e\in C'$ if and only if the endpoints of $e$ are in $C$,
    \item if $C=C_j^e$, then let $y_i^e\in C'$ if and only if $i\leq j$, and
    \item if $\overline{C}=C_j^e$, then let $y_i^e\in C'$ if and only if $i> j$.
\end{itemize}
Then $\mathcal{C}'=\{C'\}_{C\in\mathcal{C}}$ is a Borel treeset of $G_1$ with uniformly bounded boundaries. Indeed, it is nested and closed under complementation since the map $C\in\mathcal{C}\mapsto C'\in\mathcal{C}'$ preserves the order and complements. Also, we have
\begin{equation*}
    \partial_\textup{iv}^{G_1}C'\subset B_{G_1}\mleft(N;\partial_\textup{iv}^GC\mright)
\end{equation*}
for every $C\in\mathcal{C}$, and thus $\mathcal{C}'$ has uniformly bounded boundaries. These imply that $\mathcal{C}'$ is a Borel treeset of $G_1$ by Lemma \ref{lot1}. 
Moreover for every $(x,y)\in G_1$, we have
\begin{equation*}
    |\{C'\in\mathcal{C}'\mid x\in C'\ y\notin C'\}|\leq 1
\end{equation*}
by construction.

Then by Proposition \ref{lot8}, there exists a Borel graph $G'$ on $Y$ Lipschitz equivalent to $G_1$ such that:
\begin{itemize}
    \item $G'=T\ast H$ with $T$ and $H$ Borel subgraphs of $G'$. 
    \item $T$ is acyclic.
    \item $\mathcal{C}'{|_H}^*=\varnothing$.
\end{itemize}

It is left to show condition (iii).
Let $\lambda:(Y,G')\rightarrow (X,G)$ be a Borel quasi-isometric inverse of the inclusion $(X,G)\rightarrow (Y,G')$ with $\lambda|_X=\textup{id}_X$.
Since $G_1$ and $G'$ are Lipschitz equivalent, there exists $r>0$ such that $C'\subset B_{G_1}(N;C)\subset B_{G'}(r; C)$ for every $C\in\mathcal{C}$.
Then for every $C\in\mathcal{C}$, we have
\begin{equation}
    C'\subset B_{G'}\mleft(r;\lambda^{-1}(C)\mright)\ \textup{ and }\ 
    \overline{C'}\subset B_{G'}\mleft(r;\overline{\lambda^{-1}(C)}\mright) \label{eqmos1}
\end{equation}
since $C\subset\lambda^{-1}(C)$ and $\mathcal{C}$ is closed under complementation.

Now let $\xi\in Z_H$ be defined by $\xi(\cdot,y)=1_{\lambda^{-1}(C(y))\cap [y]_H}$ with $C(y)\in\mathcal{C}\cup\{\varnothing\}$. Define $\widehat{\xi}\in Z_{G'}$ by $\widehat{\xi}(\cdot,y)=1_{\lambda^{-1}(C(y))}$. By Lemma \ref{mos5} and claim \eqref{eqmos1}, the function $\widehat{\xi}$ is represented by $\mathcal{C'}$. This implies that $\xi=\widehat{\xi}|_{E_H}$ is represented by ${\mathcal{C}'|_H}^*=\varnothing$. Hence $\xi$ is trivial in $\textup{H}^1(H,R_H)$.
\end{proof}

 We also need the following lemma.

\begin{lem}\label{lot5}
Suppose that $G=H\ast K$ with $H$ and $K$ Borel subgraphs of $G$. Let $\gamma:(X,H)\rightarrow (Y,H')$ be an injective Borel quasi-isometry. Then $E_{H'}$ and $E_{(\gamma\times\gamma)(K)}$ are freely intersecting, and $\gamma:(X,G)\rightarrow (Y,H'\ast (\gamma\times\gamma)(K))$ is also a Borel quasi-isometry.
\end{lem}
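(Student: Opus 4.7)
The plan has two parts, corresponding to the two claims.

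For the free intersection assertion, the key observation is that any $(\gamma\times\gamma)(K)$-path visits only vertices in $\gamma(X)$, so if $(y,y')\in E_{(\gamma\times\gamma)(K)}$ then both $y,y'\in\gamma(X)$. Given an alternating sequence $\{y_i\}_{i=0}^{2n}$ as in Definition \ref{intr14}, this forces $y_1,\ldots,y_{2n}\in\gamma(X)$; if $y_0\notin\gamma(X)$ then automatically $y_0\neq y_{2n}$, so we may reduce to the case $y_i=\gamma(x_i)$ for unique $x_i\in X$. I would then verify that $(x_{2i},x_{2i+1})\in E_H\setminus\Delta_X$ using that the quasi-isometry $\gamma:(X,H)\to(Y,H')$ is a reduction of equivalence relations together with injectivity, and that $(x_{2i+1},x_{2i+2})\in E_K\setminus\Delta_X$ by lifting a $(\gamma\times\gamma)(K)$-path vertex by vertex back to a $K$-path using injectivity of $\gamma\times\gamma$. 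The free intersection of $E_H$ and $E_K$ granted by $G=H\ast K$ then yields $x_0\neq x_{2n}$, hence $y_0\neq y_{2n}$ by injectivity of $\gamma$.

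For the quasi-isometry claim, write $G''=H'\cup(\gamma\times\gamma)(K)=H'\ast(\gamma\times\gamma)(K)$, which is Borel since $\gamma\times\gamma$ is an injective Borel map between standard Borel spaces. Condition (ii) of Definition \ref{intr3} is automatic: since $H'\subset G''$ and $\gamma:(X,H)\to(Y,H')$ is already a quasi-isometry, $\sup_{y\in Y}d_{G''}(y,\gamma(X))\leq\sup_{y\in Y}d_{H'}(y,\gamma(X))<\infty$. By Remark \ref{pre4}, $\gamma$ is $l_0$-biLipschitz from $(X,H)$ to $(Y,H')$ for some $l_0\geq 1$. The upper bound in condition (i) follows edge-by-edge: each edge of $H$ maps to an $H'$-path of length at most $l_0$ and each edge of $K$ maps to a single $(\gamma\times\gamma)(K)$-edge, giving $d_{G''}(\gamma(x_1),\gamma(x_2))\leq l_0\,d_G(x_1,x_2)$.

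The main obstacle is the lower bound $d_G(x_1,x_2)\leq l_0\,d_{G''}(\gamma(x_1),\gamma(x_2))$. Given a $G''$-path $y_0,\ldots,y_m$ from $\gamma(x_1)$ to $\gamma(x_2)$, I would split it at every $(\gamma\times\gamma)(K)$-edge; the resulting maximal $H'$-subsegments have endpoints which are either $y_0$, $y_m$, or one side of a $(\gamma\times\gamma)(K)$-edge, and thus all lie in $\gamma(X)$. For each such $H'$-subsegment of length $\ell$ joining $\gamma(a)$ to $\gamma(b)$, the biLipschitz inequality applied at the endpoints gives $d_H(a,b)\leq l_0\ell$, even though the intermediate vertices may leave $\gamma(X)$. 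Concatenating these $H$-walks with the lifted $K$-edges — each $(\gamma\times\gamma)(K)$-edge pulls back to a $K$-edge by injectivity — produces a $G$-walk from $x_1$ to $x_2$ of total length at most $l_0 m$. The subtle point, and where I expect most of the care to go, is precisely that one cannot lift the $G''$-path vertex by vertex, since $H'$-segments may wander outside $\gamma(X)$; one must apply the biLipschitz bound only at segment endpoints and absorb the interior of each $H'$-segment into a single distance estimate.
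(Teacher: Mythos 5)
Your proof is correct, and the first half (free intersection) is essentially identical to the paper's: lift the alternating sequence back to $X$ using injectivity of $\gamma$ and the fact that endpoints of $(\gamma\times\gamma)(K)$-edges lie in $\gamma(X)$, then invoke $G=H\ast K$. For the quasi-isometry half you take a mildly different route. The paper fixes an alternating sequence $\{x_i\}_{i=0}^{2n}$ in $X$ and uses the exact additivity of distances along the normal form of a free product, namely $d_G(x_0,x_{2n})=\sum_{i}\bigl(d_H(x_{2i},x_{2i+1})+d_K(x_{2i+1},x_{2i+2})\bigr)$ together with its analogue in $G'=H'\ast(\gamma\times\gamma)(K)$ (the latter available only after the free-intersection claim is proved), and then compares the two sums term by term via the biLipschitz constant on $H$-pieces and the exact identity $d_K=d_{(\gamma\times\gamma)(K)}\circ(\gamma\times\gamma)$ on $K$-pieces. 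You instead prove the upper bound edge-by-edge and the lower bound by splitting an arbitrary $G''$-path at its $(\gamma\times\gamma)(K)$-edges and applying the biLipschitz estimate only at the endpoints of each maximal $H'$-subsegment, which all lie in $\gamma(X)$. Your version buys a more self-contained argument: it does not require justifying the distance-additivity formula (which the paper asserts without proof), and your lower bound does not even use the free product structure of $G''$. The paper's version buys the cleaner exact two-sided identity $l^{-1}d_G\leq d_{G'}\circ(\gamma\times\gamma)\leq l\,d_G$ in one stroke. Your identification of the subtle point --- that $H'$-subsegments may wander outside $\gamma(X)$, so the biLipschitz bound can only be applied at segment endpoints --- is exactly right and is handled implicitly in the paper by working with $d_{H'}(\gamma(x_{2i}),\gamma(x_{2i+1}))$ rather than with individual path vertices.
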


\begin{proof}
Let $\{y_i\}_{i=0}^{2n}\subset Y$ be such that
\begin{equation*}
    (y_{2i},y_{2i+1})\in E_{H'}\setminus\Delta_Y\textup{ and }(y_{2i+1},y_{2i+2})\in E_{(\gamma\times\gamma)(K)}\setminus\Delta_Y
\end{equation*}
for $0\leq i\leq n-1$. Suppose toward a contradiction that $y_0= y_{2n}$. Since $\gamma$ is injective, we can take $(x_{2i-1},x_{2i})\in E_K\setminus \Delta_X$ so that $(\gamma(x_{2i-1}),\gamma(x_{2i}))=(y_{2i-1},y_{2i})$ for $1\leq i\leq n$. We set $x_0=x_{2n}$. Then we have $(x_{2i},x_{2i+1})\in E_{H}\setminus\Delta_X$ for $0\leq i\leq n-1$ since $E_H=(\gamma\times\gamma)^{-1}(E_{H'})$, which follows from that $\gamma$ is a quasi-isometry. This contradicts $G=H\ast K$. Hence $E_{H'}$ and $E_{(\gamma\times\gamma)(K)}$ are freely intersecting.

By Remark \ref{pre4}, $\gamma:(X,H)\rightarrow (Y,H')$ is $l$-biLipschitz for some $l\geq 1$. Note that $d_K=d_{(\gamma\times\gamma)(K)}\circ(\gamma\times\gamma)$. Let $\{x_i\}_{i=0}^{2n}\subset X$ be such that
\begin{align*}
    (x_0,x_1)\in E_H,\ (x_{2i-1},x_{2i})\in E_K\setminus\Delta_X,\\
    (x_{2i},x_{2i+1})\in E_H\setminus\Delta_X\textup{ and }(x_{2n-1},x_{2n})\in E_K
\end{align*}
for every $1\leq i\leq n-1$.
Set $G'=H'\ast (\gamma\times\gamma)(K)$. Then we have
\begin{align*}
    d_G(x_0,x_{2n})&=\sum_{i=0}^{n-1}(d_H(x_{2i},x_{2i+1})+d_K(x_{2i+1},x_{2i+2}))\textup{ and}\\
    d_{G'}(\gamma(x_0),\gamma(x_{2n}))
    &=\sum_{i=0}^{n-1}(d_{H'}(\gamma(x_{2i}),\gamma(x_{2i+1}))+d_{(\gamma\times\gamma)(K)}(\gamma(x_{2i+1}),\gamma(x_{2i+2})).
\end{align*}
Thus
\begin{align*}
    d_{G'}(\gamma(x_0),\gamma(x_{2n}))
    &\leq \sum_{i=0}^{n-1}(ld_H(x_{2i},x_{2i+1})+d_K(x_{2i+1},x_{2i+2}))\leq ld_G(x_0,x_{2n}) \textup{ and }\\
    d_{G'}(\gamma(x_0),\gamma(x_{2n}))
    &\geq \sum_{i=0}^{n-1}(l^{-1}d_H(x_{2i},x_{2i+1})+d_K(x_{2i+1},x_{2i+2}))\geq
    l^{-1}d_G(x_0,x_{2n}).
\end{align*}
Hence we have $l^{-1}d_G|_{E_G}\leq d_{G'}\circ(\gamma\times\gamma)|_{E_G}\leq ld_G|_{E_G}$.

We can also verify that $d_{G'}\circ(\gamma\times\gamma)<\infty$ implies $d_G<\infty$ by the same argument as in the first paragraph of this proof.
Finally we have
\begin{equation*}
    \sup_{y\in Y} d_{G'}(y,\gamma(X))\leq \sup_{y\in Y} d_{H'}(y,\gamma(X))<\infty.
\end{equation*}
Hence $\gamma:(X,G)\rightarrow (Y,G')$ is a Borel quasi-isometry.
\end{proof}

Now we write Theorem \ref{intr1} again, and prove it.

\begin{thm}\label{mos3}
Let $R$ be a non-zero commutative ring and $(X,G)$ a Borel graph with uniformly bounded degrees. Suppose that $\textup{H}^1(G,R_G)$ is finitely generated as a right $R_G$-module. Then there exists an injective Borel quasi-isometry $\gamma:(X,G)\to (Y,G')$, where $(Y,G')$ is a Borel graph with uniformly bounded degrees such that:
\begin{enumerate}
    \item $G'=T\ast H$ with $T$ and $H$ Borel subgraphs of $G'$.
    \item $T$ is acyclic.
    \item $H$ is uniformly at most one-ended.
\end{enumerate}
\end{thm}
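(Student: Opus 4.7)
The plan is to reduce the finite-generation hypothesis to an $(n+1)$-fold iteration of Lemma \ref{mos1}, one for each treeset in a suitable decomposition of a generating cutset for $\textup{H}^1(G, R_G)$.

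First, I would invoke Proposition \ref{intr10}(i), together with the explicit Borel construction in its proof (via the $G$-connected components of $[x]_G \setminus B_G(k;x)$), to produce a Borel cutset $\mathcal{C}$ of $G$ with uniformly bounded boundaries generating $\textup{H}^1(G, R_G)$. Closing $\mathcal{C}$ under complementation is a Borel operation preserving bounded boundaries and generation, so Lemma \ref{lot7} then decomposes $\mathcal{C} = \bigsqcup_{i=0}^{n} \mathcal{C}_i$ into finitely many Borel treesets.

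Next I would iterate Lemma \ref{mos1} once per $\mathcal{C}_i$. At step $i$, starting from an injective Borel quasi-isometry $\gamma^{(i-1)} : (X, G) \to (Y_{i-1}, G_{i-1}' = T_{i-1} \ast H_{i-1})$ with $T_{i-1}$ Borel acyclic, I pull $\mathcal{C}_i$ back through a Borel quasi-isometric inverse of $\gamma^{(i-1)}$ and restrict to $E_{H_{i-1}}$-classes. Lemma \ref{lot11} ensures the result is a Borel cutset of $H_{i-1}$ with uniformly bounded boundaries, and nestedness and closure under complementation survive both operations, so Lemma \ref{lot1} identifies it as a Borel treeset. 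Applying Lemma \ref{mos1} to $(Y_{i-1}, H_{i-1})$ with this treeset produces $\gamma_i : (Y_{i-1}, H_{i-1}) \to (Y_i, T_i^{\circ} \ast H_i)$; by Lemma \ref{lot5}, $\gamma_i$ upgrades to an injective Borel quasi-isometry of the full graph, and setting $T_i := (\gamma_i \times \gamma_i)(T_{i-1}) \cup T_i^{\circ}$, a short alternation argument (using freely-intersecting equivalence relations in the sense of Definition \ref{intr14}) shows $T_i$ is acyclic and $G_i' = T_i \ast H_i$.

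After $n+1$ steps I would have $\gamma : (X, G) \to (Y, G' = T \ast H)$ satisfying (i) and (ii). For (iii), chaining Lemma \ref{mos6}(i) through all the $\gamma_i$ and then applying Lemma \ref{mos6}(ii) to the final decomposition yields that $\textup{H}^1(H, R_H)$ is generated by $\lambda^{-1}(\mathcal{C})|_H^{*}$ for a suitable cumulative Borel quasi-isometric inverse $\lambda$ of $\gamma$ with $\lambda \circ \gamma = \textup{id}_X$. This cutset splits along $\{\mathcal{C}_i\}$, and each piece $\lambda^{-1}(\mathcal{C}_i)|_H^{*}$ produces only trivial elements of $\textup{H}^1(H, R_H)$: Lemma \ref{mos1}(iii) supplies this at step $i$ inside $\textup{H}^1(H_i, R_{H_i})$, and triviality propagates through subsequent steps because the subspace $R_{\bullet} + \iota(l^\infty_R(\bullet))$ is natural under composition with Borel quasi-isometric inverses and subgraph restrictions (cf.\ Lemma \ref{mos5}). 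Hence $\textup{H}^1(H, R_H) = 0$, and Proposition \ref{intr10}(ii) gives (iii). The hardest part will be the bookkeeping in this final step, in particular verifying that the triviality guaranteed by Lemma \ref{mos1}(iii) at step $i$ survives all later iterations; this amounts to tracking how representatives of $Z_{H_j}$ transform under the composite pull-back-and-restrict operations and checking preservation of membership in $R_{\bullet} + \iota(l^\infty_R(\bullet))$.
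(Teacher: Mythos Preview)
Your proposal is correct and follows essentially the same route as the paper: obtain a generating Borel cutset with uniformly bounded boundaries via Proposition \ref{intr10}(i), split it into finitely many Borel treesets via Lemma \ref{lot7}, then iterate Lemma \ref{mos1} once per treeset, using Lemma \ref{lot5} to carry the accumulated acyclic part forward. The one organizational difference is that the paper maintains, as an inductive invariant, that $\textup{H}^1(H_k,R_{H_k})$ is generated by $\bigcup_{i>k}\mathcal{C}_i^k$ (dropping $\mathcal{C}_k^k$ immediately via Lemma \ref{mos1}(iii) and then applying Lemma \ref{mos6} to pass to the next stage); this makes the conclusion $\textup{H}^1(H_n,R_{H_n})=0$ automatic and avoids the deferred ``triviality propagates'' bookkeeping you correctly flag as the hardest part.
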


\begin{proof}
Since $\textup{H}^1(G,R_G)$ is a finitely generated right $R_G$-module, by Proposition \ref{intr10} (i), there exists $k\geq 0$ such that $\textup{H}^1(G,R_G)$ is generated by the cutset
\begin{equation*}
    \mathcal{C}=\{C\in \textup{Cut}(G)\mid \textup{diam}_G(\partial_\textup{iv}^GC\cup \partial_\textup{ov}^GC)\leq k\}.
\end{equation*}
Note that this is a Borel cutset closed under complementation.
By Lemma \ref{lot7}, there exist Borel treesets $\mathcal{C}_i\ (i=0,1,...,n)$ of $G$ such that $\mathcal{C}=\bigcup_{i=0}^n\mathcal{C}_i$.

Applying Lemma \ref{mos1} to the Borel treeset $\mathcal{C}_0$, there exist a Borel graph $(X_0,G_0)$ with uniformly bounded degrees and an injective Borel quasi-isometry $\gamma_0:(X,G)\rightarrow (X_0,G_0)$ such that:
\begin{enumerate}
\setcounter{enumi}{3}
    \item $G_0=T_0\ast H_0$ with $T_0$ and $H_0$ Borel subgraphs of $G_0$.
    \item $T_0$ is acyclic.
    \item If $\lambda_0:(X_0,G_0)\rightarrow (X,G)$ is a Borel quasi-isometric inverse of $\gamma_0$ with $\lambda_0\circ\gamma_0=\textup{id}_{X}$, then every element of $Z_{H_0}$ represented by $\lambda_0^{-1}(\mathcal{C}_0){|_{H_0}}^*$ is trivial in $\textup{H}^1(H_0,R_{H_0})$.
\end{enumerate}
Note that for $0\leq i\leq n$, the set $\lambda_0^{-1}(\mathcal{C}_i){|_{H_0}}^*$ is a Borel treeset of $H_0$. Indeed, by Lemma \ref{lot11}, it is a Borel cutset with uniformly bounded boundaries which is nested and closed under complementation. This implies that it is a Borel treeset by Lemma \ref{lot1}. Now by Lemma \ref{mos6} and condition (vi), the set $\textup{H}^1(H_0,R_{H_0})$ is generated by the cutset $\bigcup_{i=1}^n\mathcal{C}_i^0$, where $\mathcal{C}_i^0=\lambda_0^{-1}(\mathcal{C}_i){|_{H_0}}^*$.

We will inductively construct a sequence of injective Borel quasi-isometries $\gamma_k:(X_{k-1},G_{k-1})\rightarrow (X_k,G_k)$ such that each $(X_k,G_k)$ is a Borel graph with uniformly bounded degrees satisfying:
\begin{enumerate}
\setcounter{enumi}{6}
    \item $G_k=T_k\ast H_k$ with $T_k$ and $H_k$ Borel subgraphs of $G_k$.
    \item $T_k$ is acyclic.
    \item $\textup{H}^1(H_k,R_{H_k})$ is generated by a cutset $\bigcup_{i=k+1}^n\mathcal{C}_i^k$, where each $\mathcal{C}_i^k$ is a Borel treeset of $H_k$ with uniformly bounded boundaries.
\end{enumerate}

Suppose that $(X_{k-1},G_{k-1})$ is defined. Then applying Lemma \ref{mos1} to the Borel graph $(X_{k-1},H_{k-1})$ and the Borel treeset $\mathcal{C}_k^{k-1}$, there exist a Borel graph $(X_k,H_k')$ with uniformly bounded degrees and an injective Borel quasi-isometry $\gamma_k:(X_{k-1},H_{k-1})\to (X_k,H_k')$ such that:
\begin{enumerate}
\setcounter{enumi}{9}
    \item $H_k'=T_k'\ast H_k$ with $T_k'$ and $H_k$ Borel subgraphs of $H_k'$.
    \item $T_k'$ is acyclic.
    \item If $\lambda_k:(X_k,H_k')\rightarrow (X_{k-1},H_{k-1})$ is a Borel quasi-isometric inverse of $\gamma_k$ with $\lambda_k\circ\gamma_k=\textup{id}_{X_{k-1}}$, then every element of $Z_{H_k}$ represented by $\lambda_k^{-1}{{(\mathcal{C}_k^{k-1})}|_{H_k}}^*$ is trivial in $\textup{H}^1(H_k,R_{H_k})$.
\end{enumerate}
By assumption (ix) on $(X_{k-1},G_{k-1})$ and condition (xii), the set $\textup{H}^1(H_k,R_{H_k})$ is generated by the cutset $\bigcup_{i=k+1}^n\mathcal{C}_i^k$, where $\mathcal{C}_i^k=\lambda_k^{-1}{(\mathcal{C}_i^{k-1})|_{H_k}}^*$ is a Borel treeset of $H_k$. Now by Lemma \ref{lot5} (applying to $(X,G)=(X_{k-1},G_{k-1})$, $H=H_{k-1}$, $K=T_{k-1}$ and $\gamma=\gamma_k:(X_{k-1},H_{k-1})\to (X_k,H_k')$), the equivalence relations $E_{H_k'}$ and $E_{(\gamma_k\times \gamma_k)(T_{k-1})}$ are freely intersecting, and the map
\begin{equation*}
    \gamma_k:(X_{k-1},G_{k-1})\rightarrow (X_k, H_k'\ast(\gamma_k\times \gamma_k)(T_{k-1}))
\end{equation*}
is also a Borel quasi-isometry. Now we set
\begin{align*}
    T_k&= T_k'\ast (\gamma_k\times \gamma_k)(T_{k-1}) \textup{ and}\\
    G_k&=H_k'\ast(\gamma_k\times \gamma_k)(T_{k-1})=H_k\ast T_k.
\end{align*}
Then these satisfy conditions (vii)-(ix) as desired. Hence we obtain the sequence $\gamma_k:(X_{k-1},G_{k-1})\rightarrow (X_k,G_k)\ (k=1,...,n)$.

Now $\textup{H}^1(H_n,R_{H_n})$ is generated by the cutset $\bigcup_{i=n+1}^n\mathcal{C}_i^n$, which is empty. This implies that $\textup{H}^1(H_n,R_{H_n})=0$, and thus $H_n$ is uniformly at most one-ended by Proposition \ref{intr10} (ii). Then $(Y,G')=(X_n,G_n)$, $T=T_n$ and $H=H_n$ satisfy conditions (i)-(iii), and
\begin{equation*}
    \gamma=\gamma_n\circ\cdots\circ\gamma_1\circ\gamma_0:(X,G)\to (Y,G')
\end{equation*}
is an injective Borel quasi-isometry.
\end{proof}

\begin{rmk}\label{mos2}
We remark about a feature of the above proof compared to Dunwoody's proof of Theorem \ref{intr4} (see also \cite[IV, Theorem 7.5]{DD}). One of the differences is that they provide a concrete construction of a single treeset separating all ends of the group (which is invariant under the group action). A similar thing is possible in our setting. Indeed, by the argument in the proof of \cite[Theorem 4.3]{Jar}, for any locally finite Borel graph $(X,G)$ and any integer $n\geq 1$, there exists a Borel treeset of $G$ with edge boundaries of size at most $n$ which generates the Boolean algebra on $X$ generated by all such cuts. However since we are concerned with the metric (not the size) of the boundaries of cuts, this machinery is not fit for our purpose. Therefore, instead of taking a single nice treeset, we divide the cutset into finitely many treesets and decompose the Borel graph several times. This might be impossible for general locally finite Borel graphs, and it is one of the reasons why we restrict our attention to the case of uniformly bounded degrees.
\end{rmk}

\subsection{Proof of Theorem \ref{intr2}} \label{smos2}

We recall the statement of Theorem \ref{intr2}:

\begin{thm}\label{mos4}
Let $R$ be a non-zero commutative ring and $(X,G)$ a Borel graph with uniformly bounded degrees. Then the following conditions are equivalent:
\begin{enumerate}
    \item There exists a Borel acyclic graph on $X$ Lipschitz equivalent to $G$.
    \item $\textup{cd}_R(G)\leq 1$.
\end{enumerate}
\end{thm}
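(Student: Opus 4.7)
For (i) $\Rightarrow$ (ii), if $F$ is a Borel acyclic graph on $X$ Lipschitz equivalent to $G$, then Remark \ref{pre1} gives $R_G = R_F$, and Lemma \ref{bk4} yields $\textup{cd}_R(G) = \textup{cd}_R(F) \leq 1$. The content lies in the converse.

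For (ii) $\Rightarrow$ (i), the plan is to apply Theorem \ref{mos3} and then use the cohomological machinery of Section \ref{sbk} to show that the ``uniformly at most one-ended'' piece of the decomposition collapses. Assuming $\textup{cd}_R(G)\leq 1$, Lemma \ref{bk8} says that $\textup{H}^1(G,R_G)$ is finitely generated, so Theorem \ref{mos3} produces an injective Borel quasi-isometry $\gamma:(X,G)\to(Y,G')$ with $G'=T\ast H$, where $T$ is Borel acyclic and $H$ is Borel uniformly at most one-ended. Lemma \ref{bk9} transports the cohomological dimension: $\textup{cd}_R(G')\leq 1$; and Lemma \ref{lot2}, applied to $G'=T\ast H$, then gives $\textup{cd}_R(H)\leq 1$. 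Since $H$ is uniformly at most one-ended, Proposition \ref{intr10}(ii) gives $\textup{H}^1(H,R_H)=0$, so Lemma \ref{bk6} forces $\textup{cd}_R(H)=0$, and Lemma \ref{bk5} shows that $E_H$ is uniformly finite. I next pick a Borel spanning forest $F$ of $H$ by the Lusin-Novikov uniformization theorem (each finite $E_H$-class has only finitely many spanning trees), and observe that $T\cup F$ is a Borel acyclic graph on $Y$ Lipschitz equivalent to $G'$: acyclicity follows because $E_T$ and $E_F=E_H$ are freely intersecting (a simple cycle splits into maximal $T$- and $F$-arcs, alternating cycles contradict free intersection, and monochromatic ones contradict acyclicity of $T$ or $F$), while Lipschitz equivalence holds since $F$ spans every uniformly finite $E_H$-class.

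The remaining task, and the main obstacle, is to transfer $T\cup F$ from $Y$ to a Borel acyclic graph on $X$. Fixing a Borel linear order $<$ on the standard Borel space $Y$, I will perform a level-by-level BFS from $\gamma(X)$ in $T\cup F$: inductively, each $y\in Y$ at $(T\cup F)$-distance $k\geq 1$ from $\gamma(X)$ is assigned the same $X$-vertex as its $<$-smallest $(T\cup F)$-neighbor at distance $k-1$. The BFS terminates in boundedly many levels because $\gamma(X)$ is quasi-dense in $(Y,G')$ and $T\cup F$ is Lipschitz equivalent to $G'$, producing a Borel map $\lambda:Y\to X$ with $\lambda\circ\gamma=\textup{id}_X$, uniformly bounded-to-one, and with fibers $\lambda^{-1}(x)$ that are $(T\cup F)$-connected by construction. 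Setting
\[
    F_X=\left\{(\lambda(y_1),\lambda(y_2))\in X\times X \:\middle|\: (y_1,y_2)\in T\cup F,\ \lambda(y_1)\neq \lambda(y_2)\right\},
\]
we get a Borel graph that is exactly the quotient of the Borel forest $T\cup F$ along the connected partition $\{\lambda^{-1}(x)\}_{x\in X}$. Contracting a forest along connected classes still yields a forest (by a standard edge-vertex count inside each $E_{G'}$-class), so $F_X$ is Borel acyclic, and Lipschitz equivalence of $F_X$ with $G$ follows by composing the quasi-isometries $\gamma$ and $\lambda$ with the Lipschitz equivalence of $T\cup F$ and $G'$.

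The connectedness of the $\lambda$-fibers is indispensable: without it, collapsing even a ``Y-shape'' in $T\cup F$ onto its three leaves produces a triangle in $F_X$, so the BFS-with-tie-breaking step is the delicate piece that makes the whole argument go through.
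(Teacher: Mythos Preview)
Your proof is correct and follows essentially the same route as the paper: both directions are handled identically up to and including the conclusion that $E_H$ is uniformly finite, and the remaining ``transfer'' step is the same idea of contracting connected fibers of a Borel retraction $\lambda:Y\to X$ in the forest $T\cup F$ (the paper's $T'$). The only real difference is in how $\lambda$ is built: the paper takes a Borel linear order on $X$ and sets $\lambda(y)$ to be the $\leq_{\mathrm B}$-least $x$ with $d_{T'}(y,\gamma(x))=d_{T'}(y,\gamma(X))$, then observes that any $y'$ on the $T'$-geodesic from $y$ to $\gamma(\lambda(y))$ has $\lambda(y')=\lambda(y)$, so fibers are convex subtrees; you instead take a Borel linear order on $Y$ and run a BFS from $\gamma(X)$, which makes the fibers the subtrees of the BFS forest. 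Both constructions yield connected fibers, which is the point, and the quotient graph on $X$ is then defined and analyzed identically. One small remark: your ``edge--vertex count'' justification for acyclicity of the quotient is only literally valid on finite components, though the underlying fact (contracting disjoint subtrees of a forest yields a forest) is of course standard; the paper likewise asserts this without further argument.
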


\begin{proof}
Implication (i)$\Rightarrow$(ii) follows from Lemma \ref{bk4}. To show the converse, we assume $\textup{cd}_R(G)\leq 1$. Then $\textup{H}^1(G,R_G)$ is finitely generated as a right $R_G$-module by Lemma \ref{bk8}. By Theorem \ref{mos3}, there exist a Borel graph $(Y,G')$ with uniformly bounded degrees and an injective Borel quasi-isometry $\gamma:(X,G)\to (Y,G')$ such that:
\begin{itemize}
    \item $G'=T_1\ast H$ with $T_1$ and $H$ Borel subgraphs of $G'$.
    \item $T_1$ is acyclic.
    \item $\textup{H}^1(H,R_H)=0$.
\end{itemize}
Since a Borel quasi-isometry preserves the cohomological dimension by Lemma \ref{bk9}, we have $\textup{cd}_R(G')\leq 1$, which implies $\textup{cd}_R(H)\leq 1$ by Lemma \ref{lot2}. Then the conditions $\textup{cd}_R(H)\leq 1$ and $\textup{H}^1(H,R_H)=0$ imply that $\textup{cd}_R(H)=0$ by Lemma \ref{bk6}, and thus $E_H$ is uniformly finite by Lemma \ref{bk5}. Hence $H$ can be replaced by an Borel acyclic graph $T_2$ on $Y$ Lipschitz equivalently. Then the Borel graph $T'=T_1\ast T_2$ is acycilc and Lipschitz equivalent to $G'$. Note that $\gamma:(X,G)\to (Y,T')$ is also a Borel quasi-isometry.

Now we define a Borel quasi-isometric inverse $\lambda:(Y,T')\to (X,G)$ of $\gamma$ as follows: Let $\leq_\textup{B}$ be a Borel linear order on $X$. For $y\in Y$, let $\lambda(y)$ be the $\leq_\textup{B}$-minimal element of the set
\begin{equation*}
    \{x\in X \mid d_{T'}(y,\gamma(x))=d_{T'}(y,\gamma(X))\}.
\end{equation*}
Then for $(y,y')\in E_{T'}$, if $d_{T'}(y,\gamma(\lambda(y)))=d_{T'}(y,y')+d_{T'}(y',\gamma(\lambda(y)))$, then we have $\lambda(y)=\lambda(y')$. This implies that $T'|_{\lambda^{-1}(x)}$ is a subtree of $T'$ for every $x\in X$. Now we set
\begin{equation*}
    T=\left\{(x,x')\in E_G \setminus\Delta_X\:\middle|\:\mleft(\lambda^{-1}(x)\times\lambda^{-1}(x')\mright) \cap T'\neq\varnothing\right\}.
\end{equation*}
Note that $\lambda:(Y,T')\to (X,T)$ is the map contracting every subtree $T'|_{\lambda^{-1}(x)}$ to the point $x$. In particular, $T$ is acyclic. Moreover $\lambda:(Y,T')\to (X,T)$ is a quasi-isometry since all $\lambda^{-1}(x)$ are uniformly bounded. Then
\begin{equation*}
    \textup{id}_X=\lambda\circ\gamma:(X,G)\to (Y,T')\to (X,T)
\end{equation*}
is also a quasi-isometry, which implies that $G$ and $T$ are Lipschitz equivalent.
\end{proof}

\section*{Acknowledgments} 
I would like to thank my advisor, Professor Yoshikata Kida, for his continuous mentorship throughout my master’s and Ph.D. studies. This thesis was completed thanks to his helpful comments and suggestions. I am also grateful to Professor Anush Tserunyan for her warm hospitality during my visit to McGill University and for many valuable discussions. This work was supported by JSPS KAKENHI Grant Number 23KJ0653, and the WINGS-FMSP program in the University of Tokyo.

\footnotesize
\textsc{Graduate School of Mathematical Sciences, the University of Tokyo, 3-8-1 Komaba, Tokyo 153-8914, Japan.}\par
E-mail address: \texttt{ishikura8000@g.ecc.u-tokyo.ac.jp}

\end{document}